\documentclass[12pt]{amsart}
\usepackage{amssymb, enumerate, mdwlist}

\textwidth 150mm
\textheight 220mm
\evensidemargin\paperwidth
\advance\evensidemargin-\textwidth
\oddsidemargin.5\evensidemargin
\advance\oddsidemargin-1in
\evensidemargin\oddsidemargin

\topmargin\paperheight
\advance\topmargin-\textheight
\topmargin.5\topmargin
\advance\topmargin-1in

\theoremstyle{plain}
\newtheorem{theorem}{Theorem}[section]
\theoremstyle{plain}
\newtheorem{proposition}[theorem]{Proposition}
\theoremstyle{plain}
\newtheorem{lemma}[theorem]{Lemma}
\theoremstyle{plain}
\newtheorem{corollary}[theorem]{Corollary}
\theoremstyle{plain}
\newtheorem{problem}[theorem]{Problem}
\theoremstyle{plain}
\newtheorem{definition}[theorem]{Definition}
\theoremstyle{plain}

\theoremstyle{remark}
\newtheorem{remark}[theorem]{Remark}
\theoremstyle{remark}

\theoremstyle{remark}

\title
[Expander and local reflexivity]
{A generalization of expander graphs and local reflexivity of uniform Roe algebras}
\author{Hiroki Sako}
\thanks{The author is a Research Fellow of the Japan Society for the Promotion of Science (PD)}
\address
{Research Institute for Mathematical Sciences, Kyoto University, Kyoto 606-8502, Japan}
\email
{sako@kurims.kyoto-u.ac.jp}
\subjclass[2010]{05C99, 46L99, 51F99}

\begin{document}

\begin{abstract}
We introduce a generalization of expander graphs, which is called  a weak expander sequence.
It is proved that a uniform Roe algebra of a weak expander sequence is not locally reflexive. 
It follows that uniform Roe algebras of expander graphs are not exact.
We introduce the notion 
of a generalized box space to
discuss box spaces and expander sequences in a unified framework.
Key tools for the proof are amenable traces and measured groupoids associated with generalized box spaces.
\end{abstract}

\keywords{Expander graphs; Box space; Local reflexivity}

\maketitle

\section{Introduction}
An expander sequence is a family of finite graphs which are uniformly locally finite but highly connected. 
It has applications to computer sciences, error correcting codes, and networks.
The first explicit example of an expander sequence was constructed by Margulis \cite{MargulisExpander}. It was constructed from a residually finite group with relative property (T).

Expander graphs give important examples in coarse geometry. Coarse geometry is a study of `large scale uniform structure' of a space.
We study features which do not depend on the local structure.
The most fundamental properties for coarse spaces are property A defined by Yu \cite[Definition 2.1]{Yu:CoarseHilbert} and coarse embeddability into a Hilbert space. 
Yu dealt with these two properties in the study of the coarse Baum--Connes conjecture. 
The conjecture states that the geometric K-theory of a metric space and the analytic K-theory are isomorphic. 
Property A implies coarse embeddability and
coarse embeddability implies the conjecture.

An expander sequence does not coarsely embed into a Hilbert space, since its components are highly connected.
In this paper, we introduce a generalization of expander sequence, which is called a sequence of weak expander spaces.
It is proved 
that weak expander spaces do not have property A
(Corollary \ref{CorollaryNotA}).
This means that connectivity of the spaces is high enough to negate property A.

We often analyze a uniformly locally finite coarse space $X$ by its uniform Roe algebra. 
The algebra is a C$^*$-algebra and can be regarded as a natural linear representation of the space $X$.
Property A is equivalent to nuclearity of the uniform Roe algebra $C^*_\mathrm{u}(X)$ (Skandalis, Tu, and Yu \cite[Theorem 5.3]{SkandalisTuYu}).
Nuclearity of C$^*$-algebras can be interpreted as a finite dimensional approximation property 
(Choi--Effros \cite[Theorem 3.1]{ChoiEffrosNuclearCPAP}, Kirchberg \cite{KirchbergNuclear}). 
In this paper, we deal with another approximation property, called local reflexivity (\cite[Section 5]{EffrosHaagerup}). 
It is much weaker than nuclearity.
Because a weak expander sequence does not have property A,
a uniform Roe algebra of a weak expander sequence is not nuclear. 
The goal of this paper is to show much stronger negation.

\begin{theorem}
Let $X = \bigsqcup_{m = 1}^\infty X_m$ be a sequence of weak expander spaces.
Then the uniform Roe algebra $C^*_\mathrm{u}(X)$ is not locally reflexive.
\end{theorem}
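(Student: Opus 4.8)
The plan is to assume that $A := C^*_\mathrm{u}(X)$ is locally reflexive and to contradict this by playing the Effros--Haagerup approximation against the amenable trace and the measured groupoid attached to the generalized box space $X$.

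\emph{The amenable trace.} First I would build a canonical trace on $A$. Since the components $X_m$ are uniformly separated, every finite-propagation operator is eventually block diagonal, so compression to the $X_m$-block defines unital completely positive maps $\varphi_m \colon A \to B(\ell^2(X_m))$ that are asymptotically multiplicative along any free ultrafilter $\omega$. Setting $\tau(a) = \lim_{m\to\omega} |X_m|^{-1}\,\mathrm{Tr}(\varphi_m(a))$ yields a tracial state, and the family $(\varphi_m)$ exhibits $\tau$ as an amenable trace. Equivalently, $\tau$ is the canonical trace carried by the measured groupoid $\mathcal G$ of $X$, with $M := \pi_\tau(A)''$ realized inside the associated groupoid von Neumann algebra.

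\emph{The weak-expander witness.} The weak-expander hypothesis is a spectral-gap condition on the averaging projections $p_m \in B(\ell^2(X_m))$ onto the (approximately) invariant vectors. It produces a projection $p$ --- equal to $f(\Delta)$ for the propagation-one Laplacian $\Delta \in A$ and a fixed $f$ with $f(0)=1$ and $f|_{[\epsilon,\infty)}=0$ when the gap is uniform, and living in $A^{**}$ in general --- which is a nonzero ghost projection with $\tau(p) = \lim_{m\to\omega}|X_m|^{-1}\,\mathrm{rank}(p_m) = 0$, so that $\pi_\tau(p)=0$. The existence of this non-compact ghost already records the failure of property A (Corollary \ref{CorollaryNotA}); the goal is to upgrade it to a failure of local reflexivity.

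\emph{The contradiction, and the obstacle.} Assuming local reflexivity, I would apply the Effros--Haagerup criterion to the finite-dimensional operator system $E \subseteq A^{**}$ generated by $\Delta$, $p$, and the central projection implementing $\pi_\tau$, obtaining u.c.p. maps $\psi_\lambda \colon E \to A$ that converge to the inclusion $E \hookrightarrow A^{**}$ in the point-$\sigma$-weak topology. Testing against the normal extension of $\tau$ and composing with the microstates $\varphi_m$, these maps would have to reproduce the spectral gap of $\Delta$ while respecting the normalized traces; since $p$ is norm-one yet $\tau$-null, this forces approximately $\mathcal G$-invariant unit vectors of vanishing trace-weight in $L^2(M,\tau)$, which is exactly what the spectral gap --- equivalently, strong ergodicity together with non-amenability of $\mathcal G$ --- forbids. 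I expect the main obstacle to be precisely this last conversion: point-$\sigma$-weak convergence of the $\psi_\lambda$ is a weak form of approximation, and promoting it to genuine $L^2$-almost-invariance estimates strong enough to invoke the gap is the crux, and is where the measured-groupoid formalism rather than the purely C$^*$-algebraic picture must do the work. Finally, since every exact C$^*$-algebra is locally reflexive (Kirchberg), the resulting failure of local reflexivity also yields the non-exactness recorded in the abstract.
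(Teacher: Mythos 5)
Your first step---the amenable trace obtained from the block compressions $\varphi_m$ and an ultrafilter limit of the normalized traces---matches the paper's construction exactly. After that, however, the proposal goes off the rails in a way that cannot be repaired along the lines you sketch. The ``weak-expander witness'' you propose, a spectral gap for a propagation-one Laplacian $\Delta$ and the resulting ghost projection $p=f(\Delta)$ with $\tau(p)=0$, is a property of genuine expander sequences, not of weak expander sequences. The weak expander condition (Definition \ref{DefinitionWeakExpander}) only demands expansion $\sharp(T[Y])>(1+c)\sharp(Y)$ for sets $Y$ of \emph{bounded} diameter (i.e.\ $\widetilde T$-bounded sets), not for all sets of size at most $\sharp(X_m)/2$; this is strictly weaker than the Cheeger-type isoperimetric inequality that is equivalent to a uniform spectral gap. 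The paper's own Subsection \ref{SubsectionExpander} points out that the Arzhantseva--Guentner--\v{S}pakula box space $\square F_2$ is a weak expander sequence that coarsely embeds into a Hilbert space; such a space admits no uniform spectral gap and no non-compact ghost projection of the kind you describe. So the object on which your entire contradiction rests does not exist in the generality of the theorem.

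The second gap is the one you yourself flag as ``the crux'': converting the point-$\sigma$-weak approximation from local reflexivity into a quantitative $L^2$-invariance statement. The paper resolves this by a completely different mechanism, and it is worth seeing why. Local reflexivity is used exactly once, in Proposition \ref{PropositionQuotient}: it makes the extension $0\to\mathrm{Ker}(\pi_\theta)\to B\to\pi_\theta(B)\to 0$ locally split, which pushes the amenable trace $\theta$ down to an amenable trace $\tau$ on the quotient $\pi_\theta(B)\cong C^*_{\mathrm{red}}(\Gamma,\nu,\mu)$, the reduced algebra of a second-countable \'etale groupoid built from a labelled controlled set (Theorem \ref{TheoremGroupoid}). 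Amenability of $\tau$ then yields a hypertrace $\tau\circ\Psi$ on $\mathbb{B}(L^2(\Gamma,\mu\circ\nu))$, and Popa's Day-convexity argument converts this into honest $\epsilon$-F\o lner sets $\Lambda\subseteq\Gamma$ for the partial translations $\widetilde{\psi_i}$; because $\mu$ is a weak-$*$ limit of the normalized counting measures and $\Lambda$ may be taken compact open, $\Lambda$ pulls back under $\Phi^*$ to a controlled set $F$ and to $F$-bounded subsets $Y\subseteq X_{m}$ with $\sharp(T[Y])<(1+\epsilon)\sharp(Y)$, contradicting weak expansion directly---no spectral gap, no ghost projection, and no strong ergodicity are invoked. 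Your proposal neither supplies this quotient step (which is where local reflexivity actually enters) nor a substitute for it, and the spectral-gap endgame you propose is unavailable for weak expanders, so the argument as written does not close.
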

Since exactness implies local reflexivity \cite{KirchbergExactUHF}, we have the following corollary.
\begin{corollary}
A uniform Roe algebra 
of a sequence of expander graphs is not exact.
\end{corollary}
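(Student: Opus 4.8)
The plan is to deduce the corollary directly from the (unnumbered) main Theorem above, together with the implication \emph{exactness $\Rightarrow$ local reflexivity} quoted from \cite{KirchbergExactUHF}. Thus the argument splits into two steps: first, recognizing an expander sequence as a special case of a sequence of weak expander spaces; and second, running the chain of implications backwards through the contrapositive.

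For the first step I would unwind the definition of a weak expander space and check that every sequence of expander graphs meets it. Since the weak expander condition was introduced precisely as a weakening of the classical expander condition, this should amount to observing that the uniform lower bound on the spectral gap (equivalently, on the isoperimetric constant) enjoyed by an expander sequence forces the weaker quantitative connectivity demanded of a weak expander sequence. In other words, writing $X = \bigsqcup_{m=1}^\infty X_m$ for a sequence of expander graphs, one verifies that $X$ is a sequence of weak expander spaces. I expect this verification to be essentially definitional, with no analytic content beyond the standard Cheeger-type inequality for expanders.

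Granting the containment, the main Theorem applies to $X$ and yields that $C^*_\mathrm{u}(X)$ is not locally reflexive. The conclusion is then immediate: by \cite{KirchbergExactUHF} every exact C$^*$-algebra is locally reflexive, so contrapositively a C$^*$-algebra that fails local reflexivity cannot be exact. Hence $C^*_\mathrm{u}(X)$ is not exact. The only point that requires any care — and so the main (if modest) obstacle — is the first step: the whole corollary hinges on the inclusion of expander sequences among weak expander sequences, so the definitions must be matched up explicitly rather than merely asserted. Once that containment is recorded, the remainder of the proof is a formal two-line deduction.
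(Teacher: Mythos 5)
Your proposal is correct and follows the paper's own route exactly: Subsection 2.4 of the paper verifies that an expander sequence is a sequence of weak expander spaces (using condition (3) to ensure $\sharp(Y)\le\sharp(X_m)/2$ for diameter-bounded $Y$ and large $m$, then invoking the expansion condition (4)), after which the corollary follows from the main theorem and Kirchberg's implication exactness $\Rightarrow$ local reflexivity. The only cosmetic difference is that no Cheeger-type inequality is needed, since the paper's definition of expander is already stated in isoperimetric form.
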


In Section \ref{SectionGenBox}, we prepare 
the notion of a generalized box space,
which is a special kind of a coarse space.
In Section \ref{SectionLocalReflexivity}, we review the definition of a uniform Roe algebras and local reflexivity of C$^*$-algebras.
In Section \ref{SectionGroupoid}, 
we construct a topological groupoid associated to a coarse space. The groupoid is different from that in \cite{SkandalisTuYu}. Its topology is generated by countably many compact and open subsets.
In Section \ref{SectionProof}, it is proved that the groupoid has an invariant measure, if the space is a generalized box space.
If the uniform Roe algebra is locally reflexive, the measured groupoid has a F\o lner property. 
This is a key for the proof of the main theorem. 
In the last section, we make comments on uniform local amenability (ULA) defined by 
Brodzki, Niblo, \v{S}pakula, Willett, and Wright \cite[Definition 2.2]{BNSWW}.
Definition of weak expander sequence is related to ULA.

\section{Generalized box space and weak expander sequence}
\label{SectionGenBox}
\subsection{Coarse space}
We prepare several notations related to coarse geometry.
See Roe's lecture note \cite[Chapter 2]{RoeLectureNote} for details.
Let $X$ be a set. 
For subsets $T, T_1, T_2 \subseteq X^2$, we define the inverse $T^{-1}$ and the product $T_1 \circ T_2$
as follows:
\begin{eqnarray*}
T^{-1} &=& \{(x, y) \in X^2 \ |\ (y,x) \in T\},\\
T_1 \circ T_2 &=& \{ (x, y) \in X^2 \ 
|\ \textrm{there exists\ } z \in X \textrm{\ such that\ }
(x, z) \in T_1, (z, y) \in T_2 \}.
\end{eqnarray*}
Denote by $T^{\circ n}$ the $n$-th power 
$T \circ T \circ \cdots \circ T$.
For a subset $Y \subseteq X$ and $T \subseteq X^2$,
let $T[Y]$ be a  set defined by
\begin{eqnarray*}
T[Y] = \{x \in X\ | \ \textrm{there exists\ } y \in Y \textrm{\ such that\ } (x, y) \in T \}.
\end{eqnarray*}
For a one-point set $\{x\}$, we simply write $T[x] = T[\{ x \}]$. 
A subset $F \subseteq X$ is called a $T$-bounded set if there exists $x \in X$ such that $F \subseteq T[x]$.

\begin{definition}
[Definition 2.3 in \cite{RoeLectureNote}]
Let $X$ be a set and let
$\mathcal{C}$ be a family of subsets of $X^2$.
The pair $(X, \mathcal{C})$ is said to be a coarse space if it satisfies the following:
\begin{itemize}
\item
The diagonal subset $\Delta_X \subseteq X^2$ is an element of $\mathcal{C}$.
\item
If $T_1 \subseteq T_2$ and $T_2 \in \mathcal{C}$, then $T_1 \in \mathcal{C}$.
\item
If $T \in \mathcal{C}$, then $T^{-1} \in \mathcal{C}$.
\item
If $T_1, T_2 \in \mathcal{C}$, then $T_1 \circ T_2 \in \mathcal{C}$.
\item
If $T_1, T_2 \in \mathcal{C}$, then $T_1 \cup T_2 \in \mathcal{C}$.
\end{itemize}
\end{definition}
Two elements $x, y \in X$ are said to be connected if $\{(x, y)\} \in \mathcal{C}$. 
Sometimes, a coarse space is assumed to be connected.
In this paper, connectivity is not required.
For the coarse space $(X, \mathcal{C})$, elements of $\mathcal{C}$ are called controlled sets (or entourages).

A metric space $(X, d)$ is naturally equipped with a coarse structure $\mathcal{C}$ defined by
\begin{eqnarray*}
\mathcal{C} = \{ T \subseteq X^2 \ |\ d \mathrm{\ is\ bounded\ on\ } T \}.
\end{eqnarray*}
Other metrics on $X$ can define the same coarse structure.
In coarse geometry, we focus on features which only depends on the coarse structure $\mathcal{C}$.

\begin{definition}
A coarse space $(X, \mathcal{C})$ is said to be uniformly locally finite
if every controlled set $T \in \mathcal{C}$ satisfies 
$\sup_{x \in X} \sharp(T[x]) < \infty$.
\end{definition}

Let us observe what uniform local finiteness means in the case of metric spaces.
For a metric space $(X, d)$, 
we use the following notations:
\begin{eqnarray*}
N_R(Y) &=& \{x \in X \ |\ d(x, Y) \le R \}, \\
\partial_R (Y) &=& N_R(Y) \setminus Y.
\end{eqnarray*}
A metric space $(X, d)$ is uniformly locally finite if and only if for every distance $R > 0$, 
all the balls $N_R(x)$ of radius $R$ are finite and the numbers of their elements are uniformly bounded, namely,
$\mathrm{sup}_{x \in X} \sharp(N_R(x)) < \infty$.
In many references, metric spaces with this property are said to have bounded geometry.

The other typical example of a coarse space arises from groups and group actions. 
Let $G$ be a discrete group. 
A set $X$ equipped with a $G$-action naturally has a coarse structure. 
For a finite subset $K \subseteq G$, define $\Gamma_K \subseteq X^2$ by $T_K = \{ (g x, x) \ |\ g \in K, x \in X\}$.
We say that a subset $T \subseteq X \times X$ is controlled if
there exists a finite subset $K \subseteq G$ such that $T \subseteq T_K$.
This coarse structure on $X$ is uniformly locally finite.

\subsection{Generalized box space}
The term `box space' was introduced in 
\cite[Definition 11.24]{RoeLectureNote}.
It is a sequence of finite quotient groups of a residually finite group $G$. 
Let $H_1 \supseteq H_2 \supseteq \cdots$ be finite index normal subgroups of $G$ 
whose intersection $\bigcap_{m=1}^\infty H_m$ is $\{1_G\}$.
A box space is the disjoint union 
$\square G = G / H_1 \bigsqcup G/H_2 \bigsqcup \cdots$ 
of finite quotient groups. The natural left translation action of $G$ gives a uniformly locally finite coarse structure on $\square G$.

Box spaces give interesting examples related to property A as follows:
\begin{itemize}
\item
Guentner observed that $\square G$ has property A if and only if $G$ is amenable (\cite[Proposition 11.39]{RoeLectureNote}).
\item
It has been already pointed out by Willett that the uniform Roe algebra $C^*_\mathrm{u}( \square G )$ is not even exact if $G$ is not amenable (see the last sentence of \cite{ArzhantsevaGuentnerSpakula}). 
This is a conclusion of the argument of \cite[Proposition 3.7.11]{OzawaBook}.
\item
For an appropriate choice of finite index normal subgroups of the free group $F_2$, 
the box space $\square F_2$ coarsely embeds into a Hilbert Space, 
although it does not have property  A (Arzhantseva, Guentner, and \v{S}pakula \cite{ArzhantsevaGuentnerSpakula}).
\item
Let $G$ be a residually finite group with Kazhdan's property (T).
The box space $\square G$ is an expander sequence
(see e.g., \cite[Section 6.1]{BekkaDelaHarpeValette}).
\end{itemize}


The construction of box spaces relies on discrete groups.
In this paper, 
we generalize the notion of a box space in the context of coarse geometry and investigate its operator algebraic feature.

\begin{definition}\label{DefinitionGBS}
A uniformly locally finite coarse space $(X, \mathcal{C})$ is called a generalized box space,
if there exists a disjoint decomposition $X = \bigsqcup_{m = 1}^\infty X_m$ satisfying
the following conditions:
\begin{itemize}
\item
Every subset $X_m$ is finite and not empty,
\item
Every controlled set $T \in \mathcal{C}$ is a subset of
$\bigsqcup_{m=1}^\infty X_m \times X_m$.
\end{itemize}
\end{definition}
The second condition means that the components $\{X_m\}$ are mutually disjoint in the sense of coarse geometry.
This notion allows us to discuss box spaces and expander sequences in the same framework.

\subsection{Weak expander spaces}
\label{SubsectionWeakExpander}
We introduce the notion of weak expander spaces.
\begin{definition}\label{DefinitionWeakExpander}
A generalized box space $X = \bigsqcup_{m = 1}^\infty X_m$ is called a sequence of
weak expander spaces if 
there exist a controlled set $T$ and 
a positive number $c$ satisfying the following condition:
for every controlled set $\widetilde{T}$,
\begin{eqnarray*}
\liminf_{m \rightarrow \infty} \left(
\inf \left\{ \frac{ \sharp(T[Y]) }{ \sharp(Y) } \ \colon\ 
\emptyset \neq Y \subseteq X_m, Y \textrm{\ is\ a } 
\widetilde{T} \textrm{-bounded\ set} \right\} \right) > 1 + c.
\end{eqnarray*}
\end{definition}

Consider the case that $T$ contains the diagonal set.
Then the set $T[Y]$ is a kind of $Y$'s neighborhood and we regard the term $\sharp(T[Y]) / \sharp (Y)$ 
as the expansion of $Y$.
The condition in Definition \ref{DefinitionWeakExpander}
means that 
the expansion of $Y$ 
is uniformly greater than $1 + c$ if
$m$ is large enough and if $Y$ is a $\widetilde{T}$-bounded set. We first observe that the cardinality of $X_m$ diverges, 
like an expander sequence.

\begin{lemma}\label{LemmaDiverge}
If a generalized box space $X = \bigsqcup_{m = 1}^\infty X_m$
is a sequence of weak expander spaces, then $\lim_{m \rightarrow \infty} \sharp(X_m) = \infty$.
\end{lemma}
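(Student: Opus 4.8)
The plan is to exploit the weak expander inequality iteratively along the powers of the fixed controlled set $T$, converting the pointwise expansion of neighbourhoods into geometric growth of the components.

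First I would record two preliminary observations. Since $(X,\mathcal{C})$ is a generalized box space, every controlled set — in particular $T$ and each power $T^{\circ j}$ — is contained in $\bigsqcup_m X_m \times X_m$; hence for $x \in X_m$ we have $T^{\circ j}[x] \subseteq X_m$, so every iterated neighbourhood stays inside a single component. Secondly, for every $x$ and every $j$ the set $T^{\circ j}[x]$ is, by definition, a $T^{\circ j}$-bounded subset of $X_m$, and $T^{\circ j}$ is itself a controlled set (a product of controlled sets, with the convention $T^{\circ 0} = \Delta_X$, so that $T^{\circ 0}[x] = \{x\}$).

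Next I would set up the iteration. Fix $j \ge 0$ and apply Definition \ref{DefinitionWeakExpander} to the controlled set $\widetilde{T} = T^{\circ j}$; this produces a threshold $m_j$ such that for all $m \ge m_j$ every nonempty $T^{\circ j}$-bounded $Y \subseteq X_m$ satisfies $\sharp(T[Y]) > (1+c)\,\sharp(Y)$. The threshold $m_j$ is uniform in $Y$, hence in the base point $x$, because it arises from the $\liminf$ over $m$ of an infimum taken over all admissible $Y$. Taking $Y = T^{\circ j}[x]$, which is nonempty and $T^{\circ j}$-bounded, gives $\sharp(T^{\circ (j+1)}[x]) > (1+c)\,\sharp(T^{\circ j}[x])$ for $m \ge m_j$. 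Setting $M_J = \max\{m_0, \dots, m_{J-1}\}$ and telescoping these inequalities from $j = 0$ up to $j = J-1$ yields $\sharp(T^{\circ J}[x]) > (1+c)^J$ for all $m \ge M_J$ and all $x \in X_m$. Since $T^{\circ J}[x] \subseteq X_m$, this forces $\sharp(X_m) > (1+c)^J$ whenever $m \ge M_J$. Given any target $L$, choosing $J$ with $(1+c)^J > L$ then shows $\sharp(X_m) > L$ for all large $m$, which is the assertion.

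The step demanding the most care is the quantifier bookkeeping in the telescoping argument. One must resist reading the chain as asserting $\sharp(T^{\circ j}[x]) > (1+c)^j$ for \emph{all} $j$ at a single fixed $m$, which would contradict finiteness of $X_m$. The inequality at level $j$ is available only for $m \ge M_j$; as $M_J$ is nondecreasing in $J$, each fixed $m$ admits only finitely many applicable steps, and the divergence is extracted by fixing the target growth $J$ first and only afterwards letting $m$ tend to infinity. Maintaining nonemptiness of $T^{\circ j}[x]$ throughout the induction — guaranteed by the bound $\sharp(T^{\circ j}[x]) > (1+c)^j \ge 1$ at each stage — is the one remaining routine point.
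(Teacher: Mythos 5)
Your proof is correct and follows essentially the same route as the paper: iterate the expansion inequality along the chain $\{x\}, T[x], T^{\circ 2}[x], \dots$ inside a single component and conclude that $\sharp(X_m)$ must exceed any prescribed bound for large $m$. The only (harmless) difference is that the paper makes a single application of the definition with $\widetilde{T} = T^{\circ n}$ (after assuming $\Delta_X \subseteq T$, so all iterates are $T^{\circ n}$-bounded) and uses only the integer growth $\sharp(T^{\circ n}[x]) > n$, whereas you invoke the definition once per level and retain the geometric factor $(1+c)^J$.
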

\begin{proof}
Take a controlled set $T$ and a positive number $c$ in Definition \ref{DefinitionWeakExpander}. 
We may assume that $T$ contains the diagonal set $\Delta_X$.
Let $n$ be a large natural number. If $m$ is large enough, then we have
\begin{eqnarray*}
\inf \left\{ \frac{ \sharp(T[Y]) }{ \sharp(Y) } \ \colon\ 
\emptyset \neq Y \subseteq X_m, Y \textrm{\ is\ a } 
T^{\circ n} \textrm{-bounded\ set} \right\} > 1.
\end{eqnarray*}
Take an element $x \in X_m$.
By the inequality, we have
\begin{eqnarray*}
1 
= \sharp(\{x\})
< \sharp(T[x])
< \sharp(T \circ T [x])
< \cdots
< \sharp(T^{\circ n} [x])
\end{eqnarray*}
and $n < \sharp(T^{\circ n} [x]) \le \sharp(X_m)$.
It follows that $\sharp(X_m)$ is grater than $n$ for large $m$.
\end{proof}

For a box space of a residually finite group $G$, being a weak expander sequence means non-amenability of $G$.

\begin{lemma}
Let $G$ be a finitely generated residually finite group.
Let $H_1 \supseteq H_2 \supseteq \cdots$ be a sequence of 
finite index normal subgroups of $G$ which satisfies 
$\bigcap_{m = 1}^\infty H_m = \{1_G\}$.
Then the following conditions are equivalent:
\begin{enumerate}
\item\label{LemmaBoxConditionNonAme}
The group $G$ is not amenable,
\item\label{LemmaBoxPropertyNotA}
The box space 
$\square G = \bigsqcup_{m = 1}^\infty G / H_m$ does not
have property A,
\item\label{LemmaBoxPropertyWES}
The box space $\square G$ is a sequence of weak expander spaces.
\end{enumerate}
\end{lemma}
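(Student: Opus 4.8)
The plan is to prove the cyclic chain of implications $(\ref{LemmaBoxConditionNonAme}) \Rightarrow (\ref{LemmaBoxPropertyWES}) \Rightarrow (\ref{LemmaBoxPropertyNotA}) \Rightarrow (\ref{LemmaBoxConditionNonAme})$. Two of these are essentially already available: the implication $(\ref{LemmaBoxPropertyWES}) \Rightarrow (\ref{LemmaBoxPropertyNotA})$ is immediate from Corollary \ref{CorollaryNotA}, which asserts that a sequence of weak expander spaces never has property A; and the implication $(\ref{LemmaBoxPropertyNotA}) \Rightarrow (\ref{LemmaBoxConditionNonAme})$ is the contrapositive of Guentner's theorem that $\square G$ has property A whenever $G$ is amenable (\cite[Proposition 11.39]{RoeLectureNote}). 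Thus the real content is the implication $(\ref{LemmaBoxConditionNonAme}) \Rightarrow (\ref{LemmaBoxPropertyWES})$, namely that non-amenability of $G$ forces $\square G$ to be a sequence of weak expander spaces.

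For that implication I would start from a standard isoperimetric characterization of non-amenability: if $G$ is not amenable, there exist a finite symmetric subset $K \ni 1_G$ and a constant $c > 0$ such that $\sharp(KF) \ge (1 + c)\, \sharp(F)$ for every finite nonempty $F \subseteq G$. I would fix, once and for all, the controlled set $T = T_K$ and the constant $c/2$ as the data required by Definition \ref{DefinitionWeakExpander}; note that on the component $G/H_m$ the left-translation action gives $T[Y] = KY$ for every $Y \subseteq G/H_m$. The key point is that the isoperimetric inequality holds for \emph{all} finite subsets of $G$, whereas Definition \ref{DefinitionWeakExpander} only tests $\widetilde{T}$-bounded subsets $Y$; this restriction is exactly what lets me pull the inequality back from $G$ to the finite quotients.

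Concretely, given an arbitrary controlled set $\widetilde{T} \subseteq T_L$ with $L \subseteq G$ finite, I would use the hypothesis $\bigcap_{m} H_m = \{1_G\}$ to choose $M$ so large that for all $m \ge M$ the quotient map $\pi_m \colon G \to G/H_m$ is injective on $KL$: the finitely many differences $a^{-1} b$ with $a, b \in KL$ and $a \neq b$ avoid $1_G$, so they lie outside the decreasing subgroups $H_m$ for $m$ large, and normality of $H_m$ upgrades this to injectivity of $\pi_m$ on every translate $KLg$. Then any $\widetilde{T}$-bounded set $Y \subseteq G/H_m$ lies in some $L\bar{g}$ and lifts bijectively to a set $F \subseteq Lg$ with $\pi_m(F) = Y$ and $\sharp(F) = \sharp(Y)$; since $KF \subseteq KLg$ lies in the region where $\pi_m$ is injective, $\sharp(T[Y]) = \sharp(KF) \ge (1 + c)\,\sharp(F) = (1 + c)\,\sharp(Y)$. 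Taking the infimum over such $Y$ and the $\liminf$ over $m$ yields a value at least $1 + c > 1 + c/2$, which is the required estimate, uniformly in $\widetilde{T}$.

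The step I expect to be the main obstacle---and the only place where the structure genuinely enters---is this transfer of the isoperimetric inequality to the quotients. A naive attempt fails because $G/H_m$ is finite and the full component $Y = G/H_m$ does not expand at all, as $KY = G/H_m$. The resolution is precisely the $\widetilde{T}$-bounded hypothesis together with $\bigcap_m H_m = \{1_G\}$, which guarantees that the sets being tested are genuinely small, in the sense that they inject into $G$, so that the ambient isoperimetric inequality applies verbatim. Everything else is bookkeeping with the coarse structure coming from the translation action.
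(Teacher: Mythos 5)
Your proof is correct, but it is organized differently from the paper's. The paper disposes of the equivalence (1) $\Leftrightarrow$ (2) entirely by citing Guentner's theorem and then proves (1) $\Leftrightarrow$ (3) directly, in both directions, by a F\o lner-set transfer between $G$ and its finite quotients. You keep only half of that hand-made argument: your implication (1) $\Rightarrow$ (3) rests on exactly the same mechanism the paper uses for its ``not weak expander $\Rightarrow$ amenable'' direction --- injectivity of $q_m$ on a fixed finite subset of $G$ for all large $m$, which lets $\widetilde{T}$-bounded sets and their $K$-neighbourhoods pass between $G$ and $G/H_m$ without changing cardinality --- except that you run it forwards (pushing the isoperimetric inequality $\sharp(KF)\ge(1+c)\sharp(F)$ down to the quotients) where the paper runs the contrapositive (lifting a non-expanding bounded set up to a F\o lner set in $G$); these are the same computation. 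For the remaining implication you replace the paper's direct proof that amenability of $G$ defeats the weak expander condition (by projecting $\epsilon$-F\o lner sets $F\subseteq G$ to uniformly bounded sets $q_m(F)$ with expansion below $1+\epsilon$) with the chain (3) $\Rightarrow$ (2) $\Rightarrow$ (1), where (3) $\Rightarrow$ (2) invokes Corollary \ref{CorollaryNotA}. This is logically sound --- nothing in Section \ref{SectionULA} depends on the present lemma, so the forward reference creates no circularity --- but it imports heavier machinery (property A implies ULA, via \cite{BNSWW}) than the paper's elementary counting argument, and it needs one extra remark: Corollary \ref{CorollaryNotA} is stated for uniformly locally finite \emph{metric} spaces, so you should say why the box space, defined here as a coarse space via the translation action, may be metrized compatibly before the corollary applies. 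With that sentence added, your argument is a complete and valid alternative.
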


\begin{proof}
By \cite[Proposition 11.39]{RoeLectureNote},
conditions (\ref{LemmaBoxConditionNonAme}) and
(\ref{LemmaBoxPropertyNotA}) are equivalent.
We prove the equivalence of conditions (\ref{LemmaBoxConditionNonAme}) and (\ref{LemmaBoxPropertyWES}).
Let $K$ be a symmetric generating set of $G$ containing $1_G$.
Define a controlled set $T_K$ by the action of $K$, i.e.,
$T_K = \{ (g x, x) \ 
|\ g \in K, x \in \square G \}$.
The controlled set $T_K^{\circ n}$ corresponds to the action of $K^n$, namely,
\begin{eqnarray*}
T_K^{\circ n} = \{ (g_1 g_2 \cdots g_n x, x) \ 
|\ g_1, g_2, \cdots, g_n \in K, x \in \square G \}.
\end{eqnarray*}

Suppose that $G$ is amenable.
For every controlled set $T$ of $\square G$, there exists a natural number $n$ such that $T \subseteq T_K^{\circ n}$.
For an arbitrary positive number $\epsilon$, there exists an $\epsilon$-F\o lner set $F \subseteq G$ for the action of $K^n$.
We mean by $\epsilon$-F\o lner that
$\sharp(F) < \infty$ and that 
$\sharp(K^n F) < (1 + \epsilon) \sharp (F)$.
If $m$ is large enough, then the quotient map $q_m \colon G \rightarrow G / H_m$ is injective on the subset $K^n F \subseteq G$.
For large $m$, we have
\begin{eqnarray*}
     \sharp(T[q_m(F)]) 
\le \sharp(T_K^{\circ n}[q_m(F)]) 
 =  \sharp(q_m(K^n F)) 
 <  (1 + \epsilon) \sharp (q_m(F)).
\end{eqnarray*}
This implies the inequality
\begin{eqnarray*}
\liminf_{m \rightarrow \infty} \left(
\frac{ \sharp(T[q_m(F)]) }{ \sharp(q_m(F)) } \right) < 1 + \epsilon.
\end{eqnarray*}
There exists a controlled set $\widetilde{T}$ such 
that for every $m$
the finite subset $q_m(F)$ is $\widetilde{T}$-bounded.
Indeed, there exists a natural number $l$ such that 
$F \subseteq K^l$. By the inclusion 
$q_m(F) \subseteq T_K^{\circ l} [1_{G/H_m}]$,
the subsets $q_m(F)$ are $T_K^{\circ l}$-bounded.
It follows that the box space $\square G$ is not a sequence of weak expander spaces.

Conversely, suppose that $\square G$ is not a sequence of weak expander spaces. 
For every positive number $\epsilon$, there exists a controlled set $\widetilde{T}$ satisfying the following inequality:
\begin{eqnarray*}
\liminf_{m \rightarrow \infty} \left(
\inf \left\{ \frac{ \sharp(T_K[Y]) }{ \sharp(Y) } \ \colon\ 
\emptyset \neq Y \subseteq G / H_m, Y \textrm{\ is\ a } 
\widetilde{T} \textrm{-bounded\ set} \right\} \right) 
< 1 + \epsilon.
\end{eqnarray*}
We may assume that $\widetilde{T} = T_K^{\circ n}$. 
For large enough $m$, 
the quotient map $q_m \colon G \rightarrow G / H_m$ 
is injective on 
$K^{n + 1} \subseteq G$. Take $m$ such that $q_m |_{K^{n + 1}}$ is injective and that 
\begin{eqnarray*}
\inf \left\{ \frac{ \sharp(T_K[Y]) }{ \sharp(Y) } \ \colon\ 
\emptyset \neq Y \subseteq G / H_m, Y \textrm{\ is\ a } 
T_K^{\circ n} \textrm{-bounded\ set} \right\} < 1 + \epsilon.
\end{eqnarray*}
Choose a $T_K^{\circ n}$-bounded subset $Y \subseteq G / H_m$ satisfying $\sharp(T_K[Y]) < (1 + \epsilon) \sharp(Y)$.
Replacing $Y$ with a right translation of $Y$, 
we may assume that $1_{G/H_m} \in Y$.
Since $T_K[Y]$ is $T_K^{\circ (n+1)}$-bounded,
$T_K[Y]$ is included in the image of $K^{n + 1}$. 
The inverse image $F$ of $Y$ under the injective map $q_m |_{K^{n + 1}}$ satisfies the inequality
\begin{eqnarray*}
\sharp(K F) < (1 + \epsilon) \sharp(F).
\end{eqnarray*}
It turns out that there exists an $\epsilon$-F\o lner set with respect to the generating set $K$.
It follows that $G$ is amenable.
\end{proof}

\subsection{Expander and weak expander}
\label{SubsectionExpander}
Let $\{(X_m, d)\}_{m \in \mathbb{N}}$ 
be a sequence of finite metric spaces.
The disjoint union $\bigsqcup_{m = 1}^\infty X_m$ is equipped with a coarse structure defined by
\begin{eqnarray*}
\mathcal{C} =
\left\{\left.
T \subseteq \bigsqcup X_m \times X_m \ \right| \ 
d \mathrm{\ is\ bounded\ on\ } T \right\}.
\end{eqnarray*}
Suppose that 
$\left( \bigsqcup_{m = 1}^\infty X_m, \mathcal{C} \right)$
is a uniformly locally finite coarse space.
Definition \ref{DefinitionWeakExpander}
can be rephrased in terms of the metric $d$.
The space $\bigsqcup_{m = 1}^\infty (X_m, d)$ is 
a sequence of weak expander spaces if and only if
there exist positive numbers $c$ and $R$ such that
for every positive number $S$,
\begin{eqnarray*}
\liminf_{m \rightarrow \infty} \left(
\inf \left\{ \frac{ \sharp(N_R(Y)) }
{ \sharp(Y) } \ \colon\ 
\emptyset \neq Y \subseteq X_m, \mathrm{diam}(Y) \le S
\right\} \right) > 1 + c.
\end{eqnarray*}

To get a better understanding about a weak expander sequence, take an expanding sequence of subsets $Y, N_R(Y), N_R(N_R(Y)), \cdots \subseteq X_m$. 
Let us think that 
the sequence describes how information spreads.
The numbers $\sharp(Y)$, $\sharp(N_R(Y))$, $\cdots$ indicate how many points the information reaches.
The inequality 
\begin{eqnarray*}
\inf \left\{ \sharp(N_R(Y)) / \sharp(Y)  \ \colon\ 
\emptyset \neq Y \subseteq X_m, \mathrm{diam}(Y) \le S
\right\} > 1 + c.
\end{eqnarray*}
means that the number of the points 
increases exponentially ($\ge O((1 + c)^n))$), until the diameter of the area exceeds $S$. We can make the constant $S$ bigger, by choosing a larger component of $\bigsqcup X_m$.


The notion of weak expander spaces is a generalization of expander graphs. 
We recall the definition of expander graphs. 
For a connected graph $(X, E)$ with a vertex set $X$ 
and an edge set $E$, we define a metric on $X$ by
\begin{eqnarray*}
d(x, y) = \mathrm{min} \{d \in \mathbb{N} \ 
|\ x = z_0, z_1, \cdots, z_d = y \in X, 
z_{j-1} \textrm{\ and\ } z_j \textrm{\ are\ connected}\}.
\end{eqnarray*}
For a subset $Y \subseteq X$, $N_1(Y)$ is the collection of all the vertices which are connected with $Y$ by edges.
A sequence $\{(X_m, E_m)\}_{m \in \mathbb{N}}$ 
of finite graphs is called an expander sequence, if the following conditions hold:
\begin{enumerate}
\item
The graphs $(X_m, E_m)$ are connected,
\item
There exists a natural number $D$ such that the graphs $(X_m, E_m)$ are all $D$-regular. Namely, for every $m$ and $x \in X_m$, $\sharp(\partial_1(x)) = D$,
\item\label{DefinitionExpanderCardinality}
$\lim_m \sharp (X_m) = \infty$,
\item\label{DefinitionExpanderExpansion}
There exists a constant $c > 0$ such that  
for every subset
$Y \subseteq X_m$ satisfying $\sharp(Y) \le \sharp(X_m) / 2$,
the inequality
$\sharp(N_1(Y))  > (1 + c) \sharp(Y)$ holds.
\end{enumerate} 
Fix distance $S > 0$. For a subset $Y \subseteq X_m$, if 
$\mathrm{diam} (Y) < S$ and if $m$ is large enough,
then $\sharp(Y) \le \sharp (X_m) / 2$ by condition (\ref{DefinitionExpanderCardinality}).
By condition (\ref{DefinitionExpanderExpansion}), 
an expander sequence is a sequence of weak expander spaces.

A sequence of expander graphs is characterized in two ways.
In the above description, 
we used a kind of negation of the F\o lner condition.
A sequence of weak expander spaces is a generalization in this context.
A sequence of expander graphs can also be characterized by means of Poincar{\' e} inequality.
Ostrovskii \cite[Theorem 2.4]{Ostrovskii} and Tessera \cite{TesseraPoincare} independently characterized
metric spaces which are not coarsely embeddable into 
Hilbert spaces and $L^1$-spaces.
They made use of weaker versions of Poincar{\' e} inequality.
Tessera also studied non-embeddability into uniformly convex Banach spaces and ${\rm CAT}(0)$ spaces
(\cite[Corollary 17]{TesseraPoincare}). 
Our generalization does not imply 
this kind of non-embeddability.
Indeed, the box space $\square F_2$ of Arzhantseva, Guentner, and \v{S}pakula \cite{ArzhantsevaGuentnerSpakula} is a sequence of weak expanders and coarsely embeds into a Hilbert space.

\section{Preliminary on operator algebra}
\label{SectionLocalReflexivity}

\subsection{Translation algebra and uniform Roe algebra}
Let us recall the definition of a uniform Roe algebra. 
We regard the algebra as a natural linear representation of a coarse space.
A bounded linear operator $a$ on $\ell_2 X$ is said to have finite propagation if its matrix coefficient is located on a controlled set. 
More precisely, there exists a controlled set $T_a \subseteq X \times X$ 
such that $\langle a \delta_y, \delta_x \rangle = 0, (x, y) \in (X \times X) \setminus T_a$. 
The collection of all the operators with finite propagation is called the translation algebra $\mathcal{A}^\infty(X)$. 
The uniform Roe algebra $C^*_\mathrm{u}(X)$ is the operator norm closure of $\mathcal{A}^\infty(X)$. 
A coarse geometric property of $X$ sometimes implies an operator algebraic property of $C^*_\mathrm{u}(X)$. The typical example is the following:
\begin{theorem}
[Theorem 5.3 in \cite{SkandalisTuYu}]
Let $X$ be a uniformly locally finite metric space. The space $X$ has property A if and only if $C^*_\mathrm{u}(X)$ is nuclear.
\end{theorem}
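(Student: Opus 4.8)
The plan is to pass from the metric space $X$ to its associated \emph{coarse groupoid} $G(X)$, an \'etale, locally compact, Hausdorff groupoid whose unit space is the Stone--\v{C}ech compactification $\beta X$, obtained as the union $\bigcup_T \overline{T}$ of the closures of the controlled sets inside $\beta(X \times X)$. Two dictionaries tie the statement to this object: first, that the reduced groupoid C$^*$-algebra is canonically isomorphic to the uniform Roe algebra, $C^*_\mathrm{r}(G(X)) \cong C^*_\mathrm{u}(X)$; and second, that property A of $X$ is equivalent to topological amenability of $G(X)$. Granting these, the theorem becomes the assertion that, for the groupoid $G(X)$, topological amenability is equivalent to nuclearity of its reduced C$^*$-algebra, and I would set up both correspondences first so that the remaining work is organised around the amenability--nuclearity equivalence.

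For the forward implication (property A $\Rightarrow$ nuclear) I would argue directly, without the groupoid machinery. Property A is equivalent to the existence, for every $R > 0$ and $\epsilon > 0$, of a normalized positive-definite kernel $u \colon X \times X \to \mathbb{C}$ with $u(x,x) = 1$, with support contained in some controlled set $T$ (so that, by uniform local finiteness, each row is finitely supported of uniformly bounded size), and with $|1 - u(x,y)| < \epsilon$ whenever $d(x,y) \le R$. Each such $u$ defines a Schur multiplier $\Phi_u$ on $C^*_\mathrm{u}(X)$, given by $(\Phi_u a)_{xy} = u(x,y)\, a_{xy}$, which is completely positive because $u$ is positive-definite and unital because $u(x,x) = 1$. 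Using uniform local finiteness together with the controlled support of $u$, one factors $\Phi_u$, up to arbitrarily small error, through a finite-dimensional matrix algebra $M_k(\mathbb{C})$ by partitioning $X$ into finitely many pieces on which the relevant blocks are finite. Letting $R \to \infty$ and $\epsilon \to 0$ gives $\Phi_u(a) \to a$ first for $a \in \mathcal{A}^\infty(X)$ and then, by density and contractivity, for all $a \in C^*_\mathrm{u}(X)$. This exhibits the completely positive approximation property, hence nuclearity.

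The reverse implication (nuclear $\Rightarrow$ property A) is where I expect the real difficulty, and where I would lean on the groupoid picture. The obstruction is that the completely positive approximations supplied by nuclearity carry no a priori control on propagation, so they cannot be read off as property-A kernels. I would remedy this by exploiting the canonical structure of $C^*_\mathrm{u}(X)$: the faithful conditional expectation $E \colon C^*_\mathrm{u}(X) \to \ell_\infty(X)$ onto the diagonal, together with the partial translations implementing the controlled sets. Composing the approximating maps with $E$ and averaging against the translation structure produces completely positive maps supported on controlled sets, whose associated kernels are positive-definite, normalized, and nearly constant along each controlled set; passing to a limit along an ultrafilter on the approximation indices then yields the family of kernels characterizing property A. In groupoid language this is precisely the step showing that nuclearity of $C^*_\mathrm{r}(G(X))$ forces topological amenability of $G(X)$, the converse being false for general groupoids but valid here because of the \'etale structure and the extremal disconnectedness of $\beta X$. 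The main obstacle is thus the quantitative control of propagation while extracting positive-definite kernels from the unstructured CP approximations, and I would isolate this as the technical heart of the argument.
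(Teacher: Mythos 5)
First, a point of reference: the paper does not prove this statement at all --- it is quoted as Theorem 5.3 of \cite{SkandalisTuYu} and used as a black box --- so your proposal can only be measured against the argument in the literature. Your overall plan is indeed the Skandalis--Tu--Yu strategy (the coarse groupoid $G(X)$ over $\beta X$, the identifications $C^*_\mathrm{r}(G(X)) \cong C^*_\mathrm{u}(X)$ and property A $\Leftrightarrow$ amenability of $G(X)$, and the amenability--nuclearity dictionary for \'etale groupoids), and the forward direction via Schur multipliers of normalized positive-definite kernels with controlled support is the standard argument. One concrete error there: you cannot factor $\Phi_u$ through a single finite-dimensional $M_k(\mathbb{C})$ by ``partitioning $X$ into finitely many pieces'' --- $X$ is infinite and uniformly locally finite, so every piece of a finite partition is infinite and the corresponding blocks are not finite. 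The correct factorization is through $\ell_\infty(X) \otimes M_N(\mathbb{C})$, where $N$ bounds the cardinalities of the supports of vectors $\xi_x$ with $u(x,y) = \langle \xi_x, \xi_y \rangle$; that algebra is nuclear though infinite-dimensional, and factoring the identity approximately through a nuclear algebra still yields nuclearity. This is repairable, but as written the step is false.

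The genuine gap is the reverse implication, which you correctly identify as the heart and then dispose of in one sentence. ``Composing the approximating maps with $E$ and averaging against the translation structure'' is not an argument: the completely positive approximations supplied by nuclearity are maps $C^*_\mathrm{u}(X) \rightarrow M_n \rightarrow C^*_\mathrm{u}(X)$ with no a priori relation to the coordinates of $X$, and no averaging procedure is specified that converts them into normalized positive-definite kernels supported on controlled sets. A useful sanity check that this step cannot be soft: the present paper records (Section 6 and the closing remark on Brodzki--Niblo--Wright) that it is open whether mere exactness of $C^*_\mathrm{u}(X)$ implies property A, and indeed the whole point of this paper is that local reflexivity already fails for weak expanders while the analogous positive statement is unknown. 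Any formal ``compose with the conditional expectation and average'' scheme would apply verbatim to exact or locally reflexive algebras, so your sketch omits exactly the place where the full strength of nuclearity must enter. In \cite{SkandalisTuYu} this is the implication that nuclearity of $C^*_\mathrm{r}(G(X))$ forces topological amenability of $G(X)$, i.e.\ the construction of approximately invariant probability densities on the fibres of the coarse groupoid out of the completely positive approximations; that construction (not the extremal disconnectedness of $\beta X$, which is a red herring here) is the technical content you would need to supply or cite for the proof to stand.
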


\subsection{Local reflexivity}
In this paper, we consider the following properties for C$^*$-algebras:
Nuclearity,
exactness,
and local reflexivity.
Nuclearity implies exactness. Conclusions in Kirchberg \cite{KirchbergExactUHF} show that
exactness implies local reflexivity.
They are all related to minimal tensor products 
between C$^*$-algebras. 
The following is the definition of local reflexivity.
\begin{definition}[Section 5 of Effros--Haagerup \cite{EffrosHaagerup}]
A C$^*$-algebra $B$ is said to be locally reflexive if for every finite dimensional operator system $V \subseteq B^{**}$, 
there exists a net of contractive completely positive maps $\Phi_j \colon V \rightarrow B$ 
which converges to $\mathrm{id}_V$ in the point-ultraweak topology.
\end{definition}
Instead of the definition, we only use the following features
of local reflexivity.
\begin{proposition}
[Theorem 3.2, Theorem 5.1, and Proposition 5.3 of 
\cite{EffrosHaagerup}]
\label{PropositionLocalSplit}\mbox{}
\begin{itemize}
\item
A C$^*$-subalgebra of a locally reflexive C$^*$-algebra is also locally reflexive.
\item
If $B$ is locally reflexive and $J$ is an ideal of $B$, 
then the exact sequence 
$0 \rightarrow J \rightarrow B \rightarrow B/J \rightarrow 0$ 
locally splits.
Namely, for every finite dimensional operator system 
$V \subseteq B / J$, 
there exists a unital completely positive map $\Phi \colon V \rightarrow B$ such that $\Phi(a) + J = a$, $a \in V$. 
\end{itemize}
\end{proposition}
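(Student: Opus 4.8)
The plan is to deduce both assertions from the standard machinery surrounding local reflexivity in \cite{EffrosHaagerup}, treating the two bullet points separately. For the first, let $A \subseteq B$ be a C$^*$-subalgebra with $B$ locally reflexive. The obstacle to a direct argument is that, given a finite dimensional operator system $V \subseteq A^{**} \subseteq B^{**}$, local reflexivity of $B$ only produces contractive completely positive maps $V \to B$, and there is in general no conditional expectation $B \to A$ (nor a normal one $B^{**} \to A^{**}$) with which to push the range back into $A$. I would therefore avoid the definition and use instead a tensor-product reformulation of local reflexivity (one of the equivalent forms established in \cite[Theorem 5.1]{EffrosHaagerup}): $A$ is locally reflexive precisely when, for every C$^*$-algebra $C$, the minimal tensor norm on the algebraic tensor product $A \odot C$ coincides with the norm induced by the canonical embedding into $A^{**} \otimes_{\min} C^{**}$. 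The key point is that the minimal tensor product is injective, so the inclusions $A \otimes_{\min} C \subseteq B \otimes_{\min} C$ and $A^{**} \otimes_{\min} C^{**} \subseteq B^{**} \otimes_{\min} C^{**}$ are isometric and compatible with the canonical maps. Hence the equality of the two norms on $B \odot C$, valid because $B$ is locally reflexive, restricts to the subspace $A \odot C$, and $A$ inherits local reflexivity; the only routine verification is the compatibility of the four embeddings, which is a diagram chase.

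For the second bullet point, fix a closed two-sided ideal $J$ of $B$, the quotient $\pi \colon B \to B/J$, and a finite dimensional operator system $V \subseteq B/J$. First I would exploit the special structure of the bidual of a quotient: since $J$ is an ideal, $J^{**} = z B^{**}$ for a central projection $z \in B^{**}$, so $(B/J)^{**}$ is the direct summand $(1-z) B^{**}$ and $\pi^{**}$ is compression by $1-z$. Fixing a normal state $\omega$ on $(B/J)^{**}$, the map $\rho(y) = y + \omega(y) z$ is a normal unital completely positive section of $\pi^{**}$, so its restriction $\rho|_V \colon V \to B^{**}$ is a unital completely positive lift of the inclusion $\iota_V \colon V \hookrightarrow B/J$ at the level of biduals. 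Applying local reflexivity of $B$ to $\rho|_V$ yields unital completely positive maps $\Phi_i \colon V \to B$ with $\Phi_i \to \rho|_V$ point-ultraweakly; composing with the weak$^*$-continuous map $\pi^{**}$ gives approximate lifts $\pi \Phi_i \to \iota_V$ point-ultraweakly in $B/J$.

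It remains to upgrade these approximate lifts to a single exact lift, and this is the step I expect to be the main obstacle, since a point-ultraweak limit of the $\Phi_i$ only returns a $B^{**}$-valued map. I would argue in two moves. The set of liftable maps $\{\pi\Phi \colon \Phi \in \mathrm{UCP}(V,B)\}$ is convex, so Mazur's theorem, applied in the Banach space of linear maps $V \to B/J$, identifies its point-weak and point-norm closures; since the $\pi\Phi_i$ converge to $\iota_V$ point-weakly, $\iota_V$ is in fact a point-norm limit of liftable maps. Then, using that $\pi$ is a complete quotient map, the set of liftable unital completely positive maps $V \to B/J$ is closed in the point-norm topology: given a rapidly point-norm convergent sequence of liftable maps, one builds a lift of the limit by a telescoping correction, with the required norm control coming from the complete quotient property and the finite dimensionality of $V$. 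Applying this closedness to the point-norm approximants produces the desired unital completely positive map $\Phi \colon V \to B$ with $\pi \Phi = \iota_V$, that is, $\Phi(a) + J = a$ for all $a \in V$, which is exactly \cite[Proposition 5.3]{EffrosHaagerup}.

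Thus the heredity statement rests on the injectivity of the minimal tensor product together with the tensor reformulation, while the local splitting rests on the central-summand description of $(B/J)^{**}$ for producing a bidual lift and on the point-norm closedness of liftable maps for descending it to $B$; the latter descent is the only genuinely delicate ingredient.
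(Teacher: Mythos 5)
The paper offers no proof of this proposition: it is quoted as a package of results from Effros--Haagerup (see also \cite[Chapter 9]{OzawaBook}), so there is no internal argument to compare yours against. Judged on its own, your sketch of the second bullet is essentially the standard derivation: the bidual splitting $J^{**}=zB^{**}$ with $z$ central, a normal u.c.p.\ section $\rho$ of $\pi^{**}$, local reflexivity applied to the finite dimensional operator system $\rho(V)\subseteq B^{**}$, Mazur/Hahn--Banach convexity to upgrade point-weak to point-norm approximation by liftable maps, and the point-norm closedness of liftable maps on a finite dimensional operator system. That outline is sound, modulo the routine care needed to make the final lift unital and the quasi-central approximate unit hidden inside your ``telescoping correction.''

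The first bullet, however, contains a genuine gap: the tensor characterization you invoke is mis-stated, and as written it is vacuous. The norm on $A\odot C$ induced by the canonical embedding into $A^{**}\otimes_{\min}C^{**}$ is \emph{always} the minimal norm, for every C$^*$-algebra $A$, simply by injectivity of $\otimes_{\min}$ applied to the inclusions $A\subseteq A^{**}$ and $C\subseteq C^{**}$; so the condition you state is satisfied by every C$^*$-algebra and cannot be equivalent to local reflexivity (for instance $\mathbb{B}(\ell_2)$ satisfies it but is not locally reflexive). The correct reformulation --- property $\mathrm{C}''$, shown equivalent to local reflexivity in Effros--Haagerup --- concerns the canonical map $A^{**}\odot C\rightarrow (A\otimes_{\min}C)^{**}$: local reflexivity says this map is isometric for the minimal norm on $A^{**}\odot C$. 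With that statement your heredity argument does go through as intended, using injectivity of $\otimes_{\min}$ for $A^{**}\subseteq B^{**}$ together with the fact that $A\otimes_{\min}C\subseteq B\otimes_{\min}C$ induces an isometric normal embedding $(A\otimes_{\min}C)^{**}\subseteq (B\otimes_{\min}C)^{**}$ compatible with the canonical maps; but the version you wrote down proves nothing, so this step needs to be repaired before the first bullet can be considered established.
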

For a neat description on local reflexivity, 
the reader is referred to the book \cite[Chapter 9]{OzawaBook} by Brown--Ozawa.

\subsection{Amenable trace}
\label{SubsectionAmeTr}
We recall the definition of amenable trace.
\begin{definition}\label{DefinitionAmeTr}
Let $B \subseteq \mathbb{B}(\mathcal{H})$ be a unital C$^*$-algebra. 
A tracial state $\theta$ on $B$ is said to be amenable if there exists a state $\rho$ on $\mathbb{B}(\mathcal{H})$ 
such that $\rho |_B = \theta$ and
$\rho(uau^*) = \rho(a)$ for every 
$a \in \mathbb{B}(\mathcal{H})$ and 
unitary $u \in B$.
\end{definition}
By Arveson's extension theorem, this definition does not depend on the choice of the faithful representation.
Kirchberg defined the notion of a liftable trace (\cite[Definition 3.1]{KirchbergT}) and proved that it is equivalent to amenability in the above sense 
(\cite[Proposition 3.2]{KirchbergT}).
Instead of Definition \ref{DefinitionAmeTr}, 
we use other equivalent conditions.
Let $\pi_\theta \colon B \rightarrow \mathbb{B}(\mathcal{H}_\theta)$ be the GNS-representation 
for the tracial state $\theta$. 
\begin{theorem}[Theorem 6.2.7 of \cite{OzawaBook}]
\label{TheoremAmenableTrace}
Let $\theta$ be a tracial state of a unital C$^*$-algebra $B$.
The following statements are equivalent:
\begin{enumerate}
\item
The tracial state $\theta$ is amenable,
\item\label{TheoremAmenableTraceConditionNet}
There exists a net of unital completely positive maps $\Psi_j \colon B \rightarrow \mathbb{M}(n(j))$ satisfying that
$\theta(b) = \lim \mathrm{tr} \circ \Psi_j(b)$ and $\lim \| \Psi_j(ab) - \Psi_j (a) \Psi_j (b) \|_{\mathrm{tr}, 2} = 0$,
for every $a, b \in B$,
where $\mathrm{tr}$ stands for the normalized trace of $\mathbb{M}(n(j))$,
\item
For every faithful representation $B \subseteq \mathbb{B} (\mathcal{H})$, 
there exists a unital completely positive map $\Psi \colon \mathbb{B} (\mathcal{H}) \rightarrow \pi_\theta(B)^{\prime\prime}$ 
whose restriction to $B$ is $\pi_\theta$.
\end{enumerate}
\end{theorem}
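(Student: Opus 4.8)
The plan is to prove the two equivalences $(1) \Leftrightarrow (3)$ and $(1) \Leftrightarrow (2)$ separately, treating condition $(1)$ as the hinge. Throughout write $M = \pi_\theta(B)''$ and let $\tau$ be the faithful normal tracial state on $M$ determined by $\theta$, so that $\tau \circ \pi_\theta = \theta$, and identify $\mathcal{H}_\theta$ with the standard form $L^2(M, \tau)$, with cyclic vector $\xi_\theta$ and conjugation $J$ (so that $M' = J M J$). The implication $(3) \Rightarrow (1)$ is the most transparent: given a unital completely positive $\Psi \colon \mathbb{B}(\mathcal{H}) \to M$ with $\Psi|_B = \pi_\theta$, the restriction $\Psi|_B$ is a $*$-homomorphism, so Choi's multiplicative-domain argument places $B$ in the multiplicative domain and yields $\Psi(u a u^*) = \pi_\theta(u) \Psi(a) \pi_\theta(u)^*$ for every unitary $u \in B$ and $a \in \mathbb{B}(\mathcal{H})$. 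Setting $\rho = \tau \circ \Psi$ then produces a state on $\mathbb{B}(\mathcal{H})$ with $\rho|_B = \theta$, and traciality of $\tau$ gives $\rho(u a u^*) = \rho(a)$, so $\theta$ is amenable.

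For $(1) \Rightarrow (3)$ I would reconstruct $\Psi$ from the $B$-central state $\rho$ by prescribing matrix coefficients. On the dense subspace $\pi_\theta(B)\xi_\theta \subseteq \mathcal{H}_\theta$ define a sesquilinear form by $\langle \Psi(x) \pi_\theta(a) \xi_\theta, \pi_\theta(b) \xi_\theta \rangle = \rho(b^* x a)$ for $x \in \mathbb{B}(\mathcal{H})$ and $a, b \in B$. The Cauchy--Schwarz inequality for $\rho$, together with $\rho(a^* a) = \theta(a^* a) = \| \pi_\theta(a) \xi_\theta \|^2$, bounds this form by $\|x\|$ and hence defines a contraction $\Psi(x)$; taking $x = b_0 \in B$ recovers $\Psi(b_0) = \pi_\theta(b_0)$. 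The crucial point is that $B$-centrality of $\rho$ gives the identity $\rho(y v^*) = \rho(v^* y)$ for unitaries $v \in B$, which is exactly what forces $\Psi(x)$ to commute with each $J \pi_\theta(v) J$, hence with the commutant $M' = J M J$, so that $\Psi(x) \in M$. Complete positivity is inherited from positivity of $\rho$ on matrix amplifications, and $\Psi$ is the required map.

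The equivalence $(1) \Leftrightarrow (2)$ splits likewise. For $(2) \Rightarrow (1)$ I would extend each $\Psi_j$ to a unital completely positive map $\widetilde{\Psi}_j \colon \mathbb{B}(\mathcal{H}) \to \mathbb{M}(n(j))$ by Arveson's theorem, fix a cofinal ultrafilter, and let $\rho$ be the ultralimit of the states $\mathrm{tr} \circ \widetilde{\Psi}_j$. Then $\rho|_B = \lim \mathrm{tr} \circ \Psi_j = \theta$; asymptotic multiplicativity in $\| \cdot \|_{\mathrm{tr}, 2}$ makes each $\Psi_j(u)$ asymptotically unitary, and a Cauchy--Schwarz estimate for $\widetilde{\Psi}_j$ gives $\mathrm{tr}(\widetilde{\Psi}_j(u a u^*)) - \mathrm{tr}(\widetilde{\Psi}_j(a)) \to 0$, so $\rho$ is $B$-central. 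The reverse implication $(1) \Rightarrow (2)$ is the \emph{analytic heart}. Starting from the central state $\rho$, I would first replace it by a net of normal states (density operators $h_i$) that are asymptotically invariant under conjugation by unitaries of $B$ in the predual norm; this passage from weak${}^*$ to norm approximate invariance is the Day--Namioka convexity trick applied to the convex set of normal states. Then, upgrading $\| u^* h_i u - h_i \|_1 \to 0$ to $\| u^* h_i^{1/2} u - h_i^{1/2} \|_2 \to 0$ by the Powers--St\o rmer inequality, the spectral projections of $h_i$ cut out matrix corners, and compression onto them produces the maps $\Psi_j \colon B \to \mathbb{M}(n(j))$ with $\mathrm{tr} \circ \Psi_j \to \theta$ and the required asymptotic multiplicativity.

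I expect the main obstacle to be precisely this implication $(1) \Rightarrow (2)$: manufacturing honest finite-dimensional, approximately multiplicative maps from a possibly singular invariant state. The difficulty is that $\rho$ need not be normal, so it cannot be diagonalized directly; the remedy is the two-step normalization (weak${}^*$-to-norm invariance via convexity, then $h \mapsto h^{1/2}$ via Powers--St\o rmer), which is the same mechanism underlying Connes' proof that injectivity implies hyperfiniteness. The remaining directions, and the completely positive and trace-approximation bookkeeping, are comparatively formal once this step is secured.
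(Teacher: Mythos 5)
The paper does not actually prove this statement: it is quoted verbatim as Theorem 6.2.7 of Brown--Ozawa \cite{OzawaBook}, so there is no in-paper argument to compare against, only the citation. Your outline is a correct reconstruction of the standard proof in that reference --- the multiplicative-domain trick for $(3)\Rightarrow(1)$, the sesquilinear form $\rho(b^*xa)$ together with commutation against $J\pi_\theta(B)J$ for $(1)\Rightarrow(3)$, Arveson extension plus an ultralimit for $(2)\Rightarrow(1)$, and Day's convexity argument followed by Powers--St\o rmer for $(1)\Rightarrow(2)$ --- with the only thin spot being the final step of that last implication, where the asymptotically invariant density operators must still be converted, via a level-set (F\o lner-projection) argument and compression, into honest unital completely positive maps to matrix algebras.
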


Amenability of $\theta$ passes to that on the algebra $\pi_\theta(B)$, if $B$ is locally reflexive.

\begin{proposition}\label{PropositionQuotient}
Let $\theta$ be an amenable trace of a unital C$^*$-algebra $B$. If $B$ is locally reflexive, 
then the state $\tau = \langle \cdot \xi_\theta, \xi_\theta \rangle$ is an amenable trace of $\pi_\theta(B)$.
\end{proposition}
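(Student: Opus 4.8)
The plan is to prove this by transporting the amenable-trace data for $\theta$ on $B$ down to $\pi_\theta(B)$, using local reflexivity to control the quotient. Let me set up the pieces. We have the GNS representation $\pi_\theta\colon B \to \mathbb{B}(\mathcal{H}_\theta)$ with cyclic vector $\xi_\theta$, and I want to show $\tau = \langle \cdot\, \xi_\theta, \xi_\theta\rangle$ is an amenable trace of the C$^*$-algebra $A := \pi_\theta(B)$. That $\tau$ is a tracial state on $A$ is immediate, since $\theta$ is a trace and $\xi_\theta$ is the canonical cyclic tracial vector: one checks $\tau(\pi_\theta(a)\pi_\theta(b)) = \theta(ab) = \theta(ba) = \tau(\pi_\theta(b)\pi_\theta(a))$, and $\tau$ is well-defined and positive because $\pi_\theta(B)'' $ carries the trace extending $\theta$. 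So the real content is amenability of $\tau$.

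The key observation is that $\pi_\theta$ need not be injective, so $A = \pi_\theta(B)$ is a quotient $B/J$ where $J = \ker \pi_\theta$. First I would invoke the GNS construction to identify $A \cong B/J$ as a C$^*$-algebra, with $\pi_\theta$ factoring as the quotient map $B \to B/J$ followed by this isomorphism. Then I use the second bullet of Proposition \ref{PropositionLocalSplit}: since $B$ is locally reflexive, the extension $0 \to J \to B \to B/J \to 0$ locally splits, so for every finite dimensional operator system $V \subseteq A = B/J$ there is a unital completely positive lift $\Phi\colon V \to B$ with $(\Phi(a) + J) = a$ for all $a \in V$.

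Next I would feed these local lifts into characterization (\ref{TheoremAmenableTraceConditionNet}) of Theorem \ref{TheoremAmenableTrace}. Because $\theta$ is amenable, there is a net of unital completely positive maps $\Psi_i\colon B \to \mathbb{M}(n(i))$ with $\theta(b) = \lim \mathrm{tr}\circ\Psi_i(b)$ and $\lim \|\Psi_i(ab) - \Psi_i(a)\Psi_i(b)\|_{\mathrm{tr},2} = 0$ for all $a,b \in B$. The plan is to produce, for each finite dimensional operator system $V \subseteq A$ and each tolerance, a unital completely positive map $A \supseteq V \to \mathbb{M}(n(i))$ witnessing amenability of $\tau$ by composing: given $V$, choose the local splitting $\Phi\colon V \to B$, then form $\Psi_i \circ \Phi\colon V \to \mathbb{M}(n(i))$, which is unital and completely positive as a composite of such maps. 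I would then verify the two asymptotic conditions. For the trace condition, $\mathrm{tr}\circ\Psi_i\circ\Phi(a) \to \theta(\Phi(a))$; since $\Phi(a) + J = a$ and $\tau$ is $\theta$ read on $B/J$, one gets $\theta(\Phi(a)) = \tau(a)$ provided $\theta$ annihilates $J$ — which holds because $J = \ker\pi_\theta$ lies in the trace-kernel. For the asymptotic multiplicativity, the difficulty is that $\Phi$ is only a lift on the operator system $V$, not an algebra homomorphism, so $\Phi(ab) \ne \Phi(a)\Phi(b)$ in general; here I would arrange $V$ to contain the relevant products $ab$ (enlarging $V$ as needed through the net), use that $\Phi(ab) - \Phi(a)\Phi(b) \in J$, and then exploit that elements of $J$ have $\theta$-value zero together with the near-multiplicativity of the $\Psi_i$ in $\|\cdot\|_{\mathrm{tr},2}$ to push the defect to zero.

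The main obstacle is precisely this last point: reconciling the fact that the lift $\Phi$ respects the algebra structure only modulo $J$ with the requirement that $\Psi_i \circ \Phi$ be asymptotically multiplicative in the $2$-norm. Controlling $\|\Psi_i(\Phi(ab)) - \Psi_i(\Phi(a))\Psi_i(\Phi(b))\|_{\mathrm{tr},2}$ requires both that $\Psi_i$ is asymptotically multiplicative (from amenability of $\theta$) and that $\|\Psi_i(j)\|_{\mathrm{tr},2} \to 0$ for $j \in J$ (from $\theta|_J = 0$, since $\|\Psi_i(j)\|_{\mathrm{tr},2}^2 = \mathrm{tr}(\Psi_i(j)^*\Psi_i(j))$ is controlled by $\mathrm{tr}\circ\Psi_i(j^*j) \to \theta(j^*j) = 0$). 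A clean way to organize all of this is to index the net jointly over finite subsets of $A$, finite dimensional operator systems $V$ containing them and their products, and the parameter $i$, using a diagonal argument; I expect the routine but careful bookkeeping to lie in this diagonalization rather than in any single estimate.
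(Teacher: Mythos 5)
Your argument is correct and follows essentially the same route as the paper: the paper likewise uses local reflexivity to obtain a local splitting of $0 \rightarrow \mathrm{Ker}(\pi_\theta) \rightarrow B \rightarrow \pi_\theta(B) \rightarrow 0$ and then simply cites Proposition 6.3.5 (4) of \cite{OzawaBook} for the fact that an amenable trace descends to an amenable trace on the quotient of a locally split extension. What you have written is, in effect, a proof of that cited proposition, and the bookkeeping you identify (enlarging $V$ to contain the products, using $\|\Psi_i(j)\|_{\mathrm{tr},2}\to 0$ for $j\in\mathrm{Ker}(\pi_\theta)$, and diagonalizing over finite subsets) is exactly the standard way it is carried out.
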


\begin{proof}
Since $B$ is locally reflexive, by Proposition \ref{PropositionLocalSplit}, 
the sequence 
$0 \rightarrow \mathrm{Ker}(\pi_\theta) \rightarrow B \rightarrow \pi_\theta(B) \rightarrow 0$ is a locally split extension. 
By \cite[Proposition 6.3.5 (4)]{OzawaBook}, 
$\tau$ is an amenable trace of $\pi_\theta(B)$.
\end{proof}

For a generalized box space $X = \bigsqcup_{m = 1}^\infty X_m$, 
the uniform Roe algebra $C^*_\mathrm{u} (X)$ and its unital subalgebra $B$ have amenable traces. 
Let $\Psi_m \colon C^*_\mathrm{u} (X) \rightarrow \mathbb{B}(\ell _2 X_m) = \mathbb{M}(\sharp(X_m), \mathbb{C})$ be 
the unital completely positive map by compression. 
Since $X_m$ is isolated in the coarse space $X$, $\Psi_m$ is a $*$-homomorphism. 
The composition $\theta_m = \mathrm{tr} \circ \Psi_m$ is a trace on $C^*_\mathrm{u} (X)$.
Let $\theta$ be an accumulation point of $\{\theta_m\}$ in the state space of $C^*_\mathrm{u} (X)$. 
By Theorem \ref{TheoremAmenableTrace} (\ref{TheoremAmenableTraceConditionNet}), this is an amenable trace of $C^*_\mathrm{u} (X)$ and of its unital C$^*$-subalgebra $B$.

\section{Construction of groupoid associated to label}
\label{SectionGroupoid}
Groupoid is an algebraical object equipped with partially defined associative product, a space of units, and inverse.
We often consider additional structures such as measure and topology. 
Measured groupoids are related to constructions of von Neumann algebras 
and ergodic theory of group actions.
Topological groupoids serve as fundamental tools for operator K-theory and coarse geometry.
For the axioms of groupoids, the reader is referred to Renault's book \cite[Chapter 1]{BookRenault}.

Skandalis, Tu, and Yu described a coarse space $X$ 
in terms of topological groupoid in \cite[Section 3]{SkandalisTuYu}. 
Their groupoid was constructed from Stone--\v{C}ech compactification of $X$. 
For the proof of our theorem, this groupoid is too big. 
We need separability of translation C$^*$-algebra and the groupoid must be second countable.
In this section, we first construct a C$^*$-algebra, which is associated to a finite collection of partial bijections.
We describe the algebra as a representation of a groupoid, 
whose topology is generated by countably many closed and open subsets.

Let $X$ be a uniformly locally finite coarse space.
For a controlled set $T$, there exists a label $L$ in the following sense. 
\begin{definition}
Let $T$ be a controlled set containing the diagonal subset 
$\Delta_X \subseteq T \subseteq X^2$.
Label on $T$ is a family $L = \{\Delta_X = \phi(0), \phi(1), \phi(2), \cdots, \phi(k) \}$ of subsets of $T$ satisfying
the following conditions:
\begin{itemize}
\item
$T = \Delta_X \bigcup \left(\bigcup_{i = 1}^k \phi(i) \right)$,
\item
Each $\phi(i)$ gives a bijection from a subset of $X$ to a subset of $X$. Namely,
if $(z, x), (z, y) \in \phi(i)$, then $x = y$,
if $(x, z), (y, z) \in \phi(i)$, then $x = y$.
\end{itemize}
\end{definition}
When $(x, y) \in X^2$ is an element of $\phi(i)$, 
we regard $x$ as the image of $y$ 
with respect to the map $\phi(i)$.
Then we write $x = [\phi(i)](y)$.
Let $(X, T)^{(2)} \subseteq X^2$ be the set of the pairs which are connected in the coarse structure generated by $T$.
Namely, $(X, T)^{(2)}$ is the set 
$\bigcup_{n = 1}^\infty (T \cup T^{-1})^{\circ n}$.
The set $(X, T)^{(2)}$ naturally has a groupoid structure by
\begin{itemize}
\item
range map: $\mathrm{Image}_X \colon (x, y) \mapsto x$,
\item
source map: $\mathrm{Dom}_X \colon (x, y) \mapsto y$,
\item
product: $(x, y)(y, z) = (x, z)$,
\item
inverse: $(x, y)^{-1} = (y, x)$.
\end{itemize}
We often identify the space of units $\Delta_X$ with $X$.
The first goal of this section is 
Theorem \ref{TheoremGroupoid}.
The theorem states that there exists a `good' groupoid $\Gamma$ which encodes the partial bijections $\phi(i)$.

\subsection{Translation C$^*$-algebra associated to label}
\label{SubsectionC$^*$-SubalgebraLabel}
For $i = -k, -k+1, \cdots , -1$, define $\phi(i)$ by the inverse 
$\phi(-i)^{-1}$.
Let $I$ be the index set $\{ -k, -k+1, \cdots , 0, \cdots, k \}$.
For $i \in I$, we denote by $v(i) : \ell_2 X \rightarrow \ell_2 X$ the partial isometry defined by
\begin{eqnarray*}
[v(i)](\delta_y)
=
\left\{
\begin{array}{cc}
\delta_{[\phi(i)](y)}, & \quad y \in \mathrm{Dom}_X (\phi(i)),\\
0,                         & \quad y \notin \mathrm{Dom}_X (\phi(i)).
\end{array}
\right.
\end{eqnarray*}
The operator $v(i)$ is an element of the uniform Roe algebra 
$C^*_\mathrm{u}(X) \subseteq \mathbb{B}(\ell_2 X)$.
The operators satisfy the relation $v(-i) = v(i)^*$.
Let $I^*$ be the index set $\bigsqcup_{n = 1}^\infty I^n$. For 
\begin{eqnarray*}
g = (g(1), g(2), \cdots, g(n)) \in I^n \subseteq I^*,
\end{eqnarray*}
we define a partial bijection $\phi(g)$ by the composition 
$\phi(g(1)) \circ \phi(g(2)) \circ \cdots \circ \phi(g(n))$,
restricting the domain in the obvious way.
The composition $\circ$ is identical to the product as controlled sets.

Let $E \colon \mathbb{B}(\ell_2 X) \rightarrow \ell_\infty X$ be the conditional expectation
onto $\ell_\infty X$. 
We define operators $v(g), p(g)$ in $C^*_\mathrm{u}(X)$ as follows:
\begin{itemize}
\item
a partial isometry $v(g) = v(g(1)) v(g(2)) \cdots v(g(n))$,
\item
a projection $p(g) = E(v(g))$.
\end{itemize}
Let $B = B_L$ be the unital C$^*$-subalgebra of $C^*_\mathrm{u}(X)$ 
generated by $\{ v(g) \ |\ g \in I^* \}$ and  
$\{ p(g) \ |\ g \in I^* \}$.
Let $A$ be 
the commutative C$^*$-algebra generated 
by $E(B) \subseteq \ell_\infty X$.

\begin{lemma}\label{Lemma;First}
The algebra $A$ is generated 
by at most countably many projections.
The algebra $B$ contains $A$.
The algebra $A$ is equal to $E(B)$.
\end{lemma}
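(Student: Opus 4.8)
The plan is to identify $E(B)$ explicitly: I will show that it coincides with the C$^*$-algebra generated by the countable family of projections $\{p(g) : g \in I^*\}$, and then all three assertions fall out at once. Write $D = \ell_\infty X$, and let $B_0$ be the $*$-subalgebra of $B$ spanned by all words in the generators $\{v(i) : i \in I\}$ and $\{p(g) : g \in I^*\}$; since $v(g) = v(g(1)) \cdots v(g(n))$ and $v(-i) = v(i)^*$, this is the same as the $*$-algebra generated by all the generators, so $B = \overline{B_0}$, and the index set $I^*$ is countable.

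First I would record two elementary relations. Because each $v(g)$ comes from a partial bijection, its matrix has at most one nonzero entry in each row and column, and a direct computation of diagonal coefficients gives the intertwining relation $E(v(g)\, a\, v(g)^*) = v(g)\, E(a)\, v(g)^*$ for all $a \in \mathbb{B}(\ell_2 X)$. Taking $a = v(h)$ and using $v(g) v(h) v(g)^* = v(g \frown h \frown g^*)$, where $g^* = (-g(n), \dots, -g(1))$ and $\frown$ denotes concatenation, this yields the key identity
\[
v(g)\, p(h)\, v(g)^* = p\big(g \frown h \frown g^*\big),
\]
so the family $\{p(g)\}$ is stable under conjugation by the $v(g)$. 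Second, for any $d \in D$ one has $v(g)\, d = \big(v(g)\, d\, v(g)^*\big)\, v(g)$, which lets me push diagonal factors leftward past any $v(g)$.

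Using these, I would reduce an arbitrary word by processing its letters from left to right while maintaining the form $P \cdot v(w)$, with $P$ a finite product of projections $p(\cdot)$ and $w \in I^*$: a letter $v(i)$ extends $w$ to $w \frown i$, while a letter $p(h)$ is moved across $v(w)$ by the second relation and, by the key identity, contributes the extra factor $p(w \frown h \frown w^*)$ to $P$. Hence $B_0 = \mathrm{span}\{P\, v(w)\}$. Since $E$ is contractive, $D$-bimodular, and $E(v(w)) = p(w)$, we get $E(P v(w)) = P\, p(w)$, again a product of the $p(\cdot)$'s; therefore $E(B_0) \subseteq A_0 := C^*(\{p(g) : g \in I^*\})$, and by continuity $E(B) \subseteq \overline{E(B_0)} \subseteq A_0$. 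For the reverse inclusion, note $A_0 \subseteq B$ (each $p(g)$ is a generator of $B$ and $B$ is closed) and $A_0 \subseteq D$, so $E$ restricts to the identity on $A_0$, whence $A_0 = E(A_0) \subseteq E(B)$. Thus $E(B) = A_0$. As $A_0$ is a commutative C$^*$-algebra, the algebra $A$ generated by $E(B)$ equals $A_0$; this is generated by the countably many projections $\{p(g)\}$, is contained in $B$, and equals $E(B)$, which is exactly the three claims. I expect the main obstacle to be verifying the intertwining identity $v(g)\, p(h)\, v(g)^* = p(g \frown h \frown g^*)$ cleanly — that conjugating a diagonal projection by a partial-bijection isometry keeps it within the labelled family — since the remaining steps are formal manipulations with the conditional expectation.
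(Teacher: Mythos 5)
Your proof is correct, and it takes a somewhat different route from the paper's. The paper works with the dense spanning set of alternating words $v(g_1)p(h_1)\cdots v(g_n)p(h_n)$ and computes $E$ of such a word directly: since the word's matrix is a $\{0,1\}$-matrix supported on a subset of $\phi(g)$ (with $g$ the concatenation of the $g_l$), one has $v = v(g)\,v^*v$ with $v^*v$ a diagonal projection, hence $E(v) = p(g)\,v^*v \in B$. You instead first prove the covariance relation $v(g)\,p(h)\,v(g)^* = p(g * h * g^{-1})$ and use it to put every word into the normal form $P\,v(w)$ with $P$ a finite product of the $p(\cdot)$'s, which yields the sharper identification $E(B) = C^*(\{p(g) : g\in I^*\})$. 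Your version costs the verification of the intertwining identity $E(v(g)\,a\,v(g)^*) = v(g)\,E(a)\,v(g)^*$ --- which does hold, precisely because each $v(g)$ has at most one nonzero entry in each row and each column --- but it buys an explicit countable generating set of projections for $A$ and makes it transparent that $E(B)$ is already a C$^*$-algebra, a point the paper's proof leaves slightly implicit when it passes from ``$A$ is generated by $E(B)$'' to ``$A$ equals $E(B)$.'' Both arguments ultimately rest on the same fact, namely that conjugation by the partial isometries $v(g)$ preserves the diagonal algebra $\ell_\infty X$.
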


\begin{proof}
Let $B_0$ be the linear span of the set
\begin{eqnarray*}
\{ v(g_1) p(h_1) v(g_2) p(h_2) \cdots v(g_n) p(h_n) \ | \ g_1, h_1, g_2, h_2, \cdots, g_n, h_n \in I^* \}.
\end{eqnarray*}
The set $B_0$ is closed under multiplication and involution.
The algebra $B$ is the closure of $B_0$.
Note that all the partial isometries $v(g_l)$ correspond to partial bijections of $X$
and that $p(h_l) \in \ell_\infty X$ is a characteristic function of a subset of $X$.
It turns out that the image of the conditional expectation 
$E(v(g_1) p(h_1) \cdots v(g_n) p(h_n))$
has to be a projection. 
It follows that $A$ is generated by these countably many projections.

Define $g \in I^*$ by $(g_1(1), g_1(2), \cdots, g_2(1), g_2(2), \cdots, \cdots, g_n(1), g_n(2), \cdots)$.
Define $v$ by $v(g_1) p(h_1) v(g_2) p(h_2) \cdots v(g_n) p(h_n) \in B_0$.
The matrix coefficients of $v$ are $1$
only on a subset of $\phi(g) \subseteq X \times X$. 
It follows that $v^* v \le v(g)^* v(g)$.
Since $v$ corresponds to a partial bijection on $X$,
the projection $v^* v$ is in $\ell_\infty X$.
By the equality $v = v(g) v^* v$, we have $E(v) = p(g) v^* v$.
It follows that $E(B_0)$ is a subset of $B$.
We conclude that $A$ is a subalgebra of $B$
and equal to $E(B)$.
\end{proof}

\subsection{Groupoid associated to label}
\label{SubsectionGroupoidLabel}
Let $\Omega$ be a Gelfand spectrum of the unital commutative C$^*$-algebra $A$.
We often identify $A$ and $C(\Omega)$.
The projection $p \in C(\Omega)$ is a characteristic function
of a closed and open subset $\mathrm{supp}_\Omega(p) \subseteq \Omega$. 
We identify $A p$ and $C(\mathrm{supp}_\Omega(p))$.
By Lemma \ref{Lemma;First},
the topology of $\Omega$ is generated by 
at most countably many closed and open subsets.
For $g \in I^*$, $\mathrm{Ad} (v(g))$ gives an isomorphism from $A v(g)^* v(g)$
to $A v(g) v(g)^*$.
This isomorphism defines 
a homeomorphism $\psi_g \colon \mathrm{supp}_\Omega(v(g)^* v(g)) \rightarrow \mathrm{supp}_\Omega(v(g) v(g)^*)$ by 
\begin{eqnarray*}
[v(g) a v(g)^* ] (\psi_g (\alpha)) = a (\alpha), 
\quad 
a \in A v(g)^* v(g),
\alpha \in \mathrm{supp}_\Omega(v(g)^* v(g)).
\end{eqnarray*}
We denote by $\mathrm{Dom}_\Omega(\psi_g)$,
$\mathrm{Image}_\Omega(\psi_g) \subseteq \Omega$ the domain and the image of $\psi_g$.

We define a set $\Sigma(g)$ by 
\begin{eqnarray*}
\Sigma(g) = \{ (\psi_g (\alpha), g, \alpha) \ 
|\ g \in I^*, \alpha \in \mathrm{Dom}_\Omega(\psi_g)\}
\end{eqnarray*}
and define its topology by the identification
$\Sigma(g) \cong \mathrm{Dom}_\Omega(\psi_g) \subseteq \Omega$.
We define $\Sigma$ by the disjoint union 
$\Sigma = \bigsqcup_{g \in I^*} \Sigma(g)$
and introduce its topology by the product 
of the compact Hausdorff topology and the discrete topology.
We define an relation $\sim$ 
on $\Sigma$ by the following:
two elements $(\psi_g(\alpha), g, \alpha)$ and $(\psi_h(\beta), h, \beta)$ are said to be equivalent
if $\alpha = \beta$ and $\alpha \in \mathrm{supp}_\Omega (E(v(h)^* v(g)))$.

\begin{remark}\label{RemarkPsi}
If $(\psi_g(\alpha), g, \alpha) \sim (\psi_h(\beta), h, \beta)$,
then $\psi_h(\beta)$ is equal to $\psi_g(\alpha)$.
Since the function $E(v(h)^* v(g)) \in A$ is 
a characteristic function of the set 
$\{x \in X \ | \ [\phi(g)](x) = [\phi(h)](x) \} \subseteq X$, $\mathrm{Ad} (v(g))$ and $\mathrm{Ad} (v(h))$ 
are identical on $A E(v(h)^* v(g))$. 
Since $\alpha \in \Omega$ is in 
$\mathrm{supp}_\Omega (E(v(h)^* v(g)))$, we have
$\psi_h(\beta) = \psi_h(\alpha) = \psi_g(\alpha)$.
The converse does not hold in general.
\end{remark}
\begin{lemma}
The relation $\sim$ on $\Sigma$ is an equivalence relation.
\end{lemma}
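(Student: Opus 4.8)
The plan is to verify the three defining properties of an equivalence relation separately, using throughout the description of $E(v(h)^* v(g))$ recorded in Remark \ref{RemarkPsi}: it is the characteristic function of the set
\[
S_{g,h} = \{ x \in X \ |\ x \in \mathrm{Dom}_X(\phi(g)) \cap \mathrm{Dom}_X(\phi(h)),\ [\phi(g)](x) = [\phi(h)](x) \},
\]
regarded as a projection in $A = E(B)$, so that $\alpha \in \mathrm{supp}_\Omega(E(v(h)^* v(g)))$ is the spectral incarnation of the statement that the partial bijections $\phi(g)$ and $\phi(h)$ are defined and agree at $\alpha$. Reflexivity and symmetry are immediate; the real content is transitivity, which I expect to be the main obstacle, since it is the only step in which the two conditions defining $\sim$ must be composed.

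For reflexivity, I would note that $E(v(g)^* v(g)) = v(g)^* v(g)$ already lies in $\ell_\infty X$ and is precisely the projection onto $\mathrm{Dom}_X(\phi(g))$, whose support in $\Omega$ is $\mathrm{Dom}_\Omega(\psi_g)$; since any representative $(\psi_g(\alpha), g, \alpha)$ of $\Sigma(g)$ satisfies $\alpha \in \mathrm{Dom}_\Omega(\psi_g)$, we obtain $\alpha \in \mathrm{supp}_\Omega(E(v(g)^* v(g)))$, hence $(\psi_g(\alpha), g, \alpha) \sim (\psi_g(\alpha), g, \alpha)$. For symmetry, since the conditional expectation $E$ is $*$-preserving and $E(v(h)^* v(g))$ is a self-adjoint projection, we have $E(v(g)^* v(h)) = E((v(h)^* v(g))^*) = E(v(h)^* v(g))^* = E(v(h)^* v(g))$, so the two supports coincide and the condition $\alpha \in \mathrm{supp}_\Omega(E(v(h)^* v(g)))$ is symmetric in $g$ and $h$ (the requirement $\alpha = \beta$ being visibly symmetric).

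For transitivity, suppose $(\psi_g(\alpha), g, \alpha) \sim (\psi_h(\alpha), h, \alpha)$ and $(\psi_h(\alpha), h, \alpha) \sim (\psi_k(\alpha), k, \alpha)$, so that $\alpha \in \mathrm{supp}_\Omega(E(v(h)^* v(g)))$ and $\alpha \in \mathrm{supp}_\Omega(E(v(k)^* v(h)))$. The key step is the operator inequality
\[
E(v(h)^* v(g)) \cdot E(v(k)^* v(h)) \le E(v(k)^* v(g))
\]
in $A$, which I would prove at the level of $X$: all the operators lie in $B$, so the expectations lie in $E(B) = A$ by Lemma \ref{Lemma;First}, both sides are characteristic functions, the left-hand side equals $\chi_{S_{g,h} \cap S_{h,k}}$, and the inclusion $S_{g,h} \cap S_{h,k} \subseteq S_{g,k}$ holds because agreement of $\phi(g)$ with $\phi(h)$ and of $\phi(h)$ with $\phi(k)$ at a common point forces agreement of $\phi(g)$ with $\phi(k)$ there. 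Passing to the spectrum, the product of two projections has support equal to the intersection of their supports, and domination of projections forces inclusion of supports; hence $\alpha \in \mathrm{supp}_\Omega(E(v(h)^* v(g))) \cap \mathrm{supp}_\Omega(E(v(k)^* v(h))) \subseteq \mathrm{supp}_\Omega(E(v(k)^* v(g)))$, giving $(\psi_g(\alpha), g, \alpha) \sim (\psi_k(\alpha), k, \alpha)$. The one point requiring care, and thus the main obstacle, is the legitimacy of reducing the spectral statement back to the pointwise set inclusion on $X$; the operator inequality in $A$ is exactly what accomplishes this, since the order structure and products of $A \subseteq \ell_\infty X$ transfer faithfully under Gelfand duality.
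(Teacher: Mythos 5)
Your proof is correct and follows essentially the same route as the paper: the paper also reduces everything to the operator inequality $E(v(g^\prime)^* v(h))\, E(v(h)^* v(g)) \le E(v(g^\prime)^* v(g))$ in $\ell_\infty X$ and passes to supports in $\Omega$, treating only transitivity explicitly. Your additional verification of reflexivity and symmetry and the pointwise justification of the inequality via the sets $S_{g,h}$ are just more detailed versions of what the paper leaves implicit.
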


\begin{proof}
For the proof of transitivity, suppose that 
$(\psi_g(\alpha), g, \alpha) \sim (\psi_h(\beta), h, \beta)$ and
$(\psi_h(\beta), h, \beta) \sim 
(\psi_g(\alpha^\prime), g^\prime, \alpha^\prime)$.
Then $\alpha$ is equal to $\alpha^\prime$ and included in the supports of 
$E(v(h)^* v(g))$ and 
$E(v(g^\prime)^* v(h))$.
Regarding the operators $E(v(g^\prime)^* v(h))$ and 
$E(v(h)^* v(g))$ as elements in $\ell_\infty X$, 
we have the inequality $E(v(g^\prime)^* v(h)) E(v(h)^* v(g)) \le E(v(g^\prime)^* v(g))$.
It follows that $\alpha \in \mathrm{supp}_\Omega (E(v(g^\prime)^* v(g)))$.
We have 
$(\psi_g(\alpha), g, \alpha) \sim 
(\psi_g(\alpha^\prime), g^\prime, \alpha^\prime)$.
\end{proof}

Let $\Gamma$ be the quotient topological space $\Sigma / \sim$ and $q \colon \Sigma \rightarrow \Gamma$ be the quotient map.
\begin{lemma}\label{LemmaTopologyOfGamma}
\begin{enumerate}
\item\label{Hausdorff}
The space $\Gamma$ is Hausdorff.
\item
For every $g \in I^*$, 
$q(\Sigma(g))$ is a closed and open subset of $\Gamma$.
\item
For every $g \in I^*$, 
$q |_{\Sigma(g)} \colon \Sigma(g) \rightarrow q(\Sigma(g))$ is a homeomorphism. 
\end{enumerate}
\end{lemma}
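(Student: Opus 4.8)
The plan is to exploit the totally disconnected structure of $\Omega$ together with a single computation of the saturations of ``tubes'' in $\Sigma$. By Lemma \ref{Lemma;First} the algebra $A = C(\Omega)$ is generated by countably many projections, so $\Omega$ has a basis of closed and open sets; each domain $\mathrm{Dom}_\Omega(\psi_g) = \mathrm{supp}_\Omega(v(g)^* v(g))$ is clopen, and for all $g, h \in I^*$ the \emph{agreement locus} $\mathrm{supp}_\Omega(E(v(h)^* v(g)))$ is clopen as well, being the support of a projection in $A$ (as noted in Remark \ref{RemarkPsi}, it is the spectrum picture of $\{x \in X : [\phi(g)](x) = [\phi(h)](x)\}$). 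The basic computation is this: for a clopen $V \subseteq \mathrm{Dom}_\Omega(\psi_g)$, put $W = \{(\psi_g(\alpha), g, \alpha) : \alpha \in V\} \subseteq \Sigma(g)$; unwinding the definition of $\sim$, the saturation $q^{-1}(q(W))$ meets each $\Sigma(h)$ in the subset corresponding to $V \cap \mathrm{supp}_\Omega(E(v(h)^* v(g))) \cap \mathrm{Dom}_\Omega(\psi_h)$, an intersection of clopen sets. Hence $q^{-1}(q(W))$ is clopen in $\Sigma$, so $q(W)$ is clopen in $\Gamma$; in particular $q$ sends basic clopen tubes to clopen sets and is therefore an open map. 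Taking $V = \mathrm{Dom}_\Omega(\psi_g)$ yields statement (2): $q(\Sigma(g))$ is closed and open.

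For statement (3) I would first observe that $\sim$ restricts trivially to $\Sigma(g)$: since $v(g)^* v(g)$ is a diagonal projection, i.e.\ $E(v(g)^* v(g)) = v(g)^* v(g)$, whose $\Omega$-support is all of $\mathrm{Dom}_\Omega(\psi_g)$, the relation $(\psi_g(\alpha), g, \alpha) \sim (\psi_g(\alpha'), g, \alpha')$ forces only $\alpha = \alpha'$. Thus $q|_{\Sigma(g)}$ is injective; it is continuous as a restriction of $q$, and it is open onto its image by the tube computation of the previous paragraph. A continuous, open, injective map is a homeomorphism onto its image.

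Statement (1) is the delicate part. Given two distinct points of $\Gamma$, lift them to inequivalent representatives $(\psi_g(\alpha), g, \alpha)$ and $(\psi_h(\beta), h, \beta)$. By definition of $\sim$, inequivalence means either $\alpha \neq \beta$, or else $\alpha = \beta$ but $\alpha \notin \mathrm{supp}_\Omega(E(v(h)^* v(g)))$. In the first case I would separate the sources: choose disjoint clopen sets $V \ni \alpha$ and $V' \ni \beta$ in $\Omega$ and pass to the tubes over $V \cap \mathrm{Dom}_\Omega(\psi_g)$ and $V' \cap \mathrm{Dom}_\Omega(\psi_h)$; their $q$-images are clopen and disjoint, because any equivalence between their elements would force equal sources. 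In the second case let $U = \Omega \setminus \mathrm{supp}_\Omega(E(v(h)^* v(g)))$, a clopen neighborhood of $\alpha$, and form the tubes over $U \cap \mathrm{Dom}_\Omega(\psi_g)$ and $U \cap \mathrm{Dom}_\Omega(\psi_h)$; any equivalence between an element of the first tube and an element of the second would require a common source lying in $\mathrm{supp}_\Omega(E(v(h)^* v(g)))$, which is impossible inside $U$, so again the images are disjoint clopen neighborhoods.

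The main obstacle is exactly this last case, where the two groupoid elements share both source and range (equality of ranges being guaranteed by Remark \ref{RemarkPsi}) and yet are distinct. Here one cannot separate by the source or range coordinates alone; the whole point is that $\sim$ is strictly finer than the relation ``same source and same range,'' and the separating neighborhood must be extracted from the clopen complement of the agreement locus $\mathrm{supp}_\Omega(E(v(h)^* v(g)))$. Recognizing that this complement does the job, and that every set involved is closed and open so that the resulting tubes are genuinely open in $\Gamma$, is the crux; the remaining verifications reduce to routine intersections of closed and open sets.
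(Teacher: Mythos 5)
Your proof is correct and follows essentially the same route as the paper: the same computation of the saturation of a tube (it meets each $\Sigma(h)$ in the clopen set cut out by the agreement locus $\mathrm{supp}_\Omega(E(v(h)^*v(g)))$), and the same two-case separation for Hausdorffness (distinct sources, versus equal source lying outside the agreement locus, separated by the complement of that clopen set). The only difference is in the bookkeeping: you deduce (2) and (3) from openness of $q$ on clopen tubes, whereas the paper obtains closedness of $q(\Sigma(g))$ and the homeomorphism in (3) from compactness of $\Sigma(g)$ together with the already-established Hausdorffness of $\Gamma$; both deductions are valid.
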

\begin{proof}
Let $\gamma_1, \gamma_2$ be two distinct elements of $\Gamma$.
Take representatives 
$(\psi_g(\alpha_1), g, \alpha_1)$, $(\psi_h(\alpha_2), h, \alpha_2) \in \Sigma$ of $\gamma_1$ and $\gamma_2$, respectively.
We first suppose that $\alpha_1 \neq \alpha_2 \in \Omega$.
Since $\Omega$ is a Hausdorff space,
there exist disjoint open subsets $\Omega_1, \Omega_2 \subseteq \Omega$ such that $\alpha_l \in \Omega_l$
for $l = 1, 2$.
The subsets 
\begin{eqnarray*}
O_1 &=& 
\bigcup_{k \in I^*} \{ q (\psi_k(\beta), k, \beta) \ 
|\ \beta \in \mathrm{Dom}_\Omega(\psi_k) \cap \Omega_1 \},\\
O_2 &=& 
\bigcup_{k \in I^*} \{ q (\psi_k(\beta), k, \beta) \ 
|\ \beta \in \mathrm{Dom}_\Omega(\psi_k) \cap \Omega_2 \}
\end{eqnarray*}
are open in $\Sigma$.
Indeed, these inverse images 
\begin{eqnarray*}
q^{-1} (O_1) &=&
\bigcup_{k \in I^*} \{ (\psi_k(\beta), k, \beta) \ 
|\ \beta \in \mathrm{Dom}_\Omega(\psi_k) \cap \Omega_1 \},\\
q^{-1} (O_2) &=& 
\bigcup_{k \in I^*} \{ (\psi_k(\beta), k, \beta) \ 
|\ \beta \in \mathrm{Dom}_\Omega(\psi_k) \cap \Omega_2 \} 
\end{eqnarray*} 
are open in $\Sigma$.
The subset $O_1$ contains $\gamma_1$ and $O_2$ contains $\gamma_2$. These open subsets are disjoint.

Suppose that $\alpha_1 = \alpha_2$ and 
$\alpha_1 \notin \mathrm{supp}_\Omega E(v(h)^* v(g))$.
Define a projection $p \in A$ by $p = 1 - E(v(h)^* v(g))$.
Define two subsets $O_1, O_2$ of $\Gamma$ by
\begin{eqnarray*}
O_1        
&=& \{ q (\psi_g(\beta), g, \beta) \ 
|\ \beta \in \mathrm{Dom}_\Omega(\psi_g) \cap \mathrm{supp}_\Omega (p) \},\\
O_2        
&=& \{ q (\psi_h(\beta), h, \beta) \ 
|\ \beta \in \mathrm{Dom}_\Omega (\psi_h) \cap \mathrm{supp}_\Omega (p) \}.
\end{eqnarray*}
The inverse images are 
\begin{eqnarray*}
q^{-1} (O_1)
&=& \bigcup_{k \in I^*} 
\{ (\psi_k(\beta), k, \beta) \ 
|\ k \in I^*, \beta \in 
\mathrm{supp}_\Omega (E(v(k)^* v(g)))
\cap \mathrm{supp}_\Omega (p) \},\\
q^{-1} (O_2)        
&=& \bigcup_{k \in I^*} 
\{ (\psi_k(\beta), k, \beta) \ 
|\ k \in I^*, \beta \in 
\mathrm{supp}_\Omega(E(v(k)^* v(h))) 
\cap \mathrm{supp}_\Omega (p) \}.
\end{eqnarray*}
Since they are open in $\Sigma$, $O_1$ and $O_2$ are open in $\Gamma$.
Moreover, $q^{-1} (O_1)$ and $q^{-1} (O_2)$ are disjoint, since
$E(v(k)^* v(g)) E(v(k)^* v(h)) p = 0$.
Therefore the open subsets $O_1$ and $O_2$ are also disjoint.
Since $\gamma_1 \in O_1$ and $\gamma_2 \in O_2$, 
$\gamma_1$ and $\gamma_2$ are separated by two open subsets. 
It follows that $\Gamma$ is Hausdorff.

By compactness of $\Sigma(g)$ and continuity of $q$, 
the image $q (\Sigma(g))$ is also compact.
Since $\Gamma$ is Hausdorff,
$q (\Sigma(g))$ is closed.
The inverse image $q^{-1} (q (\Sigma(g)))$ is equal to
\begin{eqnarray*}
\bigcup_{k \in I^*} \{ (\psi_k (\beta), k, \beta) \ 
|\ k \in I^*, \beta \in \mathrm{supp}_\Omega (E(v(k)^* v(g))) \}.
\end{eqnarray*}
This is an open subset of $\Sigma$.
It follows that $q (\Sigma(g))$ is open.

The map $q |_{\Sigma(g)}$ is injective by the definition of the equivalence relation $\sim$.
The map $q |_{\Sigma(g)} \colon \Sigma(g) \rightarrow q(\Sigma(g))$ is a bijective continuous map
from a compact space onto a Hausdorff space. It follows that the map is homeomorphic. 
\end{proof}

We denote by $\Gamma(g)$ the subset $q(\Sigma(g)) \subset \Gamma$.
We describe an element $q (\psi_g(\alpha), g, \alpha)$ of $\Gamma(g)$ as $[\psi_g(\alpha), g, \alpha]$.
Before stating the next proposition, 
fix some notations.
\begin{itemize}
\item
The element $0 \in I^*$ stands for $(0) \in I^1$.
\item
For $g = (g(1), g(2), \cdots, g(n)), h = (h(1), h(2), \cdots, h(m)) \in I^*$, 
the product $g * h$ is defined by $(g(1), g(2), \cdots, g(n), h(1), h(2), \cdots, h(m))$. 
\item
The inverse $g^{-1}$ is defined by 
$(- g(n), - g(n - 1), \cdots, - g(1))$.
\end{itemize}
The following relations in $B$ holds:
$v(0) = 1$, $v(g) v(h) = v(g * h)$, and $v(g)^* = v(g^{-1})$.

We prove that $\Gamma$ naturally has a structure of an \'{e}tale groupoid.
\begin{proposition}
The set $\Gamma$ is an \'etale groupoid with the
space of units
\begin{eqnarray*}
\Omega \cong \Gamma(0) = \{ [\alpha, 0, \alpha] \ |\ \alpha \in \Omega \}
\end{eqnarray*}
equipped with the following well-defined continuous operations:
\begin{enumerate}
\item
{range map or target map},
$t \colon \Gamma\rightarrow \Omega \colon [\psi_g (\alpha), g, \alpha] 
\mapsto \psi_g(\alpha)$,
\item
{source map},
$s \colon \Gamma\rightarrow \Omega \colon [\psi_g (\alpha), g, \alpha] \mapsto \alpha$,
\item
{product}
$\colon 
\Gamma^{(2)} \rightarrow \Gamma \colon 
([\psi_g (\alpha), g, \alpha], [\psi_h (\beta), h, \beta]) \mapsto [\psi_g (\alpha), g * h, \beta]$,\\
where $\Gamma^{(2)}$ is 
$\{ ([\psi_g (\alpha), g, \alpha], [\psi_h (\beta), h, \beta]) \in \Gamma \times \Gamma\ 
|\ \alpha = \psi_h (\beta)\} $, 
\item
{inverse}
$\colon \Gamma\rightarrow \Gamma
\colon [\psi_g (\alpha), g, \alpha] \mapsto [\alpha, g^{-1}, \psi_g (\alpha)]$.
\end{enumerate}
\end{proposition}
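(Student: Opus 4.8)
The plan is to verify, in turn, that the four operations descend to equivalence classes, that they are continuous, that they satisfy the groupoid axioms, and finally that the range and source maps are local homeomorphisms. The well-definedness of $s$ is immediate from the definition of $\sim$, since equivalent triples share the same $\Omega$-coordinate $\alpha$, while the well-definedness of $t$ is exactly the content of Remark \ref{RemarkPsi}, which gives $\psi_g(\alpha) = \psi_h(\beta)$ whenever the two triples are equivalent. For continuity and the \'etale property I would argue locally on the clopen cover $\{\Gamma(g)\}_{g \in I^*}$ furnished by Lemma \ref{LemmaTopologyOfGamma}. Transporting along the homeomorphism $q|_{\Sigma(g)}$ and the identification $\Sigma(g) \cong \mathrm{Dom}_\Omega(\psi_g)$, the map $s$ becomes the inclusion $\mathrm{Dom}_\Omega(\psi_g) \hookrightarrow \Omega$ and $t$ becomes the homeomorphism $\psi_g$; both are homeomorphisms onto open subsets of the unit space. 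Thus each $\Gamma(g)$ is an open bisection, the maps $s$ and $t$ are local homeomorphisms, and $\Gamma$ is \'etale.

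For the product I would first check that the proposed formula produces a genuine element of $\Gamma$: from $v(g * h) = v(g) v(h)$ one gets $\psi_{g * h} = \psi_g \circ \psi_h$ on the natural domain, so when $\alpha = \psi_h(\beta)$ the triple $(\psi_g(\alpha), g * h, \beta)$ indeed lies in $\Sigma(g * h)$ with $t = \psi_g(\alpha)$ and $s = \beta$. The composability condition cutting out $\Gamma^{(2)}$ is invariant under $\sim$ because $s$ and $t$ already are. Continuity of the product is then read off the clopen decomposition $\Gamma^{(2)} \cap (\Gamma(g) \times \Gamma(h)) \to \Gamma(g * h)$, where under the identifications it is the inclusion of an open subset of $\Omega$.

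The main obstacle — the step I expect to carry the real weight — is independence of the product from the choice of representatives, that is, that replacing $g$ by $g'$ with $\alpha \in \mathrm{supp}_\Omega(E(v(g')^* v(g)))$ and $h$ by $h'$ with $\beta \in \mathrm{supp}_\Omega(E(v(h')^* v(h)))$ leaves the class of $(\psi_g(\alpha), g * h, \beta)$ unchanged, i.e. $\beta \in \mathrm{supp}_\Omega(E(v(g' * h')^* v(g * h)))$. The key algebraic identity I would establish is that for the projection $e = E(v(g')^* v(g))$ one has $v(g) e = v(g') e$, because on $\mathrm{supp}(e)$ the partial bijections $\phi(g)$ and $\phi(g')$ literally coincide. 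Writing $f = E(v(h')^* v(h))$ and setting $f' = f \cdot v(h)^* e\, v(h)$, which is again a projection in $A$, a short computation with range projections gives $v(h) f' = v(h') f'$ and shows the range of $v(h') f'$ lies under $e$; combining these with $v(g) e = v(g') e$ yields $v(g * h) f' = v(g' * h') f'$, whence $E(v(g' * h')^* v(g * h)) \ge f'$. Since $\beta \in \mathrm{supp}(f')$ — which holds because $\beta \in \mathrm{supp}(f)$ and $\psi_h(\beta) = \alpha \in \mathrm{supp}(e)$ — this is exactly what is required.

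Finally, the well-definedness of the inverse is handled by the analogous and easier identity relating $E(v(h)^* v(g))$ and $E(v(h) v(g)^*)$ under $\psi_g$, while the remaining groupoid axioms — associativity, the unit laws at $[\alpha, 0, \alpha]$, and $\gamma \gamma^{-1} = t(\gamma)$ — reduce to the relations $v(0) = 1$, $v(g) v(h) = v(g * h)$, and $v(g)^* = v(g^{-1})$, together with the fact that $v(g) v(g)^*$ is the range projection, so that they may be verified directly on representatives and then seen to respect $\sim$ by the same support bookkeeping.
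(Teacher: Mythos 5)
Your proposal is correct and follows essentially the same route as the paper: descend the operations to the quotient via the support calculus of the projections $E(v(h)^* v(g))$, get continuity from the quotient map $q$, and read off the \'etale property from the clopen bisections $\Gamma(g)$ of Lemma \ref{LemmaTopologyOfGamma}, with the algebraic axioms checked on representatives using $v(0)=1$, $v(g)v(h)=v(g*h)$, $v(g)^*=v(g^{-1})$. The only notable difference is that you verify independence of the product from representatives in both entries at once via the identity $v(g)e = v(g')e$ for $e = E(v(g')^* v(g))$, whereas the paper explicitly treats only the first entry (computing $\psi_h^{-1}(\mathrm{supp}_\Omega(E(v(g')^* v(g)))) = \mathrm{supp}_\Omega(E(v(g'*h)^* v(g*h)))$) and leaves the rest to an analogous argument; the underlying mechanism is the same.
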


\begin{proof}
By the definition of $\Gamma$, the source map $s$ is well-defined.
The map is continuous, since $s \circ q \colon \Sigma \rightarrow \Omega$ is continuous.
By Lemma \ref{LemmaTopologyOfGamma} (2), 
$\Gamma(g)$ is an open subset of $\Gamma$.
Since the map $s$ is homeomorphic on $\Gamma(g)$,
$s$ is locally homeomorphic. 

We prove that the product map is well-defined.
We only show that the product map does not depend 
on the choice of representatives of the first entry.
Take an element $[\psi_{g * h}(\beta), g, \psi_h(\beta)], [\psi_h(\beta), h, \beta]) \in \Gamma^{(2)}$
and choose another representative $(\psi_{g * h}(\beta), g^\prime, \psi_h(\beta))$ of $[\psi_{g * h}(\beta), g, \psi_h(\beta)]$.
By Remark \ref{RemarkPsi}, every representative is  of this form.
By the definition, we have $\psi_h(\beta) \in \mathrm{supp}_\Omega (E(v(g^\prime)^* v(g)))$.
It follows that 
\begin{eqnarray*}
\beta 
\in \psi_h^{-1} (\mathrm{supp}_\Omega (E(v(g^\prime)^* v(g))))
&=& \mathrm{supp}_\Omega (v(h)^* E(v(g^\prime)^* v(g)) v(h))\\
&=& \mathrm{supp}_\Omega 
(E(v(h)^* v(g^\prime)^* v(g) v(h)))\\
&=& \mathrm{supp}_\Omega (E(v(g^\prime * h)^* v(g * h))).
\end{eqnarray*}
This implies the equality 
$[\psi_{g * h}(\beta), g * h, \beta] = 
[\psi_{g * h}(\beta), g^\prime * h, \beta]$.
We obtain the claim.

To prove the continuity of the product map, define $\Sigma^{(2)}$ by
\begin{eqnarray*}
\Sigma^{(2)} = (q \times q)^{-1} (\Gamma^{(2)}) =
\{ ((\psi_g (\alpha), g, \alpha), (\psi_h (\beta), h, \beta)) \in \Sigma \times \Sigma \ 
|\ \alpha = \psi_h (\beta) \}.
\end{eqnarray*} 
The map 
$\Sigma^{(2)} \ni ((\psi_g (\alpha), g, \alpha), (\psi_h (\beta), h, \beta)) 
\mapsto (\psi_g (\alpha), g * h, \beta) \in \Sigma$
is continuous. Since the space $\Gamma^{(2)}$ is a quotient of $\Sigma^{(2)}$,
the product map on $\Gamma^{(2)}$ is also continuous.

To prove that the inverse map is well-defined,
take an element $\gamma = [\psi_g(\alpha), g, \alpha]$.
By Remark \ref{RemarkPsi}, every representative of $\gamma$
is of the form $(\psi_g(\alpha), h, \alpha)$. 
We also have 
\begin{eqnarray*}
\psi_h (\beta) 
\in  \psi_h (\mathrm{supp}_\Omega (E(v(h)^* v(g))))
&=&    \mathrm{supp}_\Omega (v(h) E(v(h)^* v(g)) v(h)^*)\\
&=&    \mathrm{supp}_\Omega (E(v(g^{-1})^* v(h^{-1}))).
\end{eqnarray*}
It follows that $[\alpha, g^{-1}, \psi_g(\alpha)] = [\beta, h^{-1}, \psi_h(\beta)]$.
This means that the inverse map is well-defined.
The map is also continuous, since the map 
$\Sigma \rightarrow \Sigma
\colon (\psi_g (\alpha), g, \alpha) \mapsto (\alpha, g^{-1}, \psi_g (\alpha))$ is continuous.
The range map $t$ is also well-defined and continuous, since $t$ is the composition of $s$ and the inverse map.

Simple calculations show that $\Gamma$ satisfy algebraic axioms of groupoid.
\end{proof}

\subsection{Construction of a homomorphism}
\label{SubsectionProofGroupoid}
We finish the proof of Theorem \ref{TheoremGroupoid}.

\begin{theorem}\label{TheoremGroupoid}
Let $T$ be a controlled set on a uniformly locally finite coarse space $X$. Suppose that $T$ contains the diagonal subset $\Delta_X$.
Let $L$ be a label $\{\Delta_X = \phi(0), \phi(1), \phi(2), \cdots, \phi(k) \}$ on $T$.
There exist a groupoid $\Gamma$, a groupoid homomorphism $\Phi^* \colon (X, T)^{(2)} \rightarrow \Gamma$, 
and compact and open subsets $\Gamma(1), \Gamma(2), \cdots, \Gamma(k) \subseteq \Gamma$
which satisfy the following conditions:
\begin{enumerate}[(a)]
\item\label{TheoremGroupoidEtale}
The groupoid $\Gamma$ is \'{e}tale and Hausdorff.
\item\label{TheoremGroupoidClopen}
The space of the units $\Gamma(0)$ is compact, and the topology is generated by at most countably many closed and open subsets.
\item\label{TheoremGroupoidGenerator}
The groupoid $\Gamma$ is generated by $\Gamma(0), \Gamma(1), \cdots, \Gamma(k)$.
\suspend{enumerate}
For $i = 0, 1, 2, \cdots, k$, 
\resume{enumerate}[{[(a)]}]
\item\label{TheoremGroupoidInverseImagek}
The source map $s$ and 
the range map $t$ are injective on $\Gamma(i)$ . 
\item\label{TheoremGroupoidUnits}
The inverse image $(\Phi^*)^{-1} (\Gamma(i))$
is $\phi(i)$.
The image $\Phi^*(\phi(i))$ is dense in $\Gamma(i)$.
\item\label{TheoremGroupoidSurjectivity}
Let $\gamma$ be an element of 
$\Gamma$ and 
let $x$ be an element of $X$.
If $t(\gamma) = \Phi^*(x, x)$, then there exists $y \in X$ such that $\gamma = \Phi^*(x, y)$.
If $s(\gamma) = \Phi^*(x, x)$, then there exists $y \in X$ such that $\gamma = \Phi^*(y ,x)$.
\end{enumerate}
\end{theorem}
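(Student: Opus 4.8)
The plan is to observe that properties (\ref{TheoremGroupoidEtale}) and (\ref{TheoremGroupoidClopen}) have essentially been secured already: the preceding proposition shows that $\Gamma$ is an \'etale groupoid and Lemma \ref{LemmaTopologyOfGamma}(\ref{Hausdorff}) shows it is Hausdorff, while the space of units is $\Gamma(0) \cong \Omega$, which is compact and, by Lemma \ref{Lemma;First}, carries a topology generated by at most countably many closed and open sets. So the genuinely new content is the construction of $\Phi^*$ together with the verification of (\ref{TheoremGroupoidGenerator})--(\ref{TheoremGroupoidSurjectivity}), and I would concentrate on these.

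To build $\Phi^*$ I would use point characters. Every $x \in X$ defines a character $\mathrm{ev}_x \in \Omega$ of $A \subseteq \ell_\infty X$ by evaluation, and a standard Gelfand argument shows $\{\mathrm{ev}_x : x \in X\}$ is dense in $\Omega$ (a function in $A$ vanishing at all $\mathrm{ev}_x$ is $0$ in $\ell_\infty X$). For $(x,y) \in (X,T)^{(2)}$ I choose $g \in I^*$ with $(x,y) \in \phi(g)$ and set $\Phi^*(x,y) = [\psi_g(\mathrm{ev}_y), g, \mathrm{ev}_y]$; here $\psi_g(\mathrm{ev}_y) = \mathrm{ev}_{[\phi(g)](y)} = \mathrm{ev}_x$, since $\psi_g$ extends $\phi(g)$ on point characters. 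Independence of the choice of $g$ comes from the definition of $\sim$: if $(x,y) \in \phi(g) \cap \phi(h)$ then $y$ lies in $\{z : [\phi(g)](z) = [\phi(h)](z)\}$, whose indicator is $E(v(h)^* v(g))$, so $\mathrm{ev}_y \in \mathrm{supp}_\Omega(E(v(h)^* v(g)))$ and the triples are equivalent. Functoriality of $\Phi^*$ (compatibility with source, range, product, inverse) is then a direct computation using $v(g) v(h) = v(g * h)$ and $\psi_{g * h} = \psi_g \circ \psi_h$ on point characters.

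For (\ref{TheoremGroupoidGenerator}) I note $\Gamma = \bigcup_{g \in I^*} \Gamma(g)$ and factor a word $g = (g(1), \dots, g(n))$: setting $\alpha_n = \alpha$ and $\alpha_{j-1} = \psi_{g(j)}(\alpha_j)$, one has $[\psi_g(\alpha), g, \alpha] = [\alpha_0, g(1), \alpha_1] \cdots [\alpha_{n-1}, g(n), \alpha_n]$ with each factor in $\Gamma(g(j))$; since $\Gamma(-i) = \Gamma(i)^{-1}$, every $\Gamma(g)$ lies in the subgroupoid generated by $\Gamma(0), \dots, \Gamma(k)$. Injectivity of $s$ and $t$ on $\Gamma(i)$ in (\ref{TheoremGroupoidInverseImagek}) holds because $q|_{\Sigma(i)}$ is a homeomorphism onto $\Gamma(i)$ under which $s$ and $t$ correspond to the identity and to the injection $\psi_i$. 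For (\ref{TheoremGroupoidUnits}), the computation above gives $\Phi^*(\phi(i)) \subseteq \Gamma(i)$; conversely, if $\Phi^*(x,y) \in \Gamma(i)$ with $(x,y) \in \phi(g)$, then equivalence with an element of $\Sigma(i)$ forces $\mathrm{ev}_y \in \mathrm{supp}_\Omega(E(v(i)^* v(g)))$, hence $[\phi(i)](y) = [\phi(g)](y) = x$ and $(x,y) \in \phi(i)$. Density of $\Phi^*(\phi(i))$ in $\Gamma(i)$ reduces to density of $\{\mathrm{ev}_y : y \in \mathrm{Dom}_X \phi(i)\}$ in the clopen set $\mathrm{Dom}_\Omega(\psi_i)$, which follows from density of the point characters in $\Omega$.

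The main obstacle is the lifting property (\ref{TheoremGroupoidSurjectivity}). Given $\gamma = [\psi_g(\alpha), g, \alpha]$ with $t(\gamma) = \psi_g(\alpha) = \mathrm{ev}_x$, I must show $\alpha$ is itself a point character, although a priori it is an arbitrary element of $\Omega$. First, $\mathrm{ev}_x \in \mathrm{supp}_\Omega(v(g) v(g)^*)$ forces $x \in \mathrm{Image}_X \phi(g)$, so there is a unique $y$ with $[\phi(g)](y) = x$. The key computation is at the level of $\ell_2 X$: since $v(g)^* \delta_x = \delta_y$, for every $a \in A$ one has $(v(g) a v(g)^*)(x) = a(y)$, whence $\alpha(a) = [\psi_g(\alpha)](v(g) a v(g)^*) = (v(g) a v(g)^*)(x) = a(y)$, i.e. $\alpha = \mathrm{ev}_y$. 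Then $\gamma = [\psi_g(\mathrm{ev}_y), g, \mathrm{ev}_y] = \Phi^*(x,y)$, and the statement for $s$ follows by applying this to $\gamma^{-1}$. This uniform-local-finiteness-driven rigidity, that point characters are hit only from point characters, is exactly what makes the groupoid faithfully encode the partial bijections, and it is the step I expect to require the most care.
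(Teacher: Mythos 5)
Your proposal is correct and follows essentially the same route as the paper: the same evaluation characters $\widehat{x}$, the same well-definedness argument via $\mathrm{supp}_\Omega(E(v(h)^*v(g)))$, the same word decomposition for (\ref{TheoremGroupoidGenerator}), and the same rigidity computation $\psi_g^{-1}(\widehat{x}) = \widehat{[\phi(g)]^{-1}(x)}$ for (\ref{TheoremGroupoidSurjectivity}). Your treatment of the lifting step is in fact slightly more detailed than the paper's Lemma \ref{LemmaSurjectivity}, which asserts that equality without spelling out the calculation.
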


We have already shown that our concrete $\Gamma$ satisfies 
(\ref{TheoremGroupoidEtale}), (\ref{TheoremGroupoidClopen}),
and (\ref{TheoremGroupoidInverseImagek}).
Condition (\ref{TheoremGroupoidGenerator}) also holds. Indeed, every element $[\psi_g(\alpha), g, \alpha]$
of $\Gamma$ can be written as
\begin{eqnarray*}
[\psi_{g(1)}(\psi_{(g(2), \cdots, g(n))}(\alpha)), g(1), \psi_{(g(2), \cdots, g(n))}(\alpha)] \cdot \cdots \cdot
[\psi_{g(n)}(\alpha), g(n), \alpha].
\end{eqnarray*}
This is a product of elements in $\Gamma(i)$ and their inverses.

We construct a homomorphism $\Phi^*$ satisfying conditions
(\ref{TheoremGroupoidUnits}) and (\ref{TheoremGroupoidSurjectivity}).
\begin{lemma}\label{LemmaPhi*}
Let $\Gamma$ be the groupoid constructed
in the previous subsections.
Let $(x, y)$ be an element of $(X, T)^{(2)}$.
Suppose that 
$g \in I^*$ satisfies $x = [\phi(g)](y)$. Denote by $\widehat{x} ,\widehat{y} \in \Omega$ the characters of $A$ by the evaluations at $x$ and $y$.
\begin{itemize}
\item
If $g \in I^*$ satisfies $x = [\phi(g)](y)$, then
$[\widehat{x}, g, \widehat{y}]$ is an element of $\Gamma$,
\item
If $g, h \in I^*$ satisfy $x = [\phi(g)](y) = [\phi(h)](y)$, then $[\widehat{x}, g, \widehat{y}] = [\widehat{x}, h, \widehat{y}]$.
\end{itemize}
\end{lemma}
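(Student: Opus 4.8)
The plan is to verify directly that the triple $(\widehat{x}, g, \widehat{y})$ is an admissible element of $\Sigma$ and then read off both assertions from the definitions of $\psi_g$ and of the equivalence $\sim$. First I would record the two functional interpretations that drive everything. As elements of $\ell_\infty X$, the projection $v(g)^* v(g)$ is the characteristic function of $\mathrm{Dom}_X(\phi(g))$ and $v(g) v(g)^*$ is the characteristic function of the image of $\phi(g)$; more generally, for $a \in A v(g)^* v(g)$ the operator $v(g) a v(g)^*$ lies in $\ell_\infty X$ and its value at a point $z$ in the image of $\phi(g)$ is $a([\phi(g)]^{-1}(z))$. Since $x = [\phi(g)](y)$ forces $y \in \mathrm{Dom}_X(\phi(g))$, the character $\widehat{y}$ sends $v(g)^* v(g)$ to $1$, so $\widehat{y} \in \mathrm{supp}_\Omega(v(g)^* v(g)) = \mathrm{Dom}_\Omega(\psi_g)$; hence the triple $(\widehat{x}, g, \widehat{y})$ at least makes sense once we check that $\widehat{x} = \psi_g(\widehat{y})$.

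For that identification I would use the defining relation $[v(g) a v(g)^*](\psi_g(\widehat{y})) = a(\widehat{y}) = a(y)$. On the other hand, evaluating the same operator at the genuine point $x = [\phi(g)](y)$ gives $[v(g) a v(g)^*](x) = a([\phi(g)]^{-1}(x)) = a(y)$, and moreover $\widehat{x} \in \mathrm{supp}_\Omega(v(g) v(g)^*)$ because $x$ lies in the image of $\phi(g)$. Thus the two characters $\psi_g(\widehat{y})$ and $\widehat{x}$ agree on all of $A v(g) v(g)^* = C(\mathrm{supp}_\Omega(v(g) v(g)^*))$, and since this algebra separates the points of the compact Hausdorff space $\mathrm{supp}_\Omega(v(g) v(g)^*)$ we conclude $\psi_g(\widehat{y}) = \widehat{x}$. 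This proves the first bullet: $[\widehat{x}, g, \widehat{y}] = [\psi_g(\widehat{y}), g, \widehat{y}]$ is a well-defined element of $\Gamma$.

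The second bullet then reduces to checking the single condition in the definition of $\sim$. Applying the first part to both $g$ and $h$ gives $\psi_g(\widehat{y}) = \widehat{x} = \psi_h(\widehat{y})$ with common source $\widehat{y}$, so the only remaining requirement is $\widehat{y} \in \mathrm{supp}_\Omega(E(v(h)^* v(g)))$. By Remark \ref{RemarkPsi} the function $E(v(h)^* v(g)) \in A$ is the characteristic function of $\{z \in X \mid [\phi(g)](z) = [\phi(h)](z)\}$, and the hypothesis $[\phi(g)](y) = x = [\phi(h)](y)$ places $y$ in this set; hence $\widehat{y}$ evaluates this projection to $1$ and lies in its support. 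The equivalence $(\psi_g(\widehat{y}), g, \widehat{y}) \sim (\psi_h(\widehat{y}), h, \widehat{y})$ follows, which is exactly $[\widehat{x}, g, \widehat{y}] = [\widehat{x}, h, \widehat{y}]$.

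The only place demanding care is the identification $\psi_g(\widehat{y}) = \widehat{x}$: one must compute $v(g) a v(g)^*$ explicitly as a diagonal function and keep track of the clopen supports so that $\widehat{x}$ and $\psi_g(\widehat{y})$ are compared as honest points of the same compact set rather than merely as characters of $A$. The rest is bookkeeping with the established dictionary between projections in $A$ and clopen subsets of $\Omega$.
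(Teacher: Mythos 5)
Your proposal is correct and follows essentially the same route as the paper: the key step in both is the one-line verification that the characters $\psi_g(\widehat{y})$ and $\widehat{x}$ agree (the paper computes $[\psi_g(\widehat{y})](a) = \widehat{y}(v(g)^* a v(g)) = a([\phi(g)](y)) = \widehat{x}(a)$, the dual form of your comparison on $A\,v(g)v(g)^*$), and the second bullet is in both cases the observation that $y$ lies in the set where $\phi(g)$ and $\phi(h)$ agree, hence $\widehat{y} \in \mathrm{supp}_\Omega(E(v(h)^* v(g)))$. Your explicit check that $\widehat{y} \in \mathrm{Dom}_\Omega(\psi_g)$ is a detail the paper leaves implicit, but it is not a different argument.
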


\begin{proof}
For every function $a \in A \subseteq \ell_\infty X$,
we have 
\begin{eqnarray*}
[\psi_g (\widehat{y})] (a) = \widehat{y} (v(g)^* a v(g))
= [v(g)^* a v(g)] (y) = a([\phi(g)](y)) = a(x) = \widehat{x} (a).
\end{eqnarray*}
We obtain the equality $\widehat{x} = \psi_g (\widehat{y})$. The first assertion follows.
The equality $[\phi(g)] (y) = [\phi(h)] (y)$ implies that $y$ is in $\mathrm{supp}_X (E(v(h)^* v(g)))$.
Thus we have $\widehat{y} \in 
\mathrm{supp}_\Omega (E(v(h)^* v(g)))$.
We obtain the second assertion.
\end{proof}

We define a map 
$\Phi^* \colon (X, T)^{(2)} \rightarrow \Gamma$ by 
$\Phi^*(x, y) = [\widehat{x}, g, \widehat{y}]$, where
$g$ is an element of $I^*$ satisfying $[\phi(g)](y) = x$.
Lemma \ref{LemmaPhi*} shows that
$\Phi^*$ is well-defined.
Simple calculations show that $\Phi^*$ is a groupoid homomorphism.

\begin{lemma}\label{LemmaSurjectivity}
If $\gamma \in \Gamma(g)$ and $x \in X$ satisfy $t(\gamma) = \widehat{x}$, 
then there exists $y \in X$ such that 
$(x, y) \in \phi(g)$ and 
$\gamma = [\widehat{x}, g, \widehat{y}]$.
If $s(\gamma) = \widehat{x}$, 
then there exists $y \in X$ such that 
$(y, x) \in \phi(g)$ and 
$\gamma = [\widehat{y}, g, \widehat{x}]$.
\end{lemma}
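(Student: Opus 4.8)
The plan is to exploit the canonical representative of $\gamma$ carried by the label $g$, read off the two ``visible'' spectral coordinates from the source and range maps, and then identify the remaining spectral point with an evaluation character coming from an honest point of $X$. Since $\gamma \in \Gamma(g) = q(\Sigma(g))$ and $q|_{\Sigma(g)}$ is a homeomorphism onto $\Gamma(g)$ by Lemma \ref{LemmaTopologyOfGamma} (3), I would first write $\gamma = [\psi_g(\alpha), g, \alpha]$ for a unique $\alpha \in \mathrm{Dom}_\Omega(\psi_g) = \mathrm{supp}_\Omega(v(g)^* v(g))$.

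For the statement about the source map the argument is direct: by definition $s(\gamma) = \alpha$, so the hypothesis $s(\gamma) = \widehat{x}$ forces $\alpha = \widehat{x}$. Membership $\widehat{x} \in \mathrm{supp}_\Omega(v(g)^* v(g))$ then reads $[v(g)^* v(g)](x) = 1$, i.e.\ $x \in \mathrm{Dom}_X(\phi(g))$, so I may set $y = [\phi(g)](x)$, giving $(y, x) \in \phi(g)$. The computation in the proof of Lemma \ref{LemmaPhi*} shows $\psi_g(\widehat{x}) = \widehat{[\phi(g)](x)} = \widehat{y}$, whence $\gamma = [\widehat{y}, g, \widehat{x}]$, as required.

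For the statement about the range map the extra work is exactly the identification of $\alpha$. Here $t(\gamma) = \psi_g(\alpha) = \widehat{x}$, so $\widehat{x} \in \mathrm{Image}_\Omega(\psi_g) = \mathrm{supp}_\Omega(v(g) v(g)^*)$, which reads $[v(g) v(g)^*](x) = 1$, i.e.\ $x \in \mathrm{Image}_X(\phi(g))$; I set $y = [\phi(g)]^{-1}(x)$, so that $(x, y) \in \phi(g)$. The point is to prove $\alpha = \widehat{y}$. Unwinding the definition of $\psi_g$, for every $a \in A v(g)^* v(g)$ one has $a(\alpha) = [v(g) a v(g)^*](\widehat{x}) = [v(g) a v(g)^*](x)$, and since $v(g) a v(g)^*$ is the pushforward of $a$ along the partial bijection $\phi(g)$, its value at $x$ equals $a([\phi(g)]^{-1}(x)) = a(y)$. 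Thus $\alpha$ and $\widehat{y}$ agree on the corner $A v(g)^* v(g)$. Both characters lie in the clopen set $\mathrm{supp}_\Omega(v(g)^* v(g))$ (for $\widehat{y}$ because $y \in \mathrm{Dom}_X(\phi(g))$), and $A v(g)^* v(g) = C(\mathrm{supp}_\Omega(v(g)^* v(g)))$, so agreement on this corner forces $\alpha = \widehat{y}$. Hence $\gamma = [\widehat{x}, g, \widehat{y}] = \Phi^*(x, y)$.

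The only genuine obstacle is the range-map case: a priori $\alpha$ is merely a point of the spectrum $\Omega$, which is strictly larger than $X$, and one must rule out that it is a ``boundary'' character. This is precisely what the pinning argument above accomplishes---the defining relation $\psi_g(\alpha) = \widehat{x}$ transports $\alpha$ into a clopen corner on which it is forced to coincide with the evaluation $\widehat{y}$ at the honest preimage $y = [\phi(g)]^{-1}(x)$. Alternatively, one could deduce the range-map case from the already established source-map case by applying the latter to $\gamma^{-1} = [\alpha, g^{-1}, \psi_g(\alpha)] \in \Gamma(g^{-1})$, using $s(\gamma^{-1}) = t(\gamma) = \widehat{x}$ together with $\phi(g^{-1}) = \phi(g)^{-1}$.
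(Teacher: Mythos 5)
Your proof is correct and follows essentially the same route as the paper: the paper also takes the unique representative $[\psi_g(\alpha),g,\alpha]$ in $\Sigma(g)$ and reduces everything to the identity $\psi_g^{-1}(\widehat{x}) = \widehat{[\phi(g)]^{-1}(x)}$, which it simply asserts and which your ``pinning'' computation (and the implicit check that $\widehat{x}\in\mathrm{supp}_\Omega(v(g)v(g)^*)$ forces $x\in\mathrm{Image}_X(\phi(g))$) supplies in detail.
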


\begin{proof}
Take an element $\gamma = [\alpha, g, \beta]$ of $\Gamma(g)$.
If $t(\gamma) = \widehat{x}$, then $\alpha = \widehat{x}$ and $\beta = \psi_g^{-1} (\widehat{x})$. 
The equality $\psi_g^{-1} (\widehat{x}) = \widehat{[\phi(g)]^{-1}(x)}$ holds. By letting $y = [\phi(g)]^{-1} (x)$, we get 
the first assertion. 
We obtain the second assertion in the same way.
\end{proof}

We obtain condition (\ref{TheoremGroupoidSurjectivity}).
We next prove that $\Gamma$ satisfies (\ref{TheoremGroupoidUnits}).
Suppose that $\Phi^*(x, y) \in \Gamma(i)$.
Take $g \in I^*$ satisfying $x = [\phi(g)](y)$.
Since $[\widehat{x}, g, \widehat{y}] 
= \Phi^*(x, y) \in \Gamma(i)$, we obtain the equality 
$[\widehat{x}, g, \widehat{y}] 
= [\psi_i (\widehat{y}), i, \widehat{y}]$.
Since $\widehat{y} \in \mathrm{supp}_\Omega(E(v(i)^* v(g)))$,
we have $y \in \mathrm{supp}_X (E(v(i)^* v(g)))$. The equation $x = [\phi(g)](y) = [\phi(i)]y$ follows. 
It follows that the inverse image of $\Gamma(i)$ under $\Phi^*$ is $\phi(i)$.
This is the first half of (\ref{TheoremGroupoidUnits}).
Note that $s$ gives a homeomorphism from $\Gamma(i)$ onto
$\mathrm{Dom}_\Omega (\psi_i)$.
Since the map $\ \widehat{}\ \colon \mathrm{Dom}_X (\phi(i)) \rightarrow \mathrm{Dom}_\Omega (\psi_i)$ has dense image,
so does $\Phi^* \colon \phi(i) \rightarrow \Gamma(i)$. 
We conclude the second half of (\ref{TheoremGroupoidUnits}).
This is the end of the proof of Theorem \ref{TheoremGroupoid}.

\subsection{Translation algebra as a representation of groupoid}
We observe several conclusions of Theorem \ref{TheoremGroupoid}.
We assume that
$\Gamma$, 
$\Phi^* \colon (X, T)^{(2)} \rightarrow \Gamma$, 
and $\Gamma(1)$, $\cdots$, $\Gamma(k)$ 
satisfy the conditions in Theorem \ref{TheoremGroupoid}. 
One does not need to think that they are items constructed in the previous subsections.
We prove that the translation algebra of $X$ gives a representation of $\Gamma$.
For the rest of this paper, we use the following notations.
\begin{itemize}
\item
Let $I$ be the set 
$\{-k, -k +1, \cdots, 0, 1, \cdots, k\}$ and 
$I^*$ be the index set $\bigsqcup_{n = 1}^\infty I^n$.
\item
For negative $i \in I$, define $\Gamma(i)$ by $\Gamma(-i)^{-1}$.
\item
For $g = (g(1), g(2), \cdots, g(n)) \in I^n \subseteq I^*$,
define $\Gamma(g)$ by the product 
\begin{eqnarray*}
\Gamma(g(1)) \cdots \Gamma(g(n))
= \{ \gamma_1 \gamma_2 \cdots \gamma_n \ |\ 
\gamma_i \in \Gamma(g(i)), s(\gamma_i) = t(\gamma_{i + 1}) \}.
\end{eqnarray*}
The set $\Gamma(g)$ is compact and open.
\item
For $g \in I^*$, denote by $\psi_g$ the characteristic function of $\Gamma(g)$. The function $\psi_g$ is continuous and compactly supported. 
\end{itemize}
By condition (\ref{TheoremGroupoidGenerator}), $\{\Gamma(g)\}$ is an open covering of $\Gamma$.

\begin{lemma}\label{LemmaBij}
For any $x \in X$, 
the map $\Phi^*$ defines a bijection from
$(\mathrm{Image}_X)^{-1}(x)$ to $t^{-1} (\Phi^*(x, x))$.
The map $\Phi^*$ gives a bijection from 
$(\mathrm{Dom}_X)^{-1}(x)$ to $s^{-1} (\Phi^*(x, x))$.
\end{lemma}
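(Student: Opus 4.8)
The plan is to verify three things in turn for the range-map statement: that $\Phi^*$ carries $(\mathrm{Image}_X)^{-1}(x)$ into the fibre $t^{-1}(\Phi^*(x,x))$, that this restriction is onto, and that it is one-to-one; the source-map statement is then handled by the mirror-image argument. For the first point I would use only that $\Phi^*$ is a groupoid homomorphism: the range of $(x,y)$ inside $(X,T)^{(2)}$ is the unit $(x,x)$, so $t(\Phi^*(x,y)) = \Phi^*(x,x)$, and hence $\Phi^*\!\left((\mathrm{Image}_X)^{-1}(x)\right) \subseteq t^{-1}(\Phi^*(x,x))$.

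Surjectivity is essentially immediate from condition (\ref{TheoremGroupoidSurjectivity}) of Theorem \ref{TheoremGroupoid}: given $\gamma$ with $t(\gamma) = \Phi^*(x,x)$, that condition produces $y \in X$ with $\gamma = \Phi^*(x,y)$, and the pair $(x,y)$ lies in $(\mathrm{Image}_X)^{-1}(x)$ since it is in the domain of $\Phi^*$ and has first coordinate $x$.

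The one step that needs a genuine idea is injectivity, and here the naive route, trying to separate points of $X$ through the unit space, is the wrong one, since $\Phi^*$ need not be injective on units. Instead I would exploit the groupoid structure directly. Suppose $\Phi^*(x,y_1) = \Phi^*(x,y_2) =: \gamma$. Forming the product $\Phi^*(x,y_2)^{-1}\Phi^*(x,y_1)$ and using that $\Phi^*$ is a homomorphism, this equals $\Phi^*(y_2,x)\,\Phi^*(x,y_1) = \Phi^*(y_2,y_1)$ on one hand and $\gamma^{-1}\gamma = s(\gamma)$ on the other; thus $\Phi^*(y_2,y_1)$ is a unit, i.e.\ it lies in $\Gamma(0)$. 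Now condition (\ref{TheoremGroupoidUnits}) with $i = 0$ gives $(\Phi^*)^{-1}(\Gamma(0)) = \phi(0) = \Delta_X$, so $(y_2,y_1) \in \Delta_X$ and $y_1 = y_2$.

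Finally, the assertion for the source map follows symmetrically: $\Phi^*$ sends $(\mathrm{Dom}_X)^{-1}(x)$ into $s^{-1}(\Phi^*(x,x))$ because $s(\Phi^*(y,x)) = \Phi^*(x,x)$; surjectivity is the second half of condition (\ref{TheoremGroupoidSurjectivity}); and if $\Phi^*(y_1,x) = \Phi^*(y_2,x) = \gamma$, then $\Phi^*(y_1,y_2) = \gamma\gamma^{-1} = t(\gamma) \in \Gamma(0)$, whence $(y_1,y_2) \in \Delta_X$ and $y_1 = y_2$. The main obstacle, as noted, is locating the right reduction for injectivity; once one passes to the product $\Phi^*(y_2,y_1)$ and recognizes it as a unit, everything collapses to the $i=0$ instance of condition (\ref{TheoremGroupoidUnits}), and no facts about the concrete model from the previous subsections are needed.
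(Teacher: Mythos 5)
Your proof is correct and follows essentially the same route as the paper: injectivity by forming the product $\Phi^*(x,y_2)^{-1}\Phi^*(x,y_1)=\Phi^*(y_2,y_1)\in\Gamma(0)$ and invoking condition (\ref{TheoremGroupoidUnits}) for $i=0$ to conclude $(y_2,y_1)\in\phi(0)=\Delta_X$, and surjectivity directly from condition (\ref{TheoremGroupoidSurjectivity}). The only difference is that you spell out the (routine) verification that the image lands in the correct fibre, which the paper leaves implicit.
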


\begin{proof}
We only prove that $\Phi^* \colon (\mathrm{Image}_X)^{-1}(x) \rightarrow t^{-1} (\Phi^*(x, x))$ is bijective.
Take arbitrary elements $(x, y), (x, z) \in (\mathrm{Image}_X)^{-1}(x)$ and suppose that $\Phi^*(x, y) = \Phi^*(x, z)$. 
Then we have $\Phi^*(y, z) = \Phi^*(x, y)^{-1} \Phi^*(x, z)$ 
$\in \Gamma(0)$. By condition (\ref{TheoremGroupoidUnits})
for $\Gamma(0)$, we have $(y, z) \in \phi(0)$ and $y = z$.
We conclude that $\Phi^* \colon (\mathrm{Image}_X)^{-1}(x) \rightarrow t^{-1} (\Phi^*(x, x))$ is injective.
By condition (\ref{TheoremGroupoidSurjectivity}), 
the map is surjective.
\end{proof}

\begin{lemma}\label{LemmaInverseImage}
Let $\Lambda_1, \Lambda_2$ be subsets of $\Gamma$.
Define $\Lambda_1 \Lambda_2$ by $
\{\gamma_1 \gamma_2 \ |\ \gamma_l \in \Gamma_l, s(\gamma_1) = t(\gamma_2)\}$.
Then we have $(\Phi^*)^{-1}(\Lambda_1 \Lambda_2) = 
(\Phi^*)^{-1}(\Lambda_1) \circ (\Phi^*)^{-1}(\Lambda_2)$.
\end{lemma}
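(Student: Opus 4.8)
The plan is to establish the two inclusions separately, using only that $\Phi^*$ is a groupoid homomorphism, the lifting property (\ref{TheoremGroupoidSurjectivity}), and the fibrewise injectivity from Lemma \ref{LemmaBij}. Throughout I identify a unit of $(X,T)^{(2)}$ with its diagonal point, so that $\mathrm{Image}_X(x,y)=x$ and $\mathrm{Dom}_X(x,y)=y$, and I recall that as a homomorphism $\Phi^*$ intertwines the structure maps: $t(\Phi^*(x,y))=\Phi^*(x,x)$ and $s(\Phi^*(x,y))=\Phi^*(y,y)$. Unwinding the two definitions, the claim is that $(x,y)$ admits a midpoint $z\in X$ with $(x,z)\in(\Phi^*)^{-1}(\Lambda_1)$ and $(z,y)\in(\Phi^*)^{-1}(\Lambda_2)$ precisely when $\Phi^*(x,y)$ factors as a composable product inside $\Lambda_1\Lambda_2$.

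For the inclusion $(\Phi^*)^{-1}(\Lambda_1)\circ(\Phi^*)^{-1}(\Lambda_2)\subseteq(\Phi^*)^{-1}(\Lambda_1\Lambda_2)$ I would take $(x,y)$ together with a witness $z$ satisfying $(x,z)\in(\Phi^*)^{-1}(\Lambda_1)$ and $(z,y)\in(\Phi^*)^{-1}(\Lambda_2)$. Then $s(\Phi^*(x,z))=\Phi^*(z,z)=t(\Phi^*(z,y))$, so the two images are composable, and the homomorphism property yields $\Phi^*(x,z)\Phi^*(z,y)=\Phi^*(x,y)$. Since $\Phi^*(x,z)\in\Lambda_1$ and $\Phi^*(z,y)\in\Lambda_2$, this product lies in $\Lambda_1\Lambda_2$, whence $(x,y)\in(\Phi^*)^{-1}(\Lambda_1\Lambda_2)$. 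This direction is routine.

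The harder direction is $(\Phi^*)^{-1}(\Lambda_1\Lambda_2)\subseteq(\Phi^*)^{-1}(\Lambda_1)\circ(\Phi^*)^{-1}(\Lambda_2)$, where I must manufacture the midpoint. Given $(x,y)$ with $\Phi^*(x,y)\in\Lambda_1\Lambda_2$, write $\Phi^*(x,y)=\gamma_1\gamma_2$ with $\gamma_l\in\Lambda_l$ and $s(\gamma_1)=t(\gamma_2)$. Because $t(\gamma_1)=t(\Phi^*(x,y))=\Phi^*(x,x)$, condition (\ref{TheoremGroupoidSurjectivity}) produces $z\in X$ with $\gamma_1=\Phi^*(x,z)$; then $t(\gamma_2)=s(\gamma_1)=\Phi^*(z,z)$ gives, again by (\ref{TheoremGroupoidSurjectivity}), some $w\in X$ with $\gamma_2=\Phi^*(z,w)$. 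The crux is to identify $w$ with $y$: the homomorphism property gives $\Phi^*(x,w)=\gamma_1\gamma_2=\Phi^*(x,y)$, and since both $(x,w)$ and $(x,y)$ lie in the fibre $(\mathrm{Image}_X)^{-1}(x)$, the injectivity of $\Phi^*$ on that fibre (Lemma \ref{LemmaBij}) forces $w=y$. Therefore $(x,z)\in(\Phi^*)^{-1}(\Lambda_1)$ and $(z,y)\in(\Phi^*)^{-1}(\Lambda_2)$, so $(x,y)$ lies in the composition.

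I expect the only genuine subtlety to be this last step. Although $\gamma_1,\gamma_2$ are a priori arbitrary elements of $\Lambda_1,\Lambda_2$, the constraints that their target and source coincide with the specific image points $\Phi^*(x,x)$ and $\Phi^*(z,z)$ are exactly what allow (\ref{TheoremGroupoidSurjectivity}) to lift them back into the image of $\Phi^*$, and Lemma \ref{LemmaBij} then removes the ambiguity of the lift. Everything else is a direct unwinding of the definition of the product $\Lambda_1\Lambda_2$ of subsets of $\Gamma$ and of the composition $\circ$ of subsets of $X^2$.
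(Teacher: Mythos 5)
Your proof is correct and follows essentially the same route as the paper: both reduce the work to the nontrivial inclusion and lift $\gamma_1$ back to $\Phi^*(x,z)$ via condition (\ref{TheoremGroupoidSurjectivity}). The only divergence is the last step --- the paper recovers the second factor directly from the homomorphism property by computing $\Phi^*(z,y)=\Phi^*(z,x)\Phi^*(x,y)=\gamma_1^{-1}(\gamma_1\gamma_2)=\gamma_2$, whereas you perform a second lift of $\gamma_2$ and then invoke the fibrewise injectivity of Lemma \ref{LemmaBij} to identify the endpoint; both are valid, the paper's variant being marginally more economical since it needs no injectivity.
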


\begin{proof}
It suffices to show that $(\Phi^*)^{-1} (\Lambda_1 \Lambda_2) \subseteq (\Phi^*)^{-1}(\Lambda_1) \circ (\Phi^*)^{-1}(\Lambda_2)$. 
Suppose that $(x, z) \in (\Phi^*)^{-1} (\Lambda_1 \Lambda_2)$. Then there exist $\gamma_1 \in \Lambda_1$ and $\gamma_2 \in \Lambda_2$ such that $\Phi^*(x, z) = \gamma_1 \gamma_2$. 
By the equation $\Phi^*(x, x) = t(\gamma_1)$ and
by condition (\ref{TheoremGroupoidSurjectivity}),
there exists $y \in X$ such that $\Phi^*(x, y) = \gamma_1$. 
We also have $\Phi^*(y, z) = \Phi^*(y, x) \Phi^*(x, z) = \gamma_1^{-1} (\gamma_1 \gamma_2) = \gamma_2$.
It follows that $(x, z) = (x, y)(y, z) \in (\Phi^*)^{-1}(\Lambda_1) \circ (\Phi^*)^{-1}(\Lambda_2)$.
\end{proof}

Denote by $C_c(\Gamma)$ the set of all the continuous functions on $\Gamma$ whose supports are compact.
The space $C_c(\Gamma)$ is a $*$-algebra equipped with product and involution
\begin{eqnarray*}
[f f^\prime](\gamma_0) &=& \sum_
{(\gamma, \gamma^{\prime}) \in \Gamma^{(2)}, \  
\gamma \gamma^\prime = \gamma_0}
f(\gamma) f^\prime(\gamma^\prime),\\
f^*(\gamma_0) &=& \overline{f(\gamma_0^{-1})},
\quad f, f^\prime \in C_c(\Gamma), \quad \gamma_0 \in \Gamma.
\end{eqnarray*}
The characteristic function $\psi_0$ of $\Gamma(0)$ is 
the unit of $C_c(\Gamma)$.

Define a map $\Phi$ from $C_c(\Gamma)$ to the set of functions on $(X, T)^{(2)}$ by the pull back of $\Phi^*$.
Namely, $\Phi$ is given by the equality $[\Phi(f)](x, y) = f(\Phi^*(x, y))$. 
We regard $\Phi(f) = [[\Phi(f)](x, y)]_{(x, y) \in (X, T)^{(2)}}$
as an infinite matrix and as an operator on $\ell_2 X$.

\begin{lemma}\label{LemmaEquationOfPhi}
The map $\Phi$ gives a unital $*$-homomorphism from $C_c(\Gamma)$ to the translation algebra $\mathcal{A}^\infty(X)$.
\end{lemma}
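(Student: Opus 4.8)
The plan is to verify directly that $\Phi$ respects the three structural features of a $*$-homomorphism into $\mathcal{A}^\infty(X)$: that $\Phi(f)$ genuinely lands in the translation algebra (has finite propagation), that $\Phi$ is multiplicative and $*$-preserving, and that it sends the unit $\psi_0$ to the identity operator. First I would check the unital property: by Theorem \ref{TheoremGroupoid} (\ref{TheoremGroupoidUnits}) with $i = 0$, we have $(\Phi^*)^{-1}(\Gamma(0)) = \phi(0) = \Delta_X$, so $[\Phi(\psi_0)](x, y) = \psi_0(\Phi^*(x, y))$ is $1$ exactly when $(x, y) \in \Delta_X$ and $0$ otherwise; hence $\Phi(\psi_0)$ is the identity matrix on $\ell_2 X$.

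Next I would establish finite propagation. Since any $f \in C_c(\Gamma)$ has compact support and $\{\Gamma(g)\}_{g \in I^*}$ is an open covering of $\Gamma$ by condition (\ref{TheoremGroupoidGenerator}), the support of $f$ is covered by finitely many $\Gamma(g)$, say for $g$ ranging over a finite subset $F \subseteq I^*$. Then $\Phi^*(x, y)$ lies in the support of $f$ only if $(x, y) \in \bigcup_{g \in F} (\Phi^*)^{-1}(\Gamma(g))$. Using Lemma \ref{LemmaInverseImage} repeatedly together with $(\Phi^*)^{-1}(\Gamma(i)) = \phi(i)$, the set $(\Phi^*)^{-1}(\Gamma(g))$ equals the controlled set $\phi(g(1)) \circ \cdots \circ \phi(g(n)) \subseteq T^{\circ n}$, so the matrix coefficients of $\Phi(f)$ are supported on the controlled set $\bigcup_{g \in F} (\Phi^*)^{-1}(\Gamma(g))$. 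This shows $\Phi(f)$ has finite propagation, and since it is a bounded infinite matrix supported on a controlled set it defines an element of $\mathcal{A}^\infty(X)$.

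For multiplicativity, I would compute $[\Phi(f) \Phi(f')](x, z)$ as an operator product of infinite matrices, namely $\sum_{y \in X} f(\Phi^*(x, y)) f'(\Phi^*(y, z))$, and compare it with $[\Phi(ff')](x, z) = (ff')(\Phi^*(x, z)) = \sum_{\gamma \gamma' = \Phi^*(x, z)} f(\gamma) f'(\gamma')$. The key is to match the two index sets: by Lemma \ref{LemmaBij}, for fixed $x$ the map $\Phi^*$ restricts to a bijection from $(\mathrm{Image}_X)^{-1}(x)$ onto $t^{-1}(\Phi^*(x, x))$, and each factorization $\Phi^*(x, z) = \gamma \gamma'$ forces $t(\gamma) = \Phi^*(x, x)$, so by condition (\ref{TheoremGroupoidSurjectivity}) the intermediate groupoid element $\gamma$ has the form $\Phi^*(x, y)$ for a unique $y$, with $\gamma' = \Phi^*(y, z)$ automatically. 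This gives a bijection between the summation over $y \in X$ and the summation over factorizations in the groupoid, and the two expressions agree termwise. The involution is the routine check $[\Phi(f)^*](x, y) = \overline{[\Phi(f)](y, x)} = \overline{f(\Phi^*(y, x))} = \overline{f(\Phi^*(x, y)^{-1})} = f^*(\Phi^*(x, y)) = [\Phi(f^*)](x, y)$.

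The main obstacle I anticipate is the bookkeeping in the multiplicativity step: I must confirm that the bijection supplied by Lemma \ref{LemmaBij} and condition (\ref{TheoremGroupoidSurjectivity}) is compatible with the groupoid convolution sum, i.e. that distinct intermediate points $y$ correspond to distinct factorizations $(\gamma, \gamma')$ and that every factorization arises this way. Injectivity and surjectivity of $\Phi^*$ on the relevant fibers handle both directions, but one should be careful that the finite propagation established above guarantees that all these sums are finite, so the termwise matching is legitimate and no convergence issue arises.
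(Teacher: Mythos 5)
Your proposal is correct and follows essentially the same route as the paper: finite propagation via the covering $\{\Gamma(g)\}$ and condition (\ref{TheoremGroupoidUnits}), multiplicativity by using Lemma \ref{LemmaBij} to identify the matrix-product sum over intermediate points $y$ with the groupoid convolution sum over factorizations, and the unital and $*$-preserving properties from $(\Phi^*)^{-1}(\Gamma(0)) = \Delta_X$ and compatibility of $\Phi^*$ with inverses. The only cosmetic difference is that you cover the support of a general $f$ by finitely many $\Gamma(g)$ (noting the inverse images land in $(T \cup T^{-1})^{\circ n}$, hence are controlled), whereas the paper first reduces to $f$ supported on a single $\Gamma(g)$.
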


\begin{proof}
Since $\{\Gamma(g)\}$ covers $\Gamma$,
$\bigcup_g C(\Gamma(g))$ spans $C_c(\Gamma)$.
For every continuous function $f$ on $\Gamma(g)$, $\Phi(f) = f \circ \Phi^*$ is supported on a graph of a partial bijection $\phi(g) \subseteq (X, T)^{(2)}$, by condition (\ref{TheoremGroupoidUnits}).
The matrix $\Phi(f)$ defines 
a bounded operator with finite propagation. 
It follows that that the image of $\Phi$ is contained in $\mathcal{A}^\infty(X)$.

For $f, f^\prime \in C_c(\Gamma)$ and 
$(x, y) \in (X, T)^{(2)}$, we have
\begin{eqnarray*}
\langle \Phi(f) \Phi(f^\prime) \delta_y, \delta_x \rangle
&=&
\sum_{(x, z) \in (\mathrm{Image}_X)^{-1}(x)}
\langle \Phi(f) \delta_z, \delta_x \rangle
\langle \Phi(f^\prime) \delta_y, \delta_z \rangle \\
&=&
\sum_{(x, z) \in (\mathrm{Image}_X)^{-1}(x)}
f (\Phi^*(x, z)) f^\prime (\Phi^*(z, y)).
\end{eqnarray*}
Since $\Phi^* \colon (\mathrm{Image}_X)^{-1}(x)
\rightarrow t^{-1}(\Phi^*(x, x))$ is a bijection 
(Lemma \ref{LemmaBij}), we have
\begin{eqnarray*}
\langle \Phi(f) \Phi(f^\prime) \delta_y, \delta_x \rangle
&=&
\sum_{\gamma \in t^{-1}(\Phi^*(x, x))}
f (\gamma) f^\prime (\gamma^{-1} \Phi^*(x, y))\\
&=&
[f f^\prime] (\Phi^*(x, y))\\
&=&
\langle \Phi(f f^\prime) \delta_y, \delta_x \rangle.
\end{eqnarray*}
It follows that $\Phi$ is multiplicative. Since $\Phi^*$ is compatible with the inverse maps, $\Phi$ is a $*$-homomorphism.
By condition (\ref{TheoremGroupoidUnits}) for $\Gamma(0)$, the matrix coefficients of $\Phi(\psi_0)$ are $1$ on the diagonal set  and $0$ on the compliment. It follows that $\Phi$ is unital.
\end{proof}

\subsection{Haar system on the groupoid $\Gamma$}

For an unit $\alpha \in \Gamma(0)$ of $\Gamma$,
let $\nu^\alpha$ be the counting measure on the set
$t^{-1}(\alpha)$.
The family of measures $\nu = \{\nu^\alpha\}$ satisfies the axioms of continuous Haar system:
\begin{itemize}
\item
The support of $\nu^\alpha$ is $t^{-1}(\alpha)$,
\item
For $f \in C_c(\Gamma)$, the map $\Gamma(0) \ni \alpha \mapsto \int_{\gamma \in t^{-1}(\alpha)} f d \nu^\alpha \in \mathbb{C}$
is continuous,
\item
For every $\gamma \in \Gamma$ and every $f \in C_c(\Gamma)$, $\int_{\gamma^\prime} 
f(\gamma \gamma^\prime) d \nu^{s(\gamma)} 
= \int_{\gamma^\prime} f(\gamma^\prime) d \nu^{t(\gamma)}$.
\end{itemize}
For Haar systems, the reader is referred to Renault's book \cite[Chapter I. 2]{BookRenault}.
Let $\mu$ be a measure on $\Gamma(0)$.
Consider a measure $\mu \circ \nu$ 
on $\Gamma$ defined by
\begin{eqnarray*}
\int_{\gamma \in \Gamma} 
f(\gamma) d \mu \circ \nu(\gamma) = 
\int_{\alpha \in \Gamma(0)} 
\left( \int_{\gamma \in t^{-1} (\alpha) } 
f(\gamma) d \nu^\alpha(\gamma) \right)
d \mu(\alpha), \quad f \in C_c(\Gamma).
\end{eqnarray*}

Let $w = \sum_{x \in X} w_x \delta_x$ 
be a probability measure on $X$. 
Denote by $c^x$ be the counting measure of the set
$(\mathrm{Image}_X)^{-1} (x)$.
We define the measure $w \circ c$ by 
\begin{eqnarray*}
w \circ c (Z) = 
\sum_{x \in X} w_x c^x
((\mathrm{Image}_X)^{-1}(x) \cap Z), \quad
Z \subseteq (X, T)^{(2)}.
\end{eqnarray*}

\begin{lemma}\label{LemmaPushForward}
Suppose that $\mu$ is the push forward of $w$ with respect to $\Phi^* \colon X \cong \Delta_X \rightarrow \Gamma(0)$.
Then the measure $\mu \circ \nu$ is equal to the push forward of $w \circ c$.
\end{lemma}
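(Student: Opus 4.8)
The plan is to prove equality of the two measures on $\Gamma$ by testing against compactly supported continuous functions. Both $\mu \circ \nu$ and the push forward $(\Phi^*)_*(w \circ c)$ are Radon measures on the locally compact Hausdorff space $\Gamma$, so it suffices to check that $\int_\Gamma f \, d(\mu \circ \nu) = \int_{(X,T)^{(2)}} (f \circ \Phi^*) \, d(w \circ c)$ for every $f \in C_c(\Gamma)$. Since $\Phi^*$ is the identification $X \cong \Delta_X \to \Gamma(0)$ on units, the hypothesis on $\mu$ reads $\mu = \sum_{x \in X} w_x \delta_{\Phi^*(x,x)}$, and the whole argument reduces to an identity between two weighted sums indexed by fibers.

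I would first unfold the left-hand side, using the definition of $\mu \circ \nu$ together with the fact that each $\nu^\alpha$ is the counting measure on $t^{-1}(\alpha)$, to obtain
\[ \int_\Gamma f \, d(\mu \circ \nu) = \sum_{x \in X} w_x \int_{t^{-1}(\Phi^*(x,x))} f \, d\nu^{\Phi^*(x,x)} = \sum_{x \in X} w_x \sum_{\gamma \in t^{-1}(\Phi^*(x,x))} f(\gamma). \]
Unfolding the right-hand side from the definitions of the push forward and of $w \circ c$, with $c^x$ the counting measure on $(\mathrm{Image}_X)^{-1}(x)$, gives
\[ \int_{(X,T)^{(2)}} (f \circ \Phi^*) \, d(w \circ c) = \sum_{x \in X} w_x \sum_{(x,y) \in (\mathrm{Image}_X)^{-1}(x)} f(\Phi^*(x,y)). \]
The two expressions then coincide termwise in $x$: by Lemma \ref{LemmaBij}, $\Phi^*$ restricts to a bijection from $(\mathrm{Image}_X)^{-1}(x)$ onto $t^{-1}(\Phi^*(x,x))$, so reindexing the inner sum by this bijection converts the second display into the first. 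Equivalently, the fiberwise bijection of Lemma \ref{LemmaBij} transports $c^x$ to $\nu^{\Phi^*(x,x)}$, which is exactly the required matching.

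The genuine content of the statement thus lies entirely in the fiberwise bijection already supplied by Lemma \ref{LemmaBij}; what remains is bookkeeping. The nearest thing to an obstacle is the well-definedness of the inner sums, since the fibers $t^{-1}(\alpha)$ need not be finite. This is handled by compact support together with the \'etale structure (condition (\ref{TheoremGroupoidEtale})): as $t$ is a local homeomorphism, each fiber $t^{-1}(\alpha)$ is discrete, and a discrete set meets the compact support of $f$ in only finitely many points. Hence every inner sum above is finite, the map $\alpha \mapsto \int_{t^{-1}(\alpha)} f \, d\nu^\alpha$ is the continuous function guaranteed by the Haar system axioms, and the reindexing by Lemma \ref{LemmaBij} is legitimate. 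This establishes the equality of the two measures.
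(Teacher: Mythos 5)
Your argument is correct and is essentially the paper's proof written out in full: the paper simply states that the lemma ``immediately follows from Lemma \ref{LemmaBij}'', and your unfolding of both measures into fiberwise sums matched by the bijection of that lemma is exactly the intended reasoning. The extra care you take with finiteness of the inner sums via the \'etale structure is a welcome but minor elaboration, not a different route.
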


\begin{proof}
This lemma immediately follows from 
Lemma \ref{LemmaBij}.
\end{proof}

\section{Proof of the main theorem}
\label{SectionProof}

From now on, we focus on box spaces.
For a box space of a residually finite group, every component has group structures.
Components of a generalized box space do not have group structures. 
Instead of groups, we exploit the groupoid in Theorem \ref{TheoremGroupoid}. 
The groupoid naturally has an invariant measure. 

\subsection{Generalized box space and measured groupoid}
\label{SubsectionMeasuredGroupoid}
Let $X = \bigsqcup_{m = 1}^\infty X_m$ be a generalized box space and let $T$ be
a controlled set of $X$. Let $L$ be a label on $T$.
Suppose that a groupoid $\Gamma$, a homomorphism $\Phi^* \colon (X, T)^{(2)} \rightarrow \Gamma$ and closed and open subsets $\Gamma(1), \cdots, \Gamma(k) \subseteq \Gamma$ satisfy Theorem
\ref{TheoremGroupoid}.
Let $w_m$ be the normalized counting measure on the component $X_m$, i.e.,
$w_m(Y) = \sharp(Y) / \sharp(X_m), \ Y \subseteq X_m$.
Define $\mu_m$ by the push forward of $w_m$.
Let $\mu$ be an accumulation point of 
$\{\mu_m\}_{m \in \mathbb{N}} \in \mathrm{Prob}(\Gamma(0))$.
We call $\mu$ a `limit measure.'

\begin{lemma}\label{LemmaInvInv}
The measure $\mu \circ \nu$ on $\Gamma$ is invariant under the inverse map.
Namely,
the equation $\mu \circ \nu(\Lambda) = \mu \circ \nu(\Lambda^{-1})$ holds for every Borel subset $\Lambda \subseteq \Gamma$.
\end{lemma}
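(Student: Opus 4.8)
The plan is to establish the inverse-invariance first at each finite level $m$ by a counting bijection, and then to pass to the limit $m \to \infty$ using the continuity built into the Haar system together with the weak-$*$ convergence of $\mu_m$ to $\mu$.

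First I would show that each finite-level measure $w_m \circ c$ on $(X, T)^{(2)}$ is symmetric under the inverse map $(x, y) \mapsto (y, x)$. Since $X$ is a generalized box space, every controlled set, and hence $(X, T)^{(2)} = \bigcup_n (T \cup T^{-1})^{\circ n}$, is contained in $\bigsqcup_\ell X_\ell \times X_\ell$; consequently any $(x, y) \in (X, T)^{(2)}$ has both coordinates in the same finite component. Because $w_m$ is the uniform probability measure on $X_m$ and $c^x$ is the counting measure on $(\mathrm{Image}_X)^{-1}(x)$, for a Borel set $Z \subseteq (X, T)^{(2)}$ one computes $w_m \circ c(Z) = \sharp(Z \cap (X_m \times X_m)) / \sharp(X_m)$. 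The inverse map restricts to a bijection of the finite set $(X, T)^{(2)} \cap (X_m \times X_m)$, so $\sharp(Z^{-1} \cap (X_m \times X_m)) = \sharp(Z \cap (X_m \times X_m))$, which gives $w_m \circ c(Z^{-1}) = w_m \circ c(Z)$.

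Next I would transport this symmetry to $\Gamma$. Since $\Phi^*$ is a groupoid homomorphism, it intertwines the inverse maps, i.e. $\Phi^*(y, x) = \Phi^*(x, y)^{-1}$, so the inverse map on $\Gamma$ is the push-forward of the inverse map on $(X, T)^{(2)}$. By Lemma \ref{LemmaPushForward}, $\mu_m \circ \nu$ is the push-forward of $w_m \circ c$ under $\Phi^*$; combining this with the symmetry of $w_m \circ c$ just established yields that $\mu_m \circ \nu$ is invariant under the inverse map of $\Gamma$ for every $m$.

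Finally I would pass to the limit. For $f \in C_c(\Gamma)$, set $\hat{f}(\alpha) = \int_{t^{-1}(\alpha)} f \, d\nu^\alpha$; by the continuity axiom of the Haar system the function $\hat{f}$ is continuous on the compact space $\Gamma(0)$, and by definition $\int_\Gamma f \, d(\mu_m \circ \nu) = \int_{\Gamma(0)} \hat{f} \, d\mu_m$. Since $\mu$ is an accumulation point of $\{\mu_m\}$ in the weak-$*$ topology, along the relevant subnet $\int_{\Gamma(0)} \hat{f} \, d\mu_m \to \int_{\Gamma(0)} \hat{f} \, d\mu = \int_\Gamma f \, d(\mu \circ \nu)$. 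Because the inverse map $\iota$ is a homeomorphism, $f \circ \iota$ again lies in $C_c(\Gamma)$, so applying the finite-level invariance $\int f \circ \iota \, d(\mu_m \circ \nu) = \int f \, d(\mu_m \circ \nu)$ and letting $m \to \infty$ along the subnet gives $\int f \circ \iota \, d(\mu \circ \nu) = \int f \, d(\mu \circ \nu)$ for all $f \in C_c(\Gamma)$, which by the Riesz representation theorem is exactly the asserted equality $\mu \circ \nu(\Lambda) = \mu \circ \nu(\Lambda^{-1})$ for Borel $\Lambda$. The main obstacle is this limiting step: one must ensure that integration against the Haar system converts $f$ into a genuinely continuous, and hence weak-$*$-testable, function on $\Gamma(0)$, which is precisely where the continuity axiom of $\nu$ and the compactness of $\Gamma(0)$ in Theorem \ref{TheoremGroupoid} are indispensable.
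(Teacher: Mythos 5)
Your proposal is correct and follows essentially the same route as the paper: establish inverse-invariance of $w_m \circ c$ from its being (a scalar multiple of) the counting measure on each component, push forward via Lemma \ref{LemmaPushForward} to get invariance of $\mu_m \circ \nu$, and pass to the limit using that $\mu \circ \nu$ is an accumulation point of $\{\mu_m \circ \nu\}$ against continuous test functions. Your version merely spells out the weak-$*$ limiting step (via the continuity axiom of the Haar system) in more detail than the paper does.
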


\begin{proof}
The measure $w_m \circ c$ on $(X, T)^{(2)}$ is
invariant under the inverse map, since the measure is a scalar multiple of the counting measure of $(X_m, T)^{(2)}$.
By Lemma \ref{LemmaPushForward}, $\mu_m \circ \nu$ is
also invariant under the inverse map.
The measure $\mu \circ \nu$ is an accumulation point
of $\{ \mu_m \circ \nu \}$ on every $\Gamma(g)$. It follows that $\mu \circ \nu$ is also an invariant measure.
\end{proof}

An element $\psi \in C_c (\Gamma)$ acts on the Hilbert space $L^2(\Gamma, \mu \circ \nu)$ by 
a bounded operator $\lambda(f)$ as follows:
\begin{eqnarray*}
[\lambda(\psi) \xi](\gamma) &=& \sum_
{(\gamma^\prime, \gamma^{\prime\prime}) \in \Gamma^{(2)}, \  
\gamma^\prime \gamma^{\prime\prime} = \gamma}
\psi(\gamma^\prime) \xi(\gamma^{\prime\prime}),
\quad \xi \in L^2(\Gamma, \mu \circ \nu), \gamma \in \Gamma.
\end{eqnarray*}
The map $\lambda \colon C_c(\Gamma) \rightarrow \mathbb{B}(\mathcal{H})$ is a $*$-representation of 
$C_c(\Gamma)$ and
called the regular representation of the
measured groupoid $(\Gamma, \nu, \mu)$. 
The closure of the image $\lambda(C_c(\Gamma))$
is called the reduced C$^*$-algebra 
and written as $C^*_{\mathrm{red}}(\Gamma, \nu, \mu)$.
We denote by $W^*(\Gamma, \nu, \mu)$ the von Neumann algebra
generated by $C^*_{\mathrm{red}}(\Gamma, \nu, \mu)$.
Denote by $\psi_0$ be the characteristic function of $\Gamma(0)$ in $L^2(\Gamma, \mu \circ \nu)$.
\begin{lemma}
\begin{enumerate}
\item
The vector $\psi_0$ is cyclic for $C^*_{\mathrm{red}} (\Gamma, \nu, \mu)$.
\item
The state $\tau = \langle \cdot \psi_0, \psi_0 \rangle_{L^2(\Gamma, \mu \circ \nu)}$ on
$W^* (\Gamma, \nu, \mu)$ is tracial.
\item
The state $\tau$ is faithful.
\end{enumerate}
\end{lemma}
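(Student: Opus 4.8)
The plan is to verify the three properties of the vector $\psi_0$ and the state $\tau$ by exploiting the \'etale structure and the counting-measure Haar system. For cyclicity, I would first observe that $L^2(\Gamma, \mu \circ \nu)$ is spanned by the subspaces $L^2(\Gamma(g), \mu \circ \nu)$ as $g$ ranges over $I^*$, since $\{\Gamma(g)\}$ is an open covering of $\Gamma$ by condition (\ref{TheoremGroupoidGenerator}). The key point is that for $\psi \in C_c(\Gamma(g))$, the convolution $\lambda(\psi)\psi_0$ is, by the definition of the product in $C_c(\Gamma)$ and the fact that $\psi_0$ is supported on the units $\Gamma(0)$, essentially the function $\psi$ itself pushed onto $\Gamma(g)$. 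Concretely, $[\lambda(\psi)\psi_0](\gamma) = \sum_{\gamma'\gamma'' = \gamma} \psi(\gamma')\psi_0(\gamma'') = \psi(\gamma)$ because $\psi_0(\gamma'') \neq 0$ forces $\gamma''$ to be a unit and hence $\gamma' = \gamma$. Thus $\lambda(C_c(\Gamma))\psi_0 \supseteq C_c(\Gamma)$, which is dense in $L^2$, giving cyclicity.

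For the trace property, I would compute $\langle \lambda(\psi)\lambda(\psi')\psi_0, \psi_0\rangle$ and $\langle \lambda(\psi')\lambda(\psi)\psi_0, \psi_0\rangle$ for $\psi, \psi' \in C_c(\Gamma)$ and show they agree, which suffices by density and normality of $\tau$. Using the computation above, $\langle \lambda(\psi)\psi', \psi_0\rangle$ unwinds to an integral over the units of $[\psi\psi'](\alpha) = \sum_{t(\gamma)=\alpha}\psi(\gamma)\psi'(\gamma^{-1}\alpha)$ against $\mu$; since $\psi_0$ restricts us to $\gamma_0 \in \Gamma(0)$, only the diagonal terms $\gamma' = (\gamma'')^{-1}$ contribute, so $\tau(\lambda(\psi)\lambda(\psi')) = \int_{\Gamma(0)}\sum_{t(\gamma)=\alpha}\psi(\gamma)\psi'(\gamma^{-1})\,d\mu(\alpha) = \int_\Gamma \psi(\gamma)\psi'(\gamma^{-1})\,d(\mu\circ\nu)(\gamma)$. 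The symmetry in $\psi$ and $\psi'$ of the final expression is then exactly the statement that $\mu \circ \nu$ is invariant under the inverse map $\gamma \mapsto \gamma^{-1}$, which is precisely Lemma \ref{LemmaInvInv}. This is where the invariance established earlier does the essential work.

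For faithfulness, I would argue that if $a \in W^*(\Gamma,\nu,\mu)$ is a positive element with $\tau(a) = 0$, then $\langle a\psi_0, \psi_0\rangle = 0$ forces $a^{1/2}\psi_0 = 0$; combined with cyclicity of $\psi_0$ and the trace property (so that $\psi_0$ is also separating), one concludes $a = 0$. The cleanest route is to note that a cyclic trace vector for a von Neumann algebra is automatically separating, so faithfulness of $\tau$ follows formally once the first two parts are in hand.

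The main obstacle I anticipate is bookkeeping rather than conceptual: one must handle the convolution sums carefully on an \'etale groupoid where the fibers $t^{-1}(\alpha)$ are discrete (here the counting measures $\nu^\alpha$ make all the integrals into honest sums), and one must confirm that the formal manipulations with $C_c(\Gamma)$ extend to $L^2$ and to the weak closure $W^*(\Gamma,\nu,\mu)$ by the usual density and normality arguments. The genuinely substantive input is the inverse-invariance of $\mu\circ\nu$, but that has already been secured in Lemma \ref{LemmaInvInv}, so the remaining work is to route the trace identity through that invariance correctly.
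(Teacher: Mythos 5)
Your proposal is correct and follows essentially the same route as the paper: cyclicity from $\lambda(f)\psi_0 = f$, the trace property from the inverse-invariance of $\mu\circ\nu$ (Lemma \ref{LemmaInvInv}), and faithfulness from the fact that a cyclic vector implementing a trace is separating. The only cosmetic differences are that the paper verifies traciality via $\|f\|_2 = \|f^*\|_2$ (i.e.\ $\tau(a^*a)=\tau(aa^*)$) rather than the symmetry of $\tau(\lambda(\psi)\lambda(\psi'))$, and makes the separating-vector argument explicit by constructing the conjugation $J$ and showing $J\lambda(C_c(\Gamma))J$ lies in the commutant.
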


\begin{proof}
Because $\lambda(C_c(\Gamma))\psi_0$
consists of compactly supported continuous functions,
the first statement follows.
For every $f \in C_c(\Gamma)$, we have 
$\tau (\lambda(f)^* \lambda(f)) = 
\| \lambda(f) \psi_0 \|^2_2 = 
\| f \|_2^2$.
Since the measure $\mu \circ \nu$ is invariant under the inverse map, the equality 
$\| f \|_2^2 = 
\| f^* \|_2^2$ follows.
We have
 $\tau (\lambda(f)^* \lambda(f)) 
= \tau (\lambda(f) \lambda(f)^*)$.
It follows that $\tau$ has the trace property.

Define an anti-linear isometry $J$ on 
$L^2(\Gamma, \mu \circ \nu)$ by
\begin{eqnarray*}
[J \xi](\gamma) = \overline{\xi (\gamma^{-1})}, \quad
\xi \in L^2(\Gamma, \mu \circ \nu), \gamma \in \Gamma.
\end{eqnarray*}
A simple calculation shows the following equality:
\begin{eqnarray*}
[\lambda(f_1) J \lambda(f_2) J \xi](\gamma) 
&=&
\sum_{(\gamma_1, \gamma^\prime, \gamma_2) \in \Gamma^{(3)}, 
\gamma_1 \gamma^\prime \gamma_2 = \gamma}
f_1(\gamma_1) \xi(\gamma^\prime) \overline{f_2(\gamma_2)}\\
&=&
[J \lambda(f_2) J \lambda(f_1) \xi](\gamma). 
\end{eqnarray*}
It follows that $J \lambda(f_2) J$ belongs to  $W^* (\Gamma, \nu, \mu)^\prime$. 
The space $J \lambda(C_c(\Gamma)) J \psi_0$ is equal to
$\lambda(C_c(\Gamma)) \psi_0$ and dense in $L^2(\Gamma, \mu \circ \nu)$.
Since $J \lambda(C_c(\Gamma)) J \subseteq 
W^* (\Gamma, \nu, \mu)^\prime$,
the vector $\psi_0$ is
cyclic for $W^* (\Gamma, \nu, \mu)^\prime$.
It follows that $\psi_0$ is a separating vector
for $W^* (\Gamma, \nu, \mu)$.
\end{proof}

The C$^*$-algebra $C(\Gamma(0))$ is a unital $*$-subalgebra of $C_c(\Gamma)$.
Let $A$ be the algebra $\Phi(C(\Gamma(0))) \subseteq \ell_\infty X$.
By condition (\ref{TheoremGroupoidUnits}) in Theorem
\ref{TheoremGroupoid}, $A$ is isomorphic to $C(\Gamma(0))$.
We denote by $B$ the closure of $\Phi(C_c(\Gamma))$.
Let $E$ be the conditional expectation from $\mathbb{B}(\ell_\infty X)$ onto $\ell_\infty X$.
Again by condition (\ref{TheoremGroupoidUnits}), 
we have the equation $\Phi(f |_{\Gamma(0)}) = E(\Phi(f))$, 
$f \in C_c(\Gamma)$.
It follows that $A$ is a subalgebra of $B$ and equals to $E(B)$.
The limit measure $\mu$ on $\Gamma(0)$ gives 
a state $\theta$ on $A$. We extend the state $\theta$ to $B$ by $\theta \circ E \in B^*$.
Let $(\pi_\theta, \mathcal{H}_\theta, \xi_\theta)$ be 
the GNS triple of the state $\theta \in B^*$, namely,
\begin{eqnarray*}
\langle \pi_\theta(b) \xi_\theta, \xi_\theta \rangle = \theta(b), \quad b \in B, \quad
\overline{ \pi_\theta(B) \xi_\theta} = \mathcal{H}_\theta.
\end{eqnarray*}
This construction of the trace $\theta \in B^*$ is the same as 
that in Subsection \ref{SubsectionAmeTr}. 
By the last paragraph of Subsection \ref{SubsectionAmeTr},
the trace  $\theta \in B^*$ is amenable.

\begin{lemma}\label{LemmaQuotient}
The Hilbert spaces $\mathcal{H}_\theta$ and $L^2(\Gamma, \mu \circ \nu)$
are isometric by the correspondence
$U \colon \pi_\theta(\Phi(f)) \xi_\theta \mapsto 
\lambda(f)\psi_0, \ f \in C_c(\Gamma)$.
The equality 
$U \pi_\theta(\Phi(f)) U^* 
= 
\lambda (f)$ holds.
\end{lemma}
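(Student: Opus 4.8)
The plan is to recognize this as a comparison of two cyclic $*$-representations of the $*$-algebra $C_c(\Gamma)$: on one side the GNS representation $\pi_\theta \circ \Phi$ with cyclic vector $\xi_\theta$, and on the other the regular representation $\lambda$ with cyclic vector $\psi_0$. Since $B$ is the norm closure of $\Phi(C_c(\Gamma))$ and $\pi_\theta$ is bounded, the set $\pi_\theta(\Phi(C_c(\Gamma)))\xi_\theta$ is dense in $\mathcal{H}_\theta = \overline{\pi_\theta(B)\xi_\theta}$; likewise $\lambda(C_c(\Gamma))\psi_0$ is dense in $L^2(\Gamma, \mu \circ \nu)$. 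Thus it suffices to prove that the prescription $U\colon \pi_\theta(\Phi(f))\xi_\theta \mapsto \lambda(f)\psi_0$ preserves inner products on these dense subspaces. Once that is established, well-definedness is automatic (apply the inner-product identity to $f - g$), and $U$ extends to the asserted surjective isometry.

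First I would record two facts about the regular representation. For $h \in C_c(\Gamma)$, the only factorization $\gamma = \gamma'\gamma''$ with $\gamma'' \in \Gamma(0)$ is $\gamma' = \gamma$, $\gamma'' = s(\gamma)$, so the defining formula for $\lambda$ gives $\lambda(h)\psi_0 = h$ as an element of $L^2(\Gamma, \mu\circ\nu)$. Moreover $t^{-1}(\alpha) \cap \Gamma(0) = \{\alpha\}$ for every unit $\alpha$, so the composite measure $\mu\circ\nu$ restricts to $\mu$ on $\Gamma(0)$, whence $\langle \lambda(h)\psi_0, \psi_0\rangle = \int_{\Gamma(0)} h\, d\mu$. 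Since $\lambda$ is a $*$-representation, $\langle \lambda(f)\psi_0, \lambda(f')\psi_0\rangle = \langle \lambda((f')^{*}f)\psi_0, \psi_0\rangle = \int_{\Gamma(0)} (f')^{*}f\, d\mu$, where $(f')^{*}f$ denotes the product in $C_c(\Gamma)$.

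On the GNS side, because $\Phi$ is a unital $*$-homomorphism we have $\Phi(f')^{*}\Phi(f) = \Phi((f')^{*}f)$, so $\langle \pi_\theta(\Phi(f))\xi_\theta, \pi_\theta(\Phi(f'))\xi_\theta\rangle = \theta(\Phi((f')^{*}f))$. The state $\theta = \theta \circ E$ satisfies $\theta(\Phi(h)) = \int_{\Gamma(0)} h\, d\mu$ for $h \in C_c(\Gamma)$: indeed $E(\Phi(h)) = \Phi(h|_{\Gamma(0)})$, and under the identification of $A$ with $C(\Gamma(0))$ the restriction $\theta|_A$ is integration against the limit measure $\mu$. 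Hence both inner products equal $\int_{\Gamma(0)} (f')^{*}f\, d\mu$, which proves that $U$ is inner-product preserving and extends to a unitary $U\colon \mathcal{H}_\theta \to L^2(\Gamma, \mu\circ\nu)$.

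Finally, the intertwining identity $U\pi_\theta(\Phi(f))U^{*} = \lambda(f)$ is obtained by evaluating both sides on the dense set $\lambda(C_c(\Gamma))\psi_0$: using multiplicativity of $\Phi$ and of $\lambda$, one has $U\pi_\theta(\Phi(f))\pi_\theta(\Phi(f'))\xi_\theta = U\pi_\theta(\Phi(ff'))\xi_\theta = \lambda(ff')\psi_0 = \lambda(f)\lambda(f')\psi_0 = \lambda(f)\,U\pi_\theta(\Phi(f'))\xi_\theta$. The step I expect to demand the most care is the measure-theoretic bookkeeping of the second paragraph, namely checking that $\mu\circ\nu$ restricts to $\mu$ on $\Gamma(0)$ and that the counting-measure Haar system interacts with $\psi_0$ so that convolution products evaluated at units reproduce $\theta \circ \Phi$; once these identifications are in place, the remainder is formal.
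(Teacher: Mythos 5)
Your proposal is correct and follows essentially the same route as the paper: both reduce everything to the single identity $\theta(\Phi(h)) = \int_{\Gamma(0)} h|_{\Gamma(0)}\, d\mu$, use it to match the inner products of $\pi_\theta(\Phi(f))\xi_\theta$ and $\lambda(f)\psi_0$ via $\Phi(f_2)^*\Phi(f_1) = \Phi(f_2^* f_1)$, and then obtain the intertwining relation by testing against the dense subspace $\lambda(C_c(\Gamma))\psi_0$. Your extra remarks on why $\lambda(h)\psi_0 = h$ and why $\mu\circ\nu$ restricts to $\mu$ on $\Gamma(0)$ only make explicit bookkeeping that the paper leaves implicit.
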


\begin{proof}
Note that the state $\theta$ satisfies
\begin{eqnarray*}
\theta(\Phi(f))
=
\theta(E(\Phi(f)))
=
\theta \circ \Phi(f |_{\Gamma(0)})
=
\int_{\Gamma(0)} f|_{\Gamma(0)} \ d \mu,
\quad f \in C_c(\Gamma).
\end{eqnarray*}
For $f_1, f_2 \in C_c(\Gamma)$, we compute
\begin{eqnarray*}
\langle \pi_\theta(\Phi(f_1)) \xi_\theta, 
\pi_\theta(\Phi(f_2)) \xi_\theta \rangle_{\mathcal{H}_\theta}
=
\theta(\Phi(f_2^* f_1))
=
\int_{\Gamma(0)} (f_2^* f_1)|_{\Gamma(0)} \ d \mu
=
\langle f_1, f_2 \rangle_{L^2(\Gamma, \mu \circ \nu)}.
\end{eqnarray*}
Since $\{\pi_\theta(\Phi(f)) \xi_\theta\} \subseteq \mathcal{H}_\theta$ and 
$\{f \} \subseteq L^2(\Gamma, \mu \circ \nu)$
span dense subspaces, $U$ defines a unitary operator.
For $f, f_1, f_2 \in C_c(\Gamma)$, we have
\begin{eqnarray*}
\langle \pi_\theta(\Phi(f)) U^* f_1, 
                       U^* f_2 \rangle_{L^2(\Gamma, \mu \circ \nu)}
&=& \langle \pi_\theta(\Phi(f_2^* f f_1)) \xi_\theta, 
          \xi_\theta \rangle_{\mathcal{H}_\theta} \\
&=& \theta(\Phi(f_2^* f f_1))\\
&=& \int_{\Gamma(0)} (f_2^* f f_1)|_{\Gamma(0)} \ d \mu \\
&=& \langle \lambda(f) f_1, 
                     f_2 \rangle_{L^2(\Gamma, \mu \circ \nu)}.
\end{eqnarray*}
It follows that $U \pi_\theta(\Phi(f)) U^* 
= 
\lambda (f)$.
\end{proof}

By Lemma \ref{LemmaQuotient}, we obtain the identification 
of $\pi_\theta(B)$ and $C^*_{\mathrm{red}}(\Gamma, \nu, \mu)$.

\begin{proposition}\label{PropositionAmenableTrace}
The trace $\tau$ of $C^*_{\mathrm{red}} (\Gamma, \nu, \mu)$ 
is amenable if $B$ is locally reflexive.
\end{proposition}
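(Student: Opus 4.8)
The plan is to deduce this proposition almost directly from Proposition \ref{PropositionQuotient}, transporting its conclusion along the unitary $U$ furnished by Lemma \ref{LemmaQuotient}. Recall from the end of Subsection \ref{SubsectionAmeTr} (and the paragraph preceding Lemma \ref{LemmaQuotient}) that $\theta$ is an amenable trace of the unital C$^*$-algebra $B$. Since $B$ is assumed to be locally reflexive, Proposition \ref{PropositionQuotient} applies and shows that the vector state $\langle \cdot\, \xi_\theta, \xi_\theta \rangle$ on $\pi_\theta(B)$ is an amenable trace.

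Next I would identify this vector state with $\tau$ under $U$. Because $\Phi$ is unital and $\psi_0$ is the unit of $C_c(\Gamma)$, we have $\Phi(\psi_0) = 1$, hence $\xi_\theta = \pi_\theta(\Phi(\psi_0))\xi_\theta$; applying $U$ and using Lemma \ref{LemmaQuotient} gives $U\xi_\theta = \lambda(\psi_0)\psi_0 = \psi_0$, since $\lambda(\psi_0)$ is the identity of $C^*_{\mathrm{red}}(\Gamma, \nu, \mu)$. Consequently, for every $a \in \pi_\theta(B)$ we have $\langle a\,\xi_\theta, \xi_\theta\rangle = \langle U a U^*\,\psi_0, \psi_0\rangle$, so the $*$-isomorphism $\mathrm{Ad}(U)\colon \pi_\theta(B)\to C^*_{\mathrm{red}}(\Gamma, \nu, \mu)$ carries the vector state $\langle \cdot\, \xi_\theta, \xi_\theta\rangle$ precisely onto $\tau = \langle \cdot\, \psi_0, \psi_0\rangle$.

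Finally, amenability of a trace is an isomorphism invariant: by the remark following Definition \ref{DefinitionAmeTr} (Arveson's extension theorem), the notion does not depend on the choice of faithful representation, and therefore a $*$-isomorphism sends amenable traces to amenable traces. Combining this with the previous two paragraphs yields that $\tau$ is an amenable trace of $C^*_{\mathrm{red}}(\Gamma, \nu, \mu)$. I do not expect a genuine obstacle here, as every ingredient is a citation of a preceding result; the only point requiring a moment of care is the identity $U\xi_\theta = \psi_0$, which is what makes the transported state equal to $\tau$ exactly rather than merely unitarily conjugate to it.
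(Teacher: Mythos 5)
Your proposal is correct and follows essentially the same route as the paper: the paper likewise invokes the amenability of $\theta$ from Subsection \ref{SubsectionAmeTr}, applies Proposition \ref{PropositionQuotient} using local reflexivity of $B$, and uses the identification of $\pi_\theta(B)$ with $C^*_{\mathrm{red}}(\Gamma,\nu,\mu)$ from Lemma \ref{LemmaQuotient} (writing $\theta = \tau\circ\pi_\theta$). You merely make explicit the verification $U\xi_\theta = \psi_0$, which the paper leaves implicit.
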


\begin{proof}
By the last paragraph of Subsection \ref{SubsectionAmeTr},
the tracial state $\theta$ of $B$ is amenable. The trace $\theta$ equals to 
$\tau \circ \pi_\theta$.
By Proposition \ref{PropositionQuotient}, we obtain amenability of $\tau$.
\end{proof}
Among several equivalent conditions for amenability of 
$\tau$,
we only use the following:
There exists a unital completely positive map
$\Psi \colon \mathbb{B} (L^2(\Gamma, \mu \circ \nu)) 
\rightarrow W^*(\Gamma, \nu, \mu)$
whose restriction on 
$C^*_\mathrm{red} (\Gamma, \nu, \mu)$
is the identity map.

\subsection{Proof of the main theorem}

For the first half of the proof, we use an idea by Popa \cite[Lemma 4.2]{Popa:NotesOnCartan}. 
Connes, Feldman, and Weiss proved that the hyperfinte factor of type II$_1$ has a unique Cartan subalgebra up to automorphism (\cite{ConnesFeldmanWeiss}). 
Popa gave a proof of the uniqueness theorem,
making use of
hypertrace.
In this paper, the state 
$\tau \circ \Psi 
\in \mathbb{B}(L^2(\Gamma, \mu \circ \nu))^*$ 
plays the role of hypertrace.

\begin{proposition}\label{PropositionLimInf}
Let $X = \bigsqcup_{m = 1}^\infty X_m$ be a generalized box space and let $L = \{ \Delta_X = \phi(0), \phi(1), \cdots, \phi(k)\}$
be label on a controlled set $T$ of $X$.
Let $\Gamma$ be a groupoid in Theorem \ref{TheoremGroupoid}
and let $\mu$ be a limit measure on the units $\Gamma(0)$.
If the trace $\tau$ of 
$C^*_\mathrm{red} (\Gamma, \nu, \mu)$ is amenable, then $X$ and $T$ satisfy 
the following condition:
for every positive number $\epsilon > 0$, there exists a controlled set $F$,
\begin{eqnarray*}
\liminf_{m \rightarrow \infty} \left(
\inf \left\{ \frac{ \sharp(T[Y]) }{ \sharp(Y) } \ \colon\ 
Y \subseteq X_m, Y \textrm{\ is\ an } 
F \textrm{-bounded\ set} \right\} \right) < 1 + \epsilon.
\end{eqnarray*}
\end{proposition}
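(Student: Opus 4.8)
The plan is to transport Popa's hypertrace argument \cite{Popa:NotesOnCartan} into the measured groupoid $(\Gamma,\nu,\mu)$ and then read off a F\o lner set inside a single component $X_m$. Since $\tau$ is amenable, the unital completely positive map $\Psi\colon\mathbb{B}(L^2(\Gamma,\mu\circ\nu))\to W^*(\Gamma,\nu,\mu)$ recorded just after Proposition \ref{PropositionAmenableTrace} yields a hypertrace $\phi=\tau\circ\Psi$ on $\mathbb{B}(L^2(\Gamma,\mu\circ\nu))$. As $\Psi$ fixes $C^*_\mathrm{red}(\Gamma,\nu,\mu)$ pointwise and $\tau$ is tracial, a one-line computation with unitaries gives the trace relation $\phi(xa)=\phi(ax)$ for all $x\in C^*_\mathrm{red}(\Gamma,\nu,\mu)$ and all $a\in\mathbb{B}(L^2(\Gamma,\mu\circ\nu))$.

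The operators in play are the finitely many partial isometries $\lambda(\psi_1),\dots,\lambda(\psi_k)$ attached to the generating bisections $\Gamma(1),\dots,\Gamma(k)$, together with the projections $\lambda(1_Z)\in C^*_\mathrm{red}(\Gamma,\nu,\mu)$ indexed by clopen $Z\subseteq\Gamma(0)$, for which $\tau(\lambda(1_Z))=\mu(Z)$. I would apply the Namioka--Day convexity trick as in \cite{Popa:NotesOnCartan}: writing $\phi$ as a weak-$*$ limit of normal states with finite-rank densities and exploiting the trace relation for $x=\lambda(\psi_i)$, suitable convex combinations of these densities converge in trace norm to positive finite-rank operators that are approximately $\mathrm{Ad}(\lambda(\psi_i))$-invariant in $\|\cdot\|_1$ for each $i$. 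A layer-cake passage to spectral projections then produces a projection $P$ with $\sum_{i=1}^k\|\lambda(\psi_i)P-P\lambda(\psi_i)\|_{\tau,2}<\epsilon\,\|P\|_{\tau,2}$. Because $\mu$ is an accumulation point of the atomic measures $\mu_m$, and the densities can be localized within a fixed word length $n$ in the generators, this projection can be realized, for arbitrarily large $m$, as the characteristic function $\chi_Y$ of a finite set $Y\subseteq X_m$: here I would use Lemma \ref{LemmaPushForward} to identify $L^2(\Gamma,\mu_m\circ\nu)$ with $\ell_2((X_m,T)^{(2)})$, and Lemma \ref{LemmaQuotient} to recognize $\lambda(\psi_i)$ as $\Phi(\psi_i)$, whose compression to $\ell_2 X_m$ implements the partial bijection $\phi(i)$.

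It then remains to convert the operator estimate into the combinatorial one. Since $T=\Delta_X\cup\bigcup_{i=1}^k\phi(i)$, the characteristic function of $T[Y]$ is dominated by $\chi_Y+\sum_{i=1}^k\Phi(\psi_i)\chi_Y$, so the approximate invariance of $\chi_Y$ (via the commutator bounds above, since $(1-P_Y)\Phi(\psi_i)P_Y=(1-P_Y)[\Phi(\psi_i),P_Y]$) forces $\sharp(T[Y])\le(1+\epsilon')\sharp(Y)$ with $\epsilon'\to 0$ as $\epsilon\to 0$; choosing the controlled set $F=T^{\circ n}$, where $n$ is the localization length above, makes the produced $Y$ automatically $F$-bounded, and letting $m\to\infty$ gives the stated $\liminf<1+\epsilon$. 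I expect the main obstacle to be exactly this step: running the convexity argument so that the F\o lner projection is simultaneously (i) an honest subset projection $\lambda(1_Y)$ with $Y$ contained in one finite component $X_m$ rather than a general element of $W^*(\Gamma,\nu,\mu)$, and (ii) supported within a word length $n$ that is uniform in $m$, so that a single $F$ serves all large $m$. Reconciling the weak-$*$ nature of the hypertrace with these locality and finiteness demands, using $\mu=\lim_m\mu_m$ together with the inverse-invariance of $\mu\circ\nu$ from Lemma \ref{LemmaInvInv}, is where the real work lies.
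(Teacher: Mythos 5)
You have the right starting point (the hypertrace $\tau\circ\Psi$ coming from amenability of $\tau$, the Namioka--Day convexity trick, a layer-cake passage to a F\o lner object, and the eventual localization to a component via $\mu=\lim_m\mu_m$), but the step you yourself flag as ``where the real work lies'' is a genuine gap, and the paper closes it by a different reduction than the one you sketch. You run the convexity argument with finite-rank density operators on $\mathbb{B}(L^2(\Gamma,\mu\circ\nu))$ and hope that the resulting spectral projections can be ``realized'' as characteristic functions $\chi_Y$ of finite subsets of a single $X_m$. There is no mechanism for that: the measure $\mu\circ\nu$ is in general non-atomic, finite-rank projections on $L^2(\Gamma,\mu\circ\nu)$ bear no relation to subsets of $\Gamma$ (let alone of $X_m$), and almost-commutation with the $\lambda(\psi_i)$ does not by itself push a projection into the multiplication algebra. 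The paper avoids this entirely: it restricts $\Psi$ to the \emph{commutative} algebra $L^\infty(\Gamma,\mu\circ\nu)$, so that $\rho=\tau\circ\Psi|_{L^\infty}$ is an invariant mean for the partial transformations $\widetilde{\psi_i}\colon\gamma\mapsto\gamma'\gamma$ (the left translations by the bisections $\Gamma(i)$, acting on the total space of the groupoid, not on the unit space). Day's argument is then run in $L^1(\Gamma,\mu\circ\nu)$ of an ordinary measure space, where a level set of an almost-invariant $L^1$ function \emph{is} a subset $\Lambda\subseteq\Gamma$, which can moreover be taken compact and open because the topology is generated by countably many clopen sets. That is the missing idea.

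The second omission is the mechanism producing an $F$-bounded $Y$ and a controlled $F$ simultaneously. In the paper the F\o lner set $\Lambda$ lives in $\Gamma$ itself, so its pullback $F=(\Phi^*)^{-1}(\Lambda)$ is a controlled subset of $X^2$ (controlled because compactness of $\Lambda$ forces $\Lambda\subseteq\bigcup_{g\in I^{\le n}}\Gamma(g)$, hence $F\subseteq (T\cup T^{-1})^{\circ n}$), and by Lemma \ref{LemmaInverseImage} the pullback of $\bigcup_i\widetilde{\psi_i}(\Lambda)\setminus\Lambda$ is $T\circ F\setminus F$. Writing $F\cap X_{m(l)}^2=\bigsqcup_{y}F[y]\times\{y\}$ and averaging over the source fibres then yields a single ball $Y=F[y]$ with $\sharp(T[Y]\setminus Y)<\epsilon\,\sharp(Y)$, which is $F$-bounded by construction. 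Your proposal instead guesses $F=T^{\circ n}$ and asserts the produced $Y$ is automatically $F$-bounded, but since your $Y$ is extracted from a spectral projection rather than from a controlled set, nothing in your argument bounds its diameter. To repair the proof you should replace the finite-rank/operator-norm version of Popa's argument with the $L^\infty$--$L^1$ version on the groupoid, and extract $Y$ as a source-fibre slice of the pulled-back F\o lner set.
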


\begin{proof}
For 
$i= 0, 1, \cdots, k$,
define a homeomorphism $\widetilde{\psi_i}$ by
the product of $\Gamma(i)$
\begin{eqnarray*}
\widetilde{\psi_i} 
\colon
t^{-1}(s(\Gamma(i)))
\longrightarrow
t^{-1}(t(\Gamma(i)))
\colon
\gamma
\mapsto
\gamma^\prime \gamma,
\end{eqnarray*}
where $\gamma^\prime$ is the unique element of 
$s^{-1}(\{t(\gamma)\}) \cap \Gamma(i)$. 
The map $\widetilde{\psi_i}$ is well-defined and homeomorphic, by condition (\ref{TheoremGroupoidInverseImagek})
in Theorem \ref{TheoremGroupoid}.
A Borel subset 
$\Lambda \subseteq \Gamma$ is said to be an
$\epsilon$-F\o lner set with respect to 
$\{ \widetilde{\psi_i}\}$ if
$0 < \mu \circ \nu(\Lambda) < \infty$ and
\begin{eqnarray*}
\sum_{i = 0}^k \mu \circ \nu 
\left(
\bigcup_{i = 0}^k 
\widetilde{\psi_i} 
\left(
\Lambda
\right) 
\setminus
\Lambda
\right)
< \epsilon\ \mu \circ \nu(\Lambda).
\end{eqnarray*}
We first prove that there exists an $\epsilon$-F\o lner set of these transformations.  

Since the trace 
$\tau$ on $C^*_\mathrm{red} (\Gamma, \nu, \mu)$ 
is amenable,
there exists a unital completely positive map
$\Psi \colon \mathbb{B} (L^2(\Gamma, \mu \circ \nu)) 
\rightarrow W^*(\Gamma, \nu, \mu)$
whose restriction on 
$C^*_\mathrm{red} (\Gamma, \nu, \mu)$
is the identity map.
It follows that the map $\Psi$ has the $C^*_\mathrm{red} (\Gamma, \nu, \mu)$-bimodule property.
Define a linear functional $\rho$ on 
$L^\infty(\Gamma, \mu \circ \nu)$ by the composition of 
$\tau$ and $\Psi |_{L^\infty(\Gamma, \mu \circ \nu)}$, 
namely,
$\rho(\zeta) = \langle \Psi(\zeta) \psi_0, \psi_0 \rangle _{L^2(\Gamma, \mu \circ \nu)}$. 
By the bimodule property of $\Psi$ and the trace property of $\tau$, for every $\zeta \in L^\infty(\Gamma, \mu \circ \nu)$,
we have
\begin{eqnarray*}
\rho(\lambda(\psi_i)^* \zeta \lambda(\psi_i)) 
&=&  
\tau (\lambda(\psi_i)^* \Psi(\zeta) \lambda(\psi_i))\\
&=& 
\tau (\lambda(\psi_i \psi_i^*) \Psi(\zeta))\\
&=& 
\tau (\Psi(\lambda(\psi_i \psi_i^*) \zeta))\\
&=&
\rho(\lambda(\psi_i \psi_i^*) \zeta) .
\end{eqnarray*}
Note 
that 
$\lambda(\psi_i \psi_i^*) \zeta 
\in L^\infty(\Gamma, \mu \circ \nu)$ 
is the restriction of $\zeta$ to 
the range of 
$\widetilde{\psi_i}$ and
that $\lambda(\psi_i)^* \zeta \lambda(\psi_i) \in L^\infty(\Gamma, \mu \circ \nu)$
is a translation of $\lambda(\psi_i \psi_i^*) \zeta$ by $\widetilde{\psi_i}^{-1}$.
The above equality is rephrased as follows:
$\rho ( \zeta \circ \widetilde{\psi_i} ) 
=
\rho(\zeta),\ 
\zeta \in L^\infty ( \mathrm{Image} \, \widetilde{\psi_i} )$.

Take a net $\{ \eta_j \} \subseteq L^1(\Gamma, \mu \circ \nu)$ of positive $L^1$-functions with norm $1$ 
which converges to $\rho \in L^\infty(\Gamma, \mu \circ \nu) ^*$ in the weak$^*$-topology. 
For every $\zeta \in L^\infty ( \mathrm{Image}\, \widetilde{\psi_i} )$,
we have
\begin{eqnarray*}
\lim_j \int_{\mathrm{Image}\, \widetilde{\psi_i}}
\left( \eta_j \circ \widetilde{\psi_{i}}^{-1} \right) 
\zeta d \mu \circ \nu
&=&
\lim_j \int_{\mathrm{Dom}\, \widetilde{\psi_i}}
\eta_j \left( \zeta \circ \widetilde{\psi_i} \right) 
d \mu \circ \nu \\
&=&
\rho \left( \zeta \circ \widetilde{\psi_i} \right) \\ 
&=&
\rho(\zeta) \\
&=&
\lim_j \int_{\mathrm{Image}\, \widetilde{\psi_i}}
\eta_j \zeta d \mu \circ \nu.
\end{eqnarray*}
It follows that $\eta_j \circ \widetilde{\psi_i}^{-1} 
- \left( \eta_j \left|_{\mathrm{Image}\, \widetilde{\psi_i}} \right.\right) \in L^1\left( \mathrm{Image}\, \widetilde{\psi_i} \right)$ converges to $0$ 
in the weak topology for every $i \in I$.  
By the Hahn--Banach theorem, for an arbitrary positive number $\epsilon$, there exists a convex combination $\eta$ of $\{ \eta_j \}$ such that 
\begin{eqnarray*}
\sum_{i = 0}^k 
\left\| \eta \circ \widetilde{\psi_i}^{-1} - 
\left(\eta \left|_{\mathrm{Image}\, \widetilde{\psi_i}} \right.\right) \right\| _{L^1\left( \mathrm{Image}\, \widetilde{\psi_i} \right)}
< \epsilon = \epsilon \| \eta \|_{L^1(\Gamma, \mu \circ \nu)}.
\end{eqnarray*}
It follows that there exists a level set $\Lambda$ of $\eta$ satisfying that
\begin{eqnarray*}
\sum_{i = 0}^k 
\mu \circ \nu \left(\widetilde{\psi_i} \left(
\Lambda 
\right) 
 \bigtriangleup \left(\Lambda \cap \mathrm{Image}\, \widetilde{\psi_i} \right) \right)
< \epsilon \mu \circ \nu (\Lambda).
\end{eqnarray*}
By the inclusion
\begin{eqnarray*}
\bigcup_{i = 0}^k \widetilde{\psi_i} 
\left(
\Lambda 
\right) 
\setminus
\Lambda
\subseteq
\bigcup_{i = 0}^k 
\widetilde{\psi_i} \left(
\Lambda
\right) 
\bigtriangleup 
\left( \Lambda \cap \mathrm{Image}\, \widetilde{\psi_i} \right),
\end{eqnarray*}
we conclude that $\Lambda$ is an $\epsilon$-F\o lner set.

By conditions (\ref{TheoremGroupoidClopen}) and (\ref{TheoremGroupoidGenerator})
in Theorem \ref{TheoremGroupoid}, the topology of $\Gamma$ is generated by countably many compact open subsets.
We may assume that the F\o lner set $\Lambda$ is compact and open.
On the compact subset $\bigcup_{i = 0}^k \widetilde{\psi_i} 
\left( \Lambda \right)$, 
the measure $\mu \circ \nu$ is an accumulation point of 
$\{\mu_m \circ \nu\}_{m \in \mathbb{N}}$. 
Since the characteristic function of $\bigcup_{i = 0}^k \widetilde{\psi_i} \left( \Lambda \right) \setminus \Lambda$ is continuous, 
there exist infinitely many $m(l)$ satisfying the inequality
\begin{eqnarray*}
\mu_{m(l)} \circ \nu \left(
\bigcup_{i = 0}^k \widetilde{\psi_i} 
\left(
\Lambda 
\right) 
\setminus
\Lambda 
\right)
< 
\epsilon\ \mu_{m(l)} \circ \nu (\Lambda).&
\end{eqnarray*}

Let $F \subseteq (X, T)^{(2)}$ be the inverse image of $\Lambda$ with respect to the map $\Phi^*$.
By Lemma \ref{LemmaInverseImage},
the inverse image of
$\bigcup_{i = 0}^k \widetilde{\psi_i} (\Lambda) \setminus \Lambda$ with respect to $\Phi^*$ is 
\begin{eqnarray*}
\bigcup_i  \phi(i) \circ F \setminus F = T \circ F \setminus F.
\end{eqnarray*}
Let $w_{m(l)}$ be the normalized counting measure 
on $X_{m(l)}$. 
Since the measure $\mu_{m(l)} \circ \nu$ is the push forward of $w_{m(l)} \circ c$, we have
$w_{m(l)} \circ c (T \circ F \setminus F)
< 
\epsilon\ w_{m(l)} \circ c (F)$.
By multiplying $\sharp(X_{m(l)})$, we also have
\begin{eqnarray*}
\sharp 
((T \circ F \setminus F)  \cap X_{m(l)}^2) 
< 
\epsilon\ \sharp (F \cap X_{m(l)}^2).
\end{eqnarray*}
By the equalities
\begin{eqnarray*}
F \cap X_{m(l)}^2 &=& 
\bigsqcup_{y \in X_{m(l)}} F[y] \times \{y\},\\
(T \circ F \setminus F)  \cap X_{m(l)}^2 
&=& \bigsqcup_{y \in X_{m(l)}} 
\left( T [F[y]] 
\setminus F[y] \right) \times \{y\},
\end{eqnarray*}
there exists an $F$-ball $Y_{m(l)} = F[y]$ satisfying the following F{\o}lner condition:
\begin{eqnarray*}
\sharp \left(
T[Y_{m(l)}] \setminus Y_{m(l)}
\right)
< 
\epsilon\ \sharp (Y_{m(l)}).
\end{eqnarray*}
Thus we get the following inequality:
\begin{eqnarray*}
\liminf_{m \rightarrow \infty} \left(
\inf \left\{ \frac{ \sharp(T[Y]) }{ \sharp(Y) } \ \colon\ 
Y \subseteq X_m, Y \textrm{\ is\ an } 
F \textrm{-bounded\ set} \right\} \right) 
< 1 + \epsilon.
\end{eqnarray*}
Since $\Lambda$ is a compact open subset of $\Gamma$, there exists $n \in \mathbb{N}$ such that $\Lambda \subseteq \bigcup_{g \in I^n} \Gamma(g)$.
Taking the inverse images with respect to $\Phi^*$, we have $F \subseteq (T \cup T^{-1})^{\circ n}$. 
It follows that the subset $F \subseteq X^2$ is controlled.
\end{proof}

\begin{theorem}\label{TheoremLocalReflexivity}
If $X = \bigsqcup_{m = 1}^\infty X_m$ 
be a sequence of weak expander spaces,
then the uniform Roe algebra $C^*_\mathrm{u}(X)$ is not locally reflexive.
\end{theorem}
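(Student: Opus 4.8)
The plan is to argue by contradiction, assembling the machinery of the previous sections so that local reflexivity of $C^*_\mathrm{u}(X)$ forces a violation of the weak expander inequality in Definition \ref{DefinitionWeakExpander}. Suppose that $C^*_\mathrm{u}(X)$ is locally reflexive. Since $X$ is a sequence of weak expander spaces, fix a controlled set $T$ and a positive number $c$ as in Definition \ref{DefinitionWeakExpander}; enlarging $T$ if necessary, we may assume $\Delta_X \subseteq T$, which only increases $\sharp(T[Y])$ and hence preserves the defining inequality. Because $X$ is uniformly locally finite, $T$ admits a finite label $L = \{\Delta_X = \phi(0), \phi(1), \dots, \phi(k)\}$, and Theorem \ref{TheoremGroupoid} supplies the associated \'etale groupoid $\Gamma$ together with the homomorphism $\Phi^*$ and the compact open sets $\Gamma(1), \dots, \Gamma(k)$.

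Next I would run the operator-algebraic reduction. Let $B = B_L \subseteq C^*_\mathrm{u}(X)$ be the unital C$^*$-subalgebra generated by the label, and let $\mu$ be a limit measure on $\Gamma(0)$ obtained as in Subsection \ref{SubsectionMeasuredGroupoid}. By the first assertion of Proposition \ref{PropositionLocalSplit}, local reflexivity of $C^*_\mathrm{u}(X)$ passes to its C$^*$-subalgebra $B$. Proposition \ref{PropositionAmenableTrace} then shows that the canonical trace $\tau$ on $C^*_\mathrm{red}(\Gamma, \nu, \mu)$ is amenable. At this point the measured groupoid has the F\o lner property recorded in Proposition \ref{PropositionLimInf}.

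Finally I would derive the contradiction by matching quantifiers. Applying Proposition \ref{PropositionLimInf} with $\epsilon = c$, I obtain a controlled set $F$ for which
\begin{eqnarray*}
\liminf_{m \rightarrow \infty} \left(
\inf \left\{ \frac{ \sharp(T[Y]) }{ \sharp(Y) } \ \colon\
Y \subseteq X_m, Y \textrm{\ is\ an }
F \textrm{-bounded\ set} \right\} \right)
< 1 + c.
\end{eqnarray*}
But Definition \ref{DefinitionWeakExpander}, applied to the controlled set $\widetilde{T} = F$ with the same $T$ and $c$, asserts that this very $\liminf$ is strictly greater than $1 + c$. This is a contradiction, so $C^*_\mathrm{u}(X)$ cannot be locally reflexive.

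The main obstacle is conceptual rather than computational: essentially all the labor is already carried out in Theorem \ref{TheoremGroupoid} and in Propositions \ref{PropositionAmenableTrace} and \ref{PropositionLimInf}, so the delicate point here is bookkeeping. One must keep the \emph{same} controlled set $T$ and constant $c$ throughout, using $T$ both to build the label and groupoid and in the final inequality, and then observe that the existential conclusion of Proposition \ref{PropositionLimInf} (for every $\epsilon$ there is \emph{some} $F$) directly contradicts the universal hypothesis of Definition \ref{DefinitionWeakExpander} (for \emph{every} $\widetilde{T}$) precisely when $\epsilon$ is taken equal to $c$.
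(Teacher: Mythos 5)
Your proposal is correct and follows essentially the same route as the paper: pass local reflexivity to the labelled subalgebra $B$, invoke Proposition \ref{PropositionAmenableTrace} to get amenability of the trace on $C^*_\mathrm{red}(\Gamma,\nu,\mu)$, and then play Proposition \ref{PropositionLimInf} against Definition \ref{DefinitionWeakExpander}. The only cosmetic difference is that the paper runs the argument for an arbitrary controlled set $T$ and concludes that $X$ is not a weak expander sequence, whereas you fix the $T$ and $c$ from the definition and take $\epsilon = c$ to obtain a direct contradiction; these are the same argument.
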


\begin{proof}
Suppose that the uniform Roe algebra $C^*_\mathrm{u}(X)$ is locally reflexive.
Take a controlled set $T$ including the diagonal subset and fix a label $\{\Delta_X = \phi(0), \phi(1), \cdots, \phi(k) \}$ on $T$.
Take a groupoid $\Gamma$ in Theorem \ref{TheoremGroupoid}. The translation C$^*$-algebra $B$ associated to $\Gamma$ is also locally reflexive, 
since $C^*_\mathrm{u}(X)$ is locally reflexive.
Let $\mu$ be a limit measure on
$\Gamma(0)$ constructed in Subsection \ref{SubsectionMeasuredGroupoid}. 
By Proposition \ref{PropositionAmenableTrace}, 
the corresponding trace of the reduced C$^*$-algebra $C^*_\mathrm{red}(\Gamma, \nu, \mu)$ is amenable. 
By Proposition \ref{PropositionLimInf}, the controlled set $T$ satisfies the following:
for every positive number $\epsilon > 0$, there exists a controlled set $F$ such that
\begin{eqnarray*}
\liminf_{m \rightarrow \infty} \left(
\inf \left\{ \frac{ \sharp(T[Y]) }{ \sharp(Y) } \ \colon\ 
Y \subseteq X_m, Y \textrm{\ is\ a } 
F \textrm{-bounded\ set} \right\} \right) < 1 + \epsilon.
\end{eqnarray*}
This condition holds true for every controlled set $T$.
It follows that $X$ is not a sequence of weak expander spaces. 
\end{proof}

For a weak expander sequence 
$X = \bigsqcup_{m = 1}^\infty X_m$ consisting of finite metric spaces, we can define a metric $d$ on $X$ so that
$d(X_m, X_n)$ goes to infinity as $m, n \rightarrow \infty$.
The coarse structure of the metric space $(X, d)$ is larger than that of the generalized box space $X$.
Since the uniform Roe algebra $C^*_\mathrm{u}(X, d)$
of the metric space contains $C^*_\mathrm{u}(X)$,
the algebra in Theorem \ref{TheoremLocalReflexivity} can be replaced with $C^*_\mathrm{u}(X, d)$.

\section{Problems related to uniform local amenability}
\label{SectionULA}
Brodzki, Niblo, \v{S}pakula, Willett, and Wright introduced two kinds of uniform local amenability in their study of property A.
The following is related to weak expander spaces.

\begin{definition}[Definition 2.2 of \cite{BNSWW}]
Let $X$ be a uniformly locally finite metric space.
The space $X$ is said to have property ULA
if for every $\epsilon > 0$ and $R > 0$, 
there exists $S > 0$ satisfying the following condition:
For any finite subset $W$ of $X$, there
exists $Y \subseteq X$ such that $\mathrm{diam}(Y) \le S$ and
$\sharp(\partial_R Y \cap W) < \epsilon \sharp(Y)$
\end{definition}

The property ULA is 
the weaker form of uniform local amenability.
The stronger one is called ULA$_\mu$. 
The property ULA$_\mu$ is equivalent to property A
(Combination of \cite{BNSWW} and \cite[Theorem 4.1]{SakoONLP}). We note that
$\ \textrm{Property A} \Leftrightarrow 
\textrm{ULA}_\mu \Rightarrow
\textrm{ULA}$.

\begin{proposition}
Let $(X, d)$ be a uniformly locally finite metric space.
The space $X$ does not have property ULA
if and only if
there exists a sequence of finite subsets 
$X_m \subseteq X$ whose disjoint union $\bigsqcup_{m = 1}^\infty X_m$ is a sequence of weak expander spaces.
\end{proposition}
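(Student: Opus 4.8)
The plan is to prove both implications by translating each side into the same elementary statement about relative boundaries and then matching the quantifiers. First I would record the metric reformulation from Subsection~\ref{SubsectionExpander}: a disjoint union $\bigsqcup_m (X_m,d)$ is a weak expander sequence exactly when there are $c,R>0$ such that for every $S>0$,
\[
\liminf_{m\to\infty}\ \inf\left\{\frac{\sharp(\partial_R Y)}{\sharp(Y)}\ \colon\ \emptyset\neq Y\subseteq X_m,\ \mathrm{diam}(Y)\le S\right\}>c,
\]
using $\sharp(N_R(Y))=\sharp(Y)+\sharp(\partial_R Y)$ to pass between $N_R$ and $\partial_R$. The bridge to property ULA is the identity $\partial_R Y\cap W=\{x\in W\colon d(x,Y)\le R\}\setminus Y$, which for $Y\subseteq W$ is precisely the boundary of $Y$ computed inside the subspace $W$ with the restricted metric. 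Thus, when the test set $Y$ sits inside the witnessing region $W=X_m$, the quantity $\sharp(\partial_R Y\cap W)$ appearing in the definition of ULA equals $\sharp(\partial_R^{X_m} Y)$, the boundary used in the weak expander condition. Everything below is an exercise in matching this identity against the two quantifier patterns.

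For the ``only if'' direction I would suppose that $X$ fails ULA. Negating the definition produces $\epsilon>0$ and $R>0$ such that for every $S>0$ there is a nonempty finite $W\subseteq X$ with $\sharp(\partial_R Y\cap W)\ge\epsilon\,\sharp(Y)$ for all nonempty $Y\subseteq W$ of diameter at most $S$. I would pick a sequence $S_m\to\infty$ (say $S_m=m$), let $X_m\subseteq X$ be the witness for $S_m$, and form $\bigsqcup_m X_m$ with the coarse structure of Subsection~\ref{SubsectionExpander}. Since $X$ has bounded geometry and each $X_m$ is finite and nonempty, this is a uniformly locally finite generalized box space. By the bridge identity, for $Y\subseteq X_m$ with $\mathrm{diam}(Y)\le S\le S_m$ one gets $\sharp(\partial_R^{X_m}Y)/\sharp(Y)\ge\epsilon$, hence $\sharp(N_R(Y))/\sharp(Y)\ge 1+\epsilon$; fixing $S$ and letting $m\ge S$ shows the displayed $\liminf$ is at least $1+\epsilon>1+\epsilon/2$, so $\bigsqcup_m X_m$ is a weak expander sequence with $c=\epsilon/2$.

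For the ``if'' direction I would suppose that $X_m\subseteq X$ are finite subsets whose disjoint union is a weak expander sequence, with constants $c,R$. I would set $\epsilon=c/2$, keep the same $R$, and for a given $S>0$ use the $\liminf$ bound to pick $m$ so large that $\inf\{\sharp(N_R(Y))/\sharp(Y)\}>1+c$ over all nonempty $Y\subseteq X_m$ with $\mathrm{diam}(Y)\le S$. Taking $W=X_m$ as the witness, the bridge identity gives $\sharp(\partial_R Y\cap W)/\sharp(Y)>c>\epsilon$ for every such $Y$, so no $Y$ inside $W$ of diameter at most $S$ can satisfy the ULA inequality. As $S>0$ was arbitrary, $X$ fails ULA for this pair $(\epsilon,R)$.

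The routine points are the two conversions between $N_R$ and $\partial_R$ and the verification that $\bigsqcup_m X_m$ is a uniformly locally finite generalized box space. The only genuinely delicate point is the bookkeeping around the test sets $Y$: the definition of ULA must be read with $Y$ ranging inside the witness $W$ (otherwise a far-away singleton makes $\partial_R Y\cap W$ empty and ULA trivial), and it is exactly this restriction that makes $\partial_R Y\cap W$ agree with the subspace boundary $\partial_R^{X_m}Y$. I expect the main obstacle to be stating this matching cleanly while tracking the interplay between the ``for every $S$'' in ULA and the $\liminf_{m}$ in the definition of a weak expander sequence.
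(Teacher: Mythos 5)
Your proposal is correct and follows essentially the same route as the paper: both directions negate ULA with witness $W=X_m$, translate $\sharp(\partial_R Y\cap X_m)\ge c\,\sharp(Y)$ into $\sharp(N_R(Y)\cap X_m)/\sharp(Y)\ge 1+c$, and match the ``for every $S$'' against the $\liminf_m$ exactly as in the paper's argument. Your explicit remarks on reading the ULA test sets $Y$ as ranging inside the witness $W$ and on passing from $\ge 1+\epsilon$ to $>1+c$ via $c=\epsilon/2$ are points the paper treats implicitly, but they do not change the argument.
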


\begin{proof}
Suppose that $X$ does not have property ULA.
Then there exist $c > 0$ and $R > 0$ with the following condition: 
for every natural number $m$, there exists a finite subset $X_m \subseteq X$ 
such that every subset $Y \subseteq X_m$ with $\mathrm{diam}(Y) < m$ satisfies the inequality
$\sharp(\partial_R (Y) \cap X_m) \ge c \sharp(Y)$.
In other words, the space $(X_m, d)$ satisfies
\begin{eqnarray*}
\mathrm{inf} \left\{ \left. 
\frac{\sharp(N_R (Y) \cap X_m)}{\sharp(Y)} 
\right| Y \subseteq X_m, \mathrm{diam}(Y) \le m
\right\}
\ge 1 + c. 
\end{eqnarray*}
It follows that $\bigsqcup_{m = 1}^\infty X_m$ is a sequence of weak expander spaces.

Conversely, we suppose that there exists a sequence of finite subspaces $X_m \subseteq X$ 
such that $\bigsqcup_{m = 1}^\infty X_m$ is a sequence of weak expander spaces.
Then there exist $c > 0$ and $R > 0$ such that the following holds for every $S > 0$: 
for large enough $m \ge M_S$,
\begin{eqnarray*}
\mathrm{inf} \left\{ \left. 
\frac{\sharp(N_R (Y) \cap X_m)}{\sharp(Y)} 
\right| Y \subseteq X_m, \mathrm{diam}(Y) < S
\right\}
\ge 1 + c. 
\end{eqnarray*}
For $m \ge M_S$, there exists no finite subset $Y \subseteq X_m$ such that $\mathrm{diam}(Y) < S$ and $\sharp(\partial_R Y \cap X_m) < c \sharp(Y)$. 
It follows that $X$ does not have ULA.
\end{proof}

\begin{corollary}\label{CorollaryNotA}
Let $(X, d)$ be a uniformly locally finite metric space.
The space $X$ does not have property A if
there exists a sequence of finite subsets 
$X_m \subseteq X$ such that the disjoint union $\bigsqcup_{m = 1}^\infty X_m$ is a sequence of weak expander spaces.
\end{corollary}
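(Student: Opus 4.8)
The final statement (Corollary \ref{CorollaryNotA}) asserts that if a metric space $(X,d)$ contains a sequence of finite subsets $X_m$ whose disjoint union is a sequence of weak expander spaces, then $X$ does not have property A. The plan is to reduce this to the preceding proposition together with the already-stated chain of implications $\textrm{Property A} \Leftrightarrow \textrm{ULA}_\mu \Rightarrow \textrm{ULA}$. First I would invoke the Proposition immediately above: its hypothesis is precisely that there exists a sequence of finite subsets $X_m \subseteq X$ whose disjoint union $\bigsqcup_{m=1}^\infty X_m$ is a sequence of weak expander spaces, and its conclusion (read in the contrapositive direction) is that $X$ does not have property ULA.

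The key observation is that the Corollary's hypothesis is identical to the "conversely" hypothesis of the Proposition. So the Proposition directly yields that $X$ fails ULA. The remaining step is to upgrade "fails ULA" to "fails property A." For this I would use the implication $\textrm{Property A} \Rightarrow \textrm{ULA}$, which follows from the stated relation $\textrm{Property A} \Leftrightarrow \textrm{ULA}_\mu \Rightarrow \textrm{ULA}$. Taking the contrapositive, a space that does not have ULA cannot have property A. Composing these two steps gives the conclusion.

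Concretely, the argument runs as follows. By the Proposition, the existence of the weak-expander subsequence $\{X_m\}$ implies that $X$ does not have property ULA. Since property A implies ULA$_\mu$, and ULA$_\mu$ implies ULA, property A implies ULA; hence the failure of ULA forces the failure of property A. Therefore $X$ does not have property A, as claimed.

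I do not expect any genuine obstacle here: the Corollary is a formal consequence of the Proposition and the citation-based implication $\textrm{ULA}_\mu \Rightarrow \textrm{ULA}$ (with $\textrm{Property A} \Leftrightarrow \textrm{ULA}_\mu$ supplied by the combination of \cite{BNSWW} and \cite[Theorem 4.1]{SakoONLP}). The only point requiring minimal care is making sure the hypotheses line up exactly—namely that the Corollary's assumption matches the "conversely" branch of the Proposition verbatim, which it does. Thus the proof is essentially a one-line deduction chaining the Proposition with the stated implication between the two forms of uniform local amenability.
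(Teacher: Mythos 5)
Your proposal is correct and matches the paper's own argument: the paper likewise deduces the corollary from the preceding proposition together with the implication that property A implies ULA (which it cites directly from \cite{BNSWW}, rather than routing through ULA$_\mu$ as you do, but this is the same chain of implications already stated in the section). No gaps.
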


\begin{proof}
Property A implies property ULA (\cite[Lemma 2.7 and Proposition 3.2]{BNSWW}).
\end{proof}

We close this paper by proposing three problems.
Let $X$ be a uniformly locally finite metric space.
\begin{problem}\label{ProblemDisjoint}
Suppose that $X$ does not have property ULA.
Can one find \underline{disjoint} finite subsets 
$X_m \subseteq X$ 
such that $\bigsqcup_{m = 1}^\infty X_m$ is a sequence of weak expander spaces?
\end{problem}

\begin{problem}\label{ProblemExact}
Let $\{X_m\}_{m \in \mathbb{N}}$ be copies of $X$.
Suppose that $C^*_\mathrm{u}(X)$ is exact (resp.\ locally reflexive). Is 
$C^*_\mathrm{u}(\bigsqcup X_m)$ also exact (resp.\ locally reflexive)?
\end{problem}

If the answer of Problem \ref{ProblemDisjoint} or Problem \ref{ProblemExact} is affirmative, so is Problem \ref{ProblemExactImpliesULA}.

\begin{problem}\label{ProblemExactImpliesULA}
Suppose that 
the uniform Roe algebra $C^*_\mathrm{u}(X)$ is locally reflexive.
Does the space $X$ have property ULA? 
What about the case that $C^*_\mathrm{u}(X)$ is exact?
\end{problem}

In the case that $X$ coarsely embeds into a discrete group, Brodzki, Niblo, and Wright 
proved in \cite{PaperBrodzkiNiblo} that exactness of $C^*_\mathrm{u}(X)$ implies property A of $X$.

\bibliographystyle{amsalpha}
\bibliography{wexpander.bib}

\newcommand{\etalchar}[1]{$^{#1}$}
\def\cdprime{$''$}
\providecommand{\bysame}{\leavevmode\hbox to3em{\hrulefill}\thinspace}
\providecommand{\MR}{\relax\ifhmode\unskip\space\fi MR }
\providecommand{\MRhref}[2]{%
  \href{http://www.ams.org/mathscinet-getitem?mr=#1}{#2}
}
\providecommand{\href}[2]{#2}
\begin{thebibliography}{BdlHV08}

\bibitem[AG{\v{S}}12]{ArzhantsevaGuentnerSpakula}
G.~Arzhantseva, E.~Guentner, and J.~{\v{S}}pakula, \emph{Coarse non-amenability
  and coarse embeddings}, Geom. Funct. Anal. \textbf{22} (2012), no.~1, 22--36.

\bibitem[BdlHV08]{BekkaDelaHarpeValette}
B.~Bekka, P.~de~la Harpe, and A.~Valette, \emph{Kazhdan's property ({T})}, New
  Mathematical Monographs, vol.~11, Cambridge University Press, Cambridge,
  2008.

\bibitem[BN{\v{S}}{\etalchar{+}}12]{BNSWW}
J.~Brodzki, G.~A. Niblo, J.~{\v{S}}pakula, R.~Willett, and N.~J. Wright,
  \emph{Uniform local amenability}, to appear in J. Noncommut. Geom., preprint,
  2012.

\bibitem[BNW07]{PaperBrodzkiNiblo}
J.~Brodzki, G.~A. Niblo, and N.~J. Wright, \emph{Property {A}, partial
  translation structures, and uniform embeddings in groups}, J. Lond. Math.
  Soc. (2) \textbf{76} (2007), no.~2, 479--497.

\bibitem[BO08]{OzawaBook}
N.~P. Brown and N.~Ozawa, \emph{{$C^*$}-algebras and finite-dimensional
  approximations}, Graduate Studies in Mathematics, vol.~88, American
  Mathematical Society, Providence, RI, 2008.

\bibitem[CE78]{ChoiEffrosNuclearCPAP}
M.~D. Choi and E.~G. Effros, \emph{Nuclear {$C\sp*$}-algebras and the
  approximation property}, Amer. J. Math. \textbf{100} (1978), no.~1, 61--79.

\bibitem[CFW81]{ConnesFeldmanWeiss}
A.~Connes, J.~Feldman, and B.~Weiss, \emph{An amenable equivalence relation is
  generated by a single transformation}, Ergodic Theory Dynamical Systems
  \textbf{1} (1981), no.~4.

\bibitem[EH85]{EffrosHaagerup}
E.~G. Effros and U.~Haagerup, \emph{Lifting problems and local reflexivity for
  {$C^\ast$}-algebras}, Duke Math. J. \textbf{52} (1985), no.~1, 103--128.

\bibitem[Kir77]{KirchbergNuclear}
E.~Kirchberg, \emph{{$C\sp*$}-nuclearity implies {CPAP}}, Math. Nachr.
  \textbf{76} (1977), 203--212.

\bibitem[Kir94a]{KirchbergExactUHF}
\bysame, \emph{Commutants of unitaries in {UHF} algebras and functorial
  properties of exactness}, J. Reine Angew. Math. \textbf{452} (1994), 39--77.

\bibitem[Kir94b]{KirchbergT}
\bysame, \emph{Discrete groups with {K}azhdan's property {${\rm T}$} and
  factorization property are residually finite}, Math. Ann. \textbf{299}
  (1994), no.~3, 551--563.

\bibitem[Mar73]{MargulisExpander}
G.~A. Margulis, \emph{Explicit constructions of expanders}, Problemy Pereda\v
  ci Informacii \textbf{9} (1973), no.~4, 71--80.

\bibitem[Ost09]{Ostrovskii}
M.~I. Ostrovskii, \emph{Expansion properties of metric spaces not admitting a
  coarse embedding into a {H}ilbert space}, C. R. Acad. Bulgare Sci.
  \textbf{62} (2009), no.~4, 415--420.

\bibitem[Pop85]{Popa:NotesOnCartan}
S.~Popa, \emph{Notes on {C}artan subalgebras in type {${\rm II}_1$} factors},
  Math. Scand. \textbf{57} (1985), no.~1, 171--188.

\bibitem[Ren80]{BookRenault}
J.~Renault, \emph{A groupoid approach to {$C^{\ast} $}-algebras}, Lecture Notes
  in Mathematics, vol. 793, Springer, Berlin, 1980.

\bibitem[Roe03]{RoeLectureNote}
J.~Roe, \emph{Lectures on coarse geometry}, University Lecture Series, vol.~31,
  American Mathematical Society, Providence, RI, 2003.

\bibitem[Sak12]{SakoONLP}
H.~Sako, \emph{Property {A} and the operator norm localization property for
  discrete metric spaces}, to appear in J. Reine Angew. Math., preprint, 2012.

\bibitem[STY02]{SkandalisTuYu}
G.~Skandalis, J.-L. Tu, and G.~Yu, \emph{The coarse {B}aum-{C}onnes conjecture
  and groupoids}, Topology \textbf{41} (2002), no.~4, 807--834.

\bibitem[Tes09]{TesseraPoincare}
R.~Tessera, \emph{Coarse embeddings into a {H}ilbert space, {H}aagerup property
  and {P}oincar\'e inequalities}, J. Topol. Anal. \textbf{1} (2009), no.~1,
  87--100.

\bibitem[Yu00]{Yu:CoarseHilbert}
G.~Yu, \emph{The coarse {B}aum-{C}onnes conjecture for spaces which admit a
  uniform embedding into {H}ilbert space}, Invent. Math. \textbf{139} (2000),
  no.~1, 201--240.

\end{thebibliography}

\end{document}